\numberwithin{equation}{section}
\newtheorem{theorem}{Theorem}[section]
\newtheorem{proposition}[theorem]{Proposition}
\newtheorem{lemma}[theorem]{Lemma}
\newtheorem{remark}[theorem]{Remark}
\newtheorem{corollary}[theorem]{Corollary}
\theoremstyle{definition}
\def\XXint#1#2#3{{\setbox0=\hbox{$#1{#2#3}{\int}$}
     \vcenter{\hbox{$#2#3$}}\kern-.5\wd0}}
\newcommand{\R}{\mathbb{R}}
\begin{document}
\title[Degree formula, the simple singular source]{
Degree counting for Toda system with simple singularity : one point blow up}
\author{Youngae Lee}
\address{Youngae ~Lee,~Center for Advanced Study in Theoretical Science,National Taiwan University, No.1, Sec. 4, Roosevelt Road, Taipei 106, Taiwan}
\email{youngaelee0531@gmail.com}
\author{ Chang-shou Lin}
\address{ Chang-shou ~Lin,~Taida Institute for Mathematical Sciences, Center for Advanced Study in
Theoretical Sciences, National Taiwan University, Taipei 106, Taiwan}
\email{cslin@math.ntu.edu.tw}
\author{Wen Yang}
\address{ Wen ~Yang,~Center for Advanced Study in Theoretical Sciences (CASTS), National Taiwan University, Taipei 10617, Taiwan}
\email{math.yangwen@gmail.com}
\author{Lei Zhang}
\address{ Lei ~Zhang,~Department of Mathematics, University of Florida, 358 Little Hall P.O.Box 118105, Gainesville FL 32611-8105}
\email{leizhang@ufl.edu}
\date{}
\begin{abstract}
In this paper, we study the degree counting formula of the rank two Toda system with simple singular source when $\rho_1\in(0,4\pi)\cup(4\pi,8\pi)$ and $\rho_2\notin 4\pi\mathbb{N}.$ The key step is to derive the degree formula of the shadow system, which arises from the bubbling solutions as $\rho_1$ tends to $4\pi$. In order to compute the topological degree of the shadow system, we need to find some suitable deformation. During this deformation, we shall deal with \textit{new} difficulty arising from the new phenomena: blow up does not necessarily imply concentration of mass. This phenomena occurs due to the  collapsing of singularities.  This is a continuation of the previous work \cite{llwy}.
\end{abstract}

\keywords{Toda system; topological degree; bubbling solutions; shadow system}
\maketitle

\section{Introduction}

\subsection{Shadow system}

Let $(M,g)$ be a compact Riemann surface with volume $1$  and $\Delta$ is the corresponding Laplace-Beltrami operator. In this paper, we are devoted to compute the  Leray-Schauder topological degree of the Toda system of rank $2$ (see $(1.13)$ or $(1.14)$ below).  Our strategy is to reduce this degree counting problem to a single equation, the so-called \textit{shadow} system of the corresponding Toda system:
\begin{align}
\label{1.1}
\left\{\begin{array}{l}
\Delta w+2\rho_2\left(\frac{h_2e^{w+4\pi K_{21} G(x,Q)}}{\int_Mh_2e^{w+4\pi K_{21} G(x,Q)}}-1\right)=0,\\
\nabla\big(\log h_1e^{\frac{K_{12}}{2}w}\big)\mid_{x=Q}=0,~\mathrm{and}~Q\notin S_1,
\end{array}\right.
\end{align}
where $i=1,2$, $h_i(x)=h_i^*(x)e^{-4\pi\sum_{p\in S_i}\alpha_{p,i}G(x,p)},$ $h_i^*>0$ in $M$ and $S_i$ is a finite set  in $M$.
Here $G(x,p)$ is the Green function  on $M$  satisfying
\begin{equation}\label{defofgreenfunction}
-\Delta G(x,p)=\delta_p-1~\mathrm{in}~M,~\ \ \mathrm{and}~\int_MG(x,p)=0.
\end{equation}

Throughout this paper, $\alpha_{p,i}$ is a \textit{positive} integer for $p\in S_i$, $i=1,2$.
Here $\textbf{K}=(K_{ij})_{2\times 2}$ is the Cartan matrix of rank $2$. See the subsection 1.2 below for the forms of $\textbf{K}$.
\medskip

To well-define the topological degree of \eqref{1.1}, we shall first prove compactness of the solution of  \eqref{1.1} in some function space. Here the function space is $\mathring{H}^1(M)\times [M\setminus S_1]$  (see \eqref{defofH1} for the definition of $\mathring{H}^1(M)$) and \eqref{1.1} is the zero set of the nonlinear map
 \begin{align*}
(w,Q)\overset{\Phi}{\longrightarrow} \left(
\Delta w+2\rho_2\left(\frac{h_2e^{w+4\pi K_{21} G(x,Q)}}{\int_Mh_2e^{w+4\pi K_{21} G(x,Q)}}-1\right),
\nabla(\log h_1e^{\frac{K_{12}}{2}w})(Q)\right).
\end{align*} Then we have the following compactness theorem.
\begin{theorem}\label{compactnessthm}   Suppose $\alpha_{p,i}\in\{1,2\}$ and $\rho_2\notin4\pi\mathbb{N}$, where $\mathbb{N}$ is the set of positive integers. Then there are constants $C>0$ and $\delta>0$ such that
for any solution $(w,Q)$ of \eqref{1.1},
\[\|w\|_{C^1(M)}\le C \  \textrm{and}\ \textrm{dist}(Q,S_1)\ge\delta>0.\]
\end{theorem}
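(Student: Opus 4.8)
The plan is to argue by contradiction: suppose there is a sequence of solutions $(w_k,Q_k)$ with $\|w_k\|_{C^1(M)}\to\infty$ or $\mathrm{dist}(Q_k,S_1)\to 0$. Since $M$ is compact we may pass to a subsequence with $Q_k\to Q_*\in M$. I first establish the a priori bound on $w$. Writing $V_k(x)=h_2(x)e^{4\pi K_{21}G(x,Q_k)}$ and normalizing $u_k=w_k+4\pi K_{21}G(\cdot,Q_k)-\log\int_M h_2e^{w_k+4\pi K_{21}G(\cdot,Q_k)}$, the first equation becomes the singular mean field equation $\Delta w_k+2\rho_2(h_2e^{u_k}-1)=0$ with $\int_M h_2e^{u_k}=1$ and total mass $2\rho_2$. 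Here $V_k$ is smooth and positive except for zeros of order $2\alpha_{p,2}$ at the points $p\in S_2$ and an extra zero of order $2$ at $Q_k$ coming from $K_{21}=-1$ in the Cartan matrix, so that $e^{4\pi K_{21}G(\cdot,Q_k)}\sim|x-Q_k|^2$ near $Q_k$.

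To the sequence $w_k$ I apply the Brezis--Merle concentration--compactness dichotomy. Either $w_k$ stays uniformly bounded in $L^\infty_{loc}$ off a finite blow-up set, in which case elliptic estimates upgrade this to the desired uniform $C^1$ bound, or blow-up occurs. In the blow-up case the mean field normalization (together with the zero-average constraint from $\mathring{H}^1(M)$) forces $w_k\to-\infty$ uniformly away from the finite blow-up set $\Sigma$, so the whole mass concentrates: $2\rho_2=\sum_{x_0\in\Sigma}m(x_0)$. Each local mass $m(x_0)$ is computed from the local vanishing order of $V_k$ by Liouville quantization: $m(x_0)=8\pi$ at a regular point, $8\pi(1+\alpha_{p,2})$ at an isolated $p\in S_2$, and $16\pi$ at $Q_*$ when $Q_*\notin S_2$. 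All of these lie in $8\pi\mathbb{N}$, so provided no singularities collapse we obtain $2\rho_2\in 8\pi\mathbb{N}$, i.e. $\rho_2\in 4\pi\mathbb{N}$, contradicting $\rho_2\notin 4\pi\mathbb{N}$.

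The remaining and genuinely new case is the collapsing configuration $Q_*=p\in S_2$, where as $d_k:=|Q_k-p|\to 0$ the weight behaves like $V_k(x)\approx c\,|x-p|^{2\alpha_{p,2}}|x-Q_k|^2$ and two singular sources merge. This is exactly where blow-up of $w_k$ need not produce concentration of mass, so the naive quantization is unavailable and one must resolve the local mass $m_{loc}(p)$ near $p$ by a multi-scale (bubbling--tree) analysis. I would run a selection argument to detect the bubbles near $p$, rescaling at the intrinsic scale $d_k$ (and at any finer scales that appear) to separate the contributions of $p$ and of $Q_k$; each rescaled limit is an entire solution of a Liouville equation carrying integer-order sources of orders among $\alpha_{p,2}$, $1$, or $\alpha_{p,2}+1$, whose total integral is quantized in $8\pi\mathbb{N}$ by the Pohozaev identity and the classification of finite-mass entire solutions. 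Accounting for all scales by a Pohozaev identity on the shrinking annuli $\{d_k\lesssim|x-p|\lesssim r\}$, I expect to show that $m_{loc}(p)$ is again a non-negative multiple of $8\pi$, including the degenerate ``blow-up without concentration'' possibility $m_{loc}(p)=0$. In every scenario $2\rho_2$ is forced into $8\pi\mathbb{N}$, contradicting $\rho_2\notin 4\pi\mathbb{N}$ and yielding $\|w_k\|_{C^1(M)}\le C$. This multi-scale bookkeeping in the collapsing regime is the main obstacle, precisely because the vanishing weight decouples blow-up of $w_k$ from concentration of mass.

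Finally, with the uniform bound $\|w\|_{C^1(M)}\le C$ in hand, I turn to the location of $Q$ via the second equation $\nabla(\log h_1+\tfrac{K_{12}}{2}w)(Q)=0$. Since $\log h_1=\log h_1^*-4\pi\sum_{p\in S_1}\alpha_{p,1}G(\cdot,p)$ with $\alpha_{p,1}\ge 1$, near any $p\in S_1$ one has $\nabla\log h_1(x)=\dfrac{2\alpha_{p,1}}{|x-p|^2}(x-p)+O(1)$, so $|\nabla\log h_1(x)|\to\infty$ as $x\to p$, whereas $\tfrac{|K_{12}|}{2}|\nabla w(Q)|\le\tfrac{|K_{12}|}{2}C$ is bounded. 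Hence the critical point identity cannot hold once $Q$ is too close to $S_1$, which produces a uniform $\delta>0$ with $\mathrm{dist}(Q,S_1)\ge\delta$ and completes the proof.
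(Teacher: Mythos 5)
Your outline matches the paper in its first and last steps: reducing the first equation to a singular mean field equation with the extra zero of order $-2K_{21}$ at $Q$, disposing of the non-collapsing case ($Q_k\to Q_*\notin S_2$) by Theorem A and quantization, and deducing $\mathrm{dist}(Q,S_1)\ge\delta$ from the balance condition once $\|w\|_{C^1}$ is bounded (since $\nabla\log h_1$ blows up near $S_1$ while $\nabla w$ stays bounded). The gap is in the collapsing case $Q_k\to Q_*=p\in S_2$, and it is twofold. First, your assertion in the second paragraph that blow-up forces $w_k\to-\infty$ away from the blow-up set, so that ``the whole mass concentrates,'' is precisely what fails when singular sources collapse; this is the ``blow-up without concentration'' phenomenon the paper is organized around, and you cannot invoke it to reduce everything to local masses. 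Second, and more seriously, your plan for the collapsing regime is to show by a multi-scale analysis that $m_{loc}(p)$ is a nonnegative multiple of $8\pi$ and then conclude $2\rho_2\in8\pi\mathbb{N}$. But in the non-concentrating scenario $\tilde w_k$ converges away from $p$ to a nontrivial limit $\tilde w_0$ which carries the remaining mass $2\rho_2-m_{loc}(p)$, and that remainder is not quantized; so quantization of the local mass alone produces no contradiction with $\rho_2\notin4\pi\mathbb{N}$. The paper does prove the local-mass quantization $\sigma(Q_0)\in2\mathbb{N}$ (Theorem 1.3, whose proof is itself an induction on the number of collapsing sources), but it is used only to obtain the restriction $\sigma(Q_0)<\alpha_{Q_0,2}-K_{21}+1$ via integrability of the limit.

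The step that actually closes the non-concentrating collapsing case in the paper is missing from your proposal: one rescales at the collapsing scale $\varepsilon_k=|Q_k-Q_0|$, shows the rescaled solution $v_k$ must blow up at finitely many points (Lemma 4.4), locates those bubbles relative to the two merging singularities $0$ and $e=\lim(Q_k-Q_0)/\varepsilon_k$ using the restriction on $\sigma(Q_0)$ together with a Pohozaev identity for the bubble positions (Lemma 3.1, giving $\sum_i K_{21}e/(q_i-e)>0$), computes $\varepsilon_k\nabla w_k(Q_k)$ from the Green representation of $v_k$, and then feeds this into the \emph{second} equation of the shadow system to obtain $0=\alpha_0-K_{12}\sum_i e/(e-q_i)>0$ (or $0=2\alpha_0-2(\alpha_{Q_0,2}+1)K_{12}\neq0$ when the bubble sits at $0$), a contradiction. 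In other words, the balance condition is not merely the device that keeps $Q$ away from $S_1$ at the end; it is the essential mechanism that rules out blow-up in the collapsing regime, where mass quantization is powerless. Without it, your argument leaves open exactly the case the theorem is hardest for.
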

Suppose that  any solution of $(w,Q)$ of \eqref{1.1} is non-degenerate.   Then the Morse index $M(w,Q)$ of $\Phi$ at $(w,Q)$ is the number of negative eigenvalues of the linearized equation at $(w,Q)$, and the topological degree is defined by \[\sum_{(w,Q)\ \textrm{is solution of}\ \eqref{1.1}}(-1)^{M(w,Q)}.\]
The  Leray-Schauder topological degree is well-defined for \eqref{1.1}, even though the non-degeneration of \eqref{1.1} is violated. We refer the readers to \cite{n}. Without the second equation of \eqref{1.1} (which will be referred as \textit{the balance condition} throughout this paper), the nonlinear PDE itself is called a mean field equation and the degree counting  formula has been proved in a series of papers by Chen and Lin (see \cite{cl1,cl2,cl3,cl4}). Let us briefly recall  Chen and Lin's degree counting formula: Consider the mean field equation,
\begin{equation}
\label{1.7}
\Delta u^*+\rho\left(\frac{h^*e^{u^*}}{\int_Mh^*e^{u^*}dv_g}-1\right)=4\pi\sum_{p\in S_0}\alpha_p(\delta_p-1),
\end{equation}where $\rho$ is {a positive} parameter, $\alpha_p>-1$ for every $p\in S_0$ and $S_0$ is a subset of $M.$\\
We set
$u^*(x)=u(x)-4\pi\sum_{p\in S_0}\alpha_{p}G(x,p).$ Then (\ref{1.7}) can be reduced to the equation without singular source,
\begin{equation}
\label{1.8}
\Delta u+\rho\Big(\frac{\overline{h}e^{u}}{\int_M\overline{h}e^{u}dv_g}-1\Big)=0,
\end{equation}
where $\overline{h}(x)=h^*(x)e^{-\sum_{p\in S_0}4\pi\alpha_pG(x,p)}\geq0$ in $M$ and $\overline{h}=0$ if and only if $x\in S_0$. Note that (\ref{1.8}) is
invariant by adding a constant to the solutions. Therefore, we can always consider the equation (\ref{1.8}) in the following function space:
\begin{align}\label{defofH1}
\mathring{H}^1(M)=\Big\{u\in H^1(M)\ \Big|\ \int_Mudv_g=0\Big\}.
\end{align}
For equation (\ref{1.8}), we introduce the set of critical parameters
\begin{align*}
\Sigma:&=\big\{8N\pi+\Sigma_{p\in A}8\pi(1+\alpha_{p})\mid N\in\mathbb{N}\cup\{0\},\ A\subseteq S_0\big\}\setminus\{0\}\\
&=\{8\pi \mathfrak{a}_{k}\mid \mathfrak{a}_{1}\le \mathfrak{a}_{2}\le \cdots\}.
\end{align*}
Through a series of work by Brezis-Merle \cite{bm}, Li-Shafrir \cite{ls} and Bartoluci-Tarantello \cite{bt1},  a priori bound  for the solutions to \eqref{1.8} was established:

\bigskip

\noindent {\bf{Theorem A}.} (\cite{bt1,bm,ls}) Let $\rho\notin\Sigma$, then all the solutions of (\ref{1.8}) are uniformly bounded.

\bigskip

Let \[T_{\rho}u=\rho\Delta^{-1}\left(\frac{\overline{h}e^u}{\int_M\overline{h}e^udv_g}-1\right).\] By Theorem A, the Leray-Schauder degree
$$d_{\rho}:\mathrm{deg}(I+T_{\rho},B_R,0)$$
is well defined for $\rho\notin\Sigma,$ where $B_R=\{u\in\mathring{H}^1(M)\mid\|u\|_{H^1(M)}\leq  R\}$. Since $d_{\rho}$ is a homotopic invariant, $d_{\rho}$ is a constant for $\rho\in(8\mathfrak{a}_j\pi,8\mathfrak{a}_{j+1}\pi)$ (for convenience, we set $\mathfrak{a}_0=0$), which is denoted by $d_j,j=0,1,\cdots,$ obviously $d_0=1$.
To represent $d_j$, it is better to introduce the generating function
$$g^{(1)}(x)=\sum_{j=0}^{\infty}d_jx^j.$$

In \cite{l}, Li pointed out that the degree formula should depend only on the topology of $M$ for the case without singularities. In \cite{cl2,cl4}, Chen and Lin obtained the degree counting formula  {for general cases as }stated below.

\vspace{0.5cm}
\noindent {\bf{Theorem B}}.   {\em Let $d_{j}$ be the Leray-Schauder degree for (\ref{1.7}) with $\rho\in(8\mathfrak{a}_j\pi,8\mathfrak{a}_{j+1}\pi)$. Then the generating function $g^{(1)}(x)$ is determined by,
\begin{align*}
g^{(1)}(x)=(1-x)^{\chi(M)-|S_0|-1}\prod_{p\in S_0}(1-x^{1+\alpha_p}),
\end{align*}
where $\chi(M)$ is the Euler characteristic of $M$. Consequently if $\chi(M)\leq 0$ and $\alpha_p\in\mathbb{N}$ for any $p\in S_0$, then
\begin{equation}
\label{1.9}
g^{(1)}(x)=(1+x+\cdots)^{1-\chi(M)}\prod_{p\in S_0}(1+x+\cdots+x^{\alpha_p}),
\end{equation}
and $d_j>0$ for $j\geq0.$}

\vspace{0.5cm}

Once the a priori bound is established by Theorem \ref{compactnessthm}, we can define the Leray-Schauder degree for (\ref{1.1}) when $\rho_2\in(4j\pi,4(j+1)\pi)$.  We denote it by $d_j^S$.
Our first main result is to obtain the generating function for $d_j^S$:
\begin{theorem}
\label{th1.2}
Let $d_j^S$ be the Leray-Schauder degree for (\ref{1.1}) when $\rho_2\in(4j\pi,4(j+1)\pi)$. Suppose $\alpha_{p,1},\alpha_{p,2}\in\{1,2\}.$ Then the generating function
$$g_s(x)=\sum_{j=0}^\infty d_j^Sx^j$$
is determined by
\begin{align}
\label{1.15}
g_s(x)=~&(1-x)^{\chi(M)-1}\Big[\left(\chi(M)-|S_2\cup S_1|\right)(1+\cdots+x^{-K_{21}})\prod_{p\in S_2}(1+\cdots+x^{\alpha_{p,2}})\nonumber\\
&+\sum_{p\in S_2\setminus S_1}(1+x+\cdots+x^{\alpha_{p,2}-K_{21}})\prod_{q\in S_2\setminus\{p\}}(1+x+\cdots+x^{\alpha_{q,2}})\Big].
\end{align}
\end{theorem}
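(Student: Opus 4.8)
The plan is to exploit the quasi-decoupled structure of \eqref{1.1}. For a fixed admissible location $Q$, the first equation is a singular mean field equation for $w$ with total parameter $2\rho_2$ and singular sources at $S_2$ (with coefficients $\alpha_{p,2}$) together with one extra source at $Q$ of coefficient $-K_{21}$, since near $Q$ one has $e^{4\pi K_{21}G(x,Q)}\sim|x-Q|^{-2K_{21}}$ and $K_{21}<0$. The second equation is the balance condition, which forces $Q$ to be a critical point of $F:=\log h_1+\tfrac{K_{12}}{2}w$ on $M\setminus S_1$. Because $2\rho_2\in(8j\pi,8(j+1)\pi)$ exactly when $\rho_2\in(4j\pi,4(j+1)\pi)$, Theorem~B supplies, for each frozen $Q$, the generating function of the mean field degree attached to the corresponding singular data. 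The overall degree $d_j^S$ should then factor, after a suitable deformation, into the product of this mean field degree and a Poincar\'e--Hopf count of the zeros of the vector field $Q\mapsto\nabla F(Q)$. First I would make this factorization rigorous via a two-step Leray--Schauder homotopy (solve the mean field equation for $w$ given $Q$, then impose the balance condition), justified by Theorem~\ref{compactnessthm}, which guarantees that no solution escapes to the boundary of $\mathring{H}^1(M)\times[M\setminus S_1]$ during the deformation and that $\mathrm{dist}(Q,S_1)\ge\delta$ throughout.

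The heart of the computation is to organize the solution set according to the position of $Q$. I would split $M\setminus S_1$ into a \emph{generic} region, where $Q$ stays a definite distance from $S_2$, and \emph{collapsing} regions, small neighborhoods of the points $p\in S_2\setminus S_1$ (recall $Q\notin S_1$). In the generic region $w$ is smooth and $F$ has only logarithmic sinks at $S_1$, where $F\sim 2\alpha_{p,1}\log|x-p|$ so that $\nabla F$ points radially outward from each $p\in S_1$ with local index $+1$; a Poincar\'e--Hopf count on $M\setminus S_1$ then gives total index $\chi(M)-|S_1|$ for the zeros of $\nabla F$. For each such zero the frozen mean field degree is $(1-x)^{\chi(M)-1}(1+\cdots+x^{-K_{21}})\prod_{p\in S_2}(1+\cdots+x^{\alpha_{p,2}})$, independent of the exact location of $Q$ by homotopy invariance. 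After removing the zeros that migrate into the collapsing regions, the generic prefactor becomes $\chi(M)-|S_1\cup S_2|$, which reproduces the first bracketed term of \eqref{1.15}.

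For the collapsing regions I would analyze, for each $p\in S_2\setminus S_1$, the regime $Q\to p$, in which the source at $Q$ of coefficient $-K_{21}$ merges with the source at $p$ of coefficient $\alpha_{p,2}$ into a single singularity of coefficient $\alpha_{p,2}-K_{21}$. Here Theorem~B yields the mean field degree $(1-x)^{\chi(M)-1}(1+\cdots+x^{\alpha_{p,2}-K_{21}})\prod_{q\in S_2\setminus\{p\}}(1+\cdots+x^{\alpha_{q,2}})$, while a local degree computation near $p$ should contribute a single unit of index. Summing over $p\in S_2\setminus S_1$ produces the second bracketed term of \eqref{1.15}, and adding the two contributions gives the claimed $g_s(x)$. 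The bookkeeping with the generating functions, together with the identity $|S_1|+|S_2\setminus S_1|=|S_1\cup S_2|$, then assembles \eqref{1.15} mechanically.

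The main obstacle will be the collapsing regime, which is precisely the new phenomenon flagged in the abstract: as $Q\to p\in S_2$ the mean field solution may blow up at $p$ while the associated mass fails to concentrate, so neither the uniform a priori estimates nor the clean separation between the generic and collapsing regions is automatic. I expect the genuinely hard points to be (i) establishing refined estimates that confine exactly one zero of $\nabla F$ to each collapsing neighborhood and keep the two regimes disjoint throughout the homotopy, and (ii) computing the local degree near each $p$ with the merged coefficient $\alpha_{p,2}-K_{21}$, correctly accounting for the non-concentration of mass so that the local index is indeed a single unit and the merged singularity strength is the correct one. These two steps, rather than the generating-function algebra, are where the real work lies, and they are the points at which the methods of \cite{llwy} must be extended.
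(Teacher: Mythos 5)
Your overall architecture (decouple, apply Theorem~B for the frozen mean field equation, Poincar\'e--Hopf for the balance condition, plus a separate contribution with merged singularity strength $\alpha_{p,2}-K_{21}$ for each $p\in S_2\setminus S_1$) predicts the right shape of \eqref{1.15}, but the mechanism you propose for producing the second bracketed term is not the correct one, and the homotopy you propose is exactly the one the paper explains cannot work. Your ``two-step'' deformation is essentially $(1.8)_t$ from the introduction: scale $w$ out of the balance condition. Along that deformation nothing in the balance condition sees $S_2\setminus S_1$, and solution branches can have $Q_t\to Q_0\in S_2\setminus S_1$ with $w_t$ bounded (blow up of $w_t$ is excluded by Theorem~\ref{thmeven}, but boundedness of $w_t$ with collapsing $Q_t$ is not), so the homotopy is not admissible on $\mathring{H}^1(M)\times[M\setminus(S_1\cup S_2)]$. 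The paper's fix is to insert the extra term $-4\pi(1-t)\sum_{q\in S_2\setminus S_1}G(x,q)$ into the balance condition (system $(1.10)_t$); this is what makes the $t=0$ count equal to $\chi(M)-|S_1\cup S_2|$ directly (artificial sinks at $S_2\setminus S_1$), rather than your ``$\chi(M)-|S_1|$ minus migrating zeros,'' which is unsupported: the points of $S_2\setminus S_1$ are not singularities of $F=\log h_1+\tfrac{K_{12}}{2}w$, so there is no reason a zero of $\nabla F$ sits near each of them.

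More seriously, your picture of the solution set of \eqref{1.1} is wrong at the collapsing points, and this is where the sign and the count of the second term live. Under the genericity conditions the paper imposes ((C1): nondegeneracy of the solutions $w_Q$ of the frozen equation at $Q\in S_2\setminus S_1$; (C2): $\nabla(\log h_1+\tfrac12 K_{12}w_Q)|_{x=Q}\neq 0$), the system \eqref{1.1} has \emph{no} solutions with $Q$ in a small neighborhood of $S_2\setminus S_1$, so there is no ``local index $+1$ contributed by actual solutions near $p$.'' The second term of \eqref{1.15} instead arises as degree \emph{lost along the homotopy}: as $t\to1^-$, families $(w_t,Q_t)\in\Gamma_{w_Q,t}$ of solutions of $(1.10)_t$ collapse into $S_2\setminus S_1$ at the precise rate $|Q_t-Q|\sim 2(1-t)/\lambda_{w_Q}$ (Lemmas~\ref{le4.2}--\ref{le4.3}), and the local degree of the balance part of such a family, computed from the Jacobian of $z\mapsto \lambda_{w_Q}e_{w_Q}+2(1-t)\frac{z-Q}{|z-Q|^2}$, is $-1$ (trace zero, negative determinant). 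Hence each family carries degree $-d_j(w_Q,Q)$, and subtracting the escaped degree from $d_{s,j}(0)$ produces the $+$ sign in front of the merged-singularity generating functions. Your heuristic assigns $+1$ for the wrong reason and to the wrong objects; without the quantitative collapsing analysis (the sets $\Gamma_{w_Q,t}$, the rate $|Q_t-Q|\asymp 1-t$, and the explicit $-1$ Jacobian computation) the bookkeeping cannot be closed, and Theorem~B cannot simply be ``applied with merged data'' since that is precisely the collapsing-singularity regime it does not cover.
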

\medskip

\noindent We note that all the coefficients of the following polynomial is nonnegative:
\begin{align*}
(1+\cdots+x^{-K_{21}})(1+\cdots+x^{\alpha_{p,2}})-(1+x+\cdots+x^{\alpha_{p,2}-K_{21}}).
\end{align*}
As a consequence, if   either $\chi(M)<0$ or $\chi(M)=0$, $S_1\cup S_2\neq\emptyset$, we can show all the coefficients of $g_s(x)$ are negative. Thus, we have the following corollary.

\begin{corollary}
\label{cr1.1}
Suppose the assumptions of Theorem \ref{th1.2} hold. If either $\chi(M)<0$ or $\chi(M)=0$, $S_1\cup S_2\neq\emptyset$,  then the shadow system (\ref{1.1}) has a solution for $\rho_2\notin4\pi\mathbb{N}.$
\end{corollary}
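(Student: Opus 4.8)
The plan is to read the sign of each Leray--Schauder degree $d_j^S$ directly off the explicit generating function \eqref{1.15} furnished by Theorem \ref{th1.2}, and then to invoke the standard principle that a nonvanishing degree forces a zero of the associated map. Since Theorem \ref{compactnessthm} already provides the a priori bound that makes $\mathrm{deg}(I+T,B_R,0)=d_j^S$ well defined, it suffices to prove that under either hypothesis every coefficient of $g_s(x)$ is nonzero; in fact I would show they are all strictly negative. The key structural point is that $\chi(M)\le 0$ in both cases, so that $(1-x)^{\chi(M)-1}=(1-x)^{-(1-\chi(M))}$ expands as a power series with strictly positive coefficients.

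First I would simplify the bracket in \eqref{1.15}. Write $P_p(x)=1+x+\cdots+x^{\alpha_{p,2}}$ and $Q(x)=1+\cdots+x^{-K_{21}}$. The remark preceding the statement asserts that $R_p(x):=Q(x)P_p(x)-(1+x+\cdots+x^{\alpha_{p,2}-K_{21}})$ has nonnegative coefficients, and one checks directly that $R_p$ has vanishing constant term. Substituting $1+\cdots+x^{\alpha_{p,2}-K_{21}}=Q P_p-R_p$ in the sum over $p\in S_2\setminus S_1$ and using the elementary identity $|S_1\cup S_2|-|S_2\setminus S_1|=|S_1|$, the bracket collapses to
\begin{align*}
(\chi(M)-|S_1|)\,Q(x)\prod_{p\in S_2}P_p(x)-\sum_{p\in S_2\setminus S_1}R_p(x)\prod_{q\in S_2\setminus\{p\}}P_q(x).
\end{align*}
Here $Q\prod_{p}P_p$ has strictly positive coefficients throughout its support, with constant term $1$, while each $R_p\prod_{q\ne p}P_q$ has nonnegative coefficients and vanishing constant term.

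If $\chi(M)<0$, or if $\chi(M)=0$ and $S_1\ne\emptyset$, then $\chi(M)-|S_1|<0$, so the displayed bracket is a polynomial all of whose coefficients are $\le 0$ and whose constant term equals $\chi(M)-|S_1|<0$. Forming the Cauchy product with the strictly positive coefficients of $(1-x)^{\chi(M)-1}$ then makes every coefficient of $g_s(x)$ strictly negative, since the contribution coming from the negative constant term of the bracket is already negative while every remaining contribution is nonpositive. Hence $d_j^S<0$ for all $j\ge 0$, and \eqref{1.1} has a solution whenever $\rho_2\in(4j\pi,4(j+1)\pi)$.

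The delicate point, which I expect to be the \emph{main obstacle}, is the borderline case $\chi(M)=0$ with $S_1=\emptyset$ (so $S_2\ne\emptyset$ by hypothesis). Then $\chi(M)-|S_1|=0$, the leading negative contribution disappears, and the bracket has vanishing constant term; a direct expansion gives $-|S_2|<0$ for its coefficient of $x$. Thus the bracket is a nonzero polynomial with nonpositive coefficients whose lowest nonzero coefficient occurs in degree $1$, and convolving with $(1-x)^{-1}$ yields $d_0^S=0$ while $d_j^S<0$ for every $j\ge 1$. The degree argument therefore already delivers existence for all $\rho_2\in(4j\pi,4(j+1)\pi)$ with $j\ge 1$; the remaining subcritical range $\rho_2\in(0,4\pi)$, where $2\rho_2<8\pi$, must be treated by hand, solving the first equation of \eqref{1.1} by minimization for each fixed $Q$ and then producing $Q$ from the balance condition via a critical point argument on the compact surface $M\setminus S_1$ (using that $\log h_1\to-\infty$ near $S_1$). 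Assembling the two inputs proves existence for every $\rho_2\notin 4\pi\mathbb{N}$, as asserted.
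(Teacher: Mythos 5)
Your expansion of the bracket in \eqref{1.15} is correct and is essentially the computation the paper intends: the text preceding Corollary \ref{cr1.1} simply asserts that all coefficients of $g_s(x)$ are negative and deduces existence from $d_j^S\neq 0$ together with Theorem \ref{compactnessthm}. Your identity expressing the bracket as $(\chi(M)-|S_1|)\,Q\prod_{p}P_p-\sum_{p\in S_2\setminus S_1}R_p\prod_{q\neq p}P_q$ is right, and in particular $d_0^S=\chi(M)-|S_1|$; so in the cases $\chi(M)<0$, or $\chi(M)=0$ with $S_1\neq\emptyset$, your argument is complete and coincides with the paper's.

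The gap is exactly where you located it, but your repair does not close it. When $\chi(M)=0$, $S_1=\emptyset$, $S_2\neq\emptyset$ you correctly find $d_0^S=0$ (for instance $M$ a torus, $S_2=\{p\}$, $\alpha_{p,2}=1$, $\mathbf{K}=\mathbf{A}_2$ gives $g_s(x)=-x/(1-x)$), so degree theory says nothing for $\rho_2\in(0,4\pi)$; note that this also shows the paper's own assertion that ``all the coefficients of $g_s(x)$ are negative'' is too strong in this subcase. Your proposed substitute --- minimize the first equation of \eqref{1.1} for each fixed $Q$ (legitimate, since $2\rho_2<8\pi$ and all exponents in the weight $h_2e^{4\pi K_{21}G(x,Q)}$ are positive) and then ``produce $Q$ from the balance condition via a critical point argument'' --- is not a proof: the balance condition $\nabla\big(\log h_1+\tfrac{K_{12}}{2}w_Q\big)\big|_{x=Q}=0$ is not the Euler--Lagrange equation of any function of $Q$ that you exhibit (the reduced energy $Q\mapsto\inf_wJ_Q(w)$ has $Q$-gradient given by $\nabla_QG(x,Q)$ integrated against the measure $h_2e^{w_Q+4\pi K_{21}G(\cdot,Q)}$, not by $\nabla_xw_Q(Q)$), and your parenthetical appeal to $\log h_1\to-\infty$ near $S_1$ is vacuous precisely here, where $S_1=\emptyset$ and $h_1=h_1^*>0$ is smooth. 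So existence for $\rho_2\in(0,4\pi)$ in the subcase $\chi(M)=0$, $S_1=\emptyset$ remains unproved in your write-up; to be fair, it is not addressed by the paper's one-line justification either.
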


When $S_i\neq\emptyset, i=1,2,$ it is rather nontrivial to calculate the topological degree of the shadow system (\ref{1.1}). Let us briefly discuss our approach for the calculation.

If (\ref{1.1}) has no singularity, i.e., $S_1\cup S_2=\emptyset,$ then both $h_1,h_2$ are positive smooth functions on $M$. We consider $(w,Q)$ are defined in $\mathring{H}^1(M)\times M$ and $(w,Q)$ is the zero of the nonlinear map
\begin{align*}
(w,Q)\overset{\Phi}{\longrightarrow} \left(
\Delta w+2\rho_2\left(\frac{h_2e^{w+4\pi K_{21} G(x,Q)}}{\int_Mh_2e^{w+4\pi K_{21} G(x,Q)}}-1\right),
\nabla(\log h_1e^{\frac{K_{12}}{2}w})(Q)\right).
\end{align*}
It is easy to see that the compactness of $\Phi^{-1}(0)$ is equivalent to the a priori estimate of $\|w\|_{C^1(M)}\leq C$ for any solution $(w,Q)\in\Phi^{-1}(0)$. In order to compute the topological degree, we introduce the deformation $\Phi_t$ of $\Phi$
\begin{align*}
~\quad\quad\quad\quad\quad\quad\quad\left\{\begin{array}{l}
\Delta  {w_t}+2\rho_2\left(\frac{h_2e^{ {w_t}+4\pi K_{21} G(x, {Q_t})}}{\int_Mh_2e^{ {w_t}+4\pi K_{21} G(x, {Q_t})}}-1\right)=0,\\
\nabla\big(\log h_1e^{\frac{t}{2}K_{12} {w_t}}\big)\mid_{x= {Q_t}}=0.
\end{array}\right.~\quad\quad\quad\quad\quad(1.8)_t
\end{align*}
Since $\rho_2\notin4\pi\mathbb{N}$ and $S_1\cup S_2=\emptyset,$ the compactness of $(1.8)_t$ for $t\in[0,1]$ even holds without the balance condition at $Q_t$, this is a simple consequence of Theorem A. Thus, the degree of the shadow system (\ref{1.1}) with $S_1\cup S_2=\emptyset$ is equal to the degree of the system $(1.8)_0$ which is a de-coupled system, and the degree for $(1.8)_0$ follows from Theorem B.
\medskip

However, when $S_1\cup S_2\neq\emptyset$, it becomes much harder. There are two cases for (\ref{1.1}): one is $Q\in S_2\setminus S_1$ and the other is $Q\notin S_2\cup S_1.$ For the first case, the degree of the system can be calculated as before. But for the later, the domain of $\Phi$ is $\mathring{H}^1(M)\times[M\setminus\{S_1\cup S_2\}].$ We note  that there is no information for  $S_2\setminus S_1$ in the balance condition.  The problem is that there might be a sequence of  solutions $(w_k,Q_k)$ of \eqref{1.1} such that $Q_k\notin S_1\cup S_2$ and $Q_k\to Q_0\in S_2$. This is the phenomena of collapsing singularities. There are two cases to be discussed:
\begin{itemize}
\item[(i)] $w_k$ blows up, or
\item[(ii)] $w_k$ does not blow up.
\end{itemize}

For the case (i), we consider  a general class of the mean field equation with collapsing singularities:
 \begin{equation*}
\quad\quad\quad\quad\quad\quad\ \Delta \hat{u}_k+2\rho_2    \hat{h} e^{\hat{u}_k}  =4\pi\sum_{p_{k_j}\in\hat{S}_k}\beta_{j} \delta_{p_{k_j}}\ \textrm{in}\ B_1(0),\quad\quad\quad\quad\quad\quad\quad\quad(1.9)
\end{equation*}  where $ \hat{h}>0$,  $|\hat{S}_k|$ is independent of $k$,    $\lim_{k\to+\infty}p_{k_j}=0$ for all $p_{k_j}\in \hat{S}_k$,  $p_{k_i}\neq p_{k_j}$ if $i\neq j$,  and  $\beta_{j}\in\mathbb{N}$.
To the best of our knowledge, there have been no  available estimates for $(1.9)$. There might be a new phenomena  such that blow up does not necessarily imply concentration of mass. We refer the readers to \cite{lt}.
Our second main result is to show that  the local mass is an even positive integer, despite the existence of collapsing singularities.
\begin{theorem}\label{thmeven}Let $\hat{u}_k$ be a solution of $(1.9)$.  We assume that  $0$   is the only blow up point, $\hat{u}_k$ has the bounded oscillation on $\partial B_1(0)$ and finite mass (see also \eqref{assumption}).  Then the local mass $\sigma_0$ satisfies
\[\sigma_0:=\lim_{\delta\to0}\lim_{k\to+\infty}\frac{1}{2\pi} \int_{B_{\delta}(0)}\rho_2\hat{h} e^{\hat{u}_k}\in2\mathbb{N}.\]
\end{theorem}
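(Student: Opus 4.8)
The plan is to strip off the Dirac sources and reduce $(1.9)$ to a regular Liouville equation carrying a degenerate weight, and then to run a scale-by-scale blow-up analysis whose elementary pieces all have even mass. Setting $\gamma_k(x)=\sum_{j}2\beta_{j}\log|x-p_{k_j}|$, so that $\Delta\gamma_k=4\pi\sum_j\beta_j\delta_{p_{k_j}}$, and $v_k:=\hat u_k-\gamma_k$, equation $(1.9)$ becomes
\begin{equation*}
\Delta v_k+2\rho_2 H_k(x)e^{v_k}=0,\qquad H_k(x):=\hat h(x)\prod_{j}|x-p_{k_j}|^{2\beta_j}\ge0,
\end{equation*}
while $\sigma_0=\frac{1}{2\pi}\lim_{\delta\to0}\lim_{k\to\infty}\int_{B_\delta(0)}\rho_2 H_k e^{v_k}$ is unchanged because $\hat h e^{\hat u_k}=H_k e^{v_k}$. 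Since $p_{k_j}\to0$ and $\beta_j\in\mathbb{N}$, away from the origin $H_k\to\hat h\,|x|^{2\alpha}$ with $\alpha:=\sum_j\beta_j\in\mathbb{N}$, so the collapsing cluster should behave like a single singular source of integer order $\alpha$. The even quantization of $\sigma_0$ should, in the end, reflect the fact that every elementary bubble contributes local mass $2$ (at a regular point) or $2(1+m)$ with $m\in\mathbb{N}$ (where an integer amount of singular order is captured), both even.

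First I would fix the coarse concentration picture. Using the hypothesis that $0$ is the only blow-up point, the bounded oscillation of $\hat u_k$ on $\partial B_1(0)$, and the finiteness of the mass, together with the Brezis--Merle alternative \cite{bm} and Li--Shafrir type $\sup+\inf$ estimates \cite{ls,bt1} adapted to the nonnegative, uniformly bounded weight $H_k$ (which degenerates, vanishing only on the collapsing set $\hat S_k$), I would show that along a subsequence $\rho_2 H_k e^{v_k}\,dx$ converges to a finite measure whose singular part localizes at $0$. This produces a finite list of concentration scales at the origin, on which the bubble analysis will be built.

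The core of the proof is a bubble-tree analysis centered at $0$, in which one must track the mutual separation scales of the points $p_{k_j}$ relative to each local concentration scale. At a scale much coarser than the diameter of a given collapsing cluster, the whole cluster is seen as one singular source whose order is the integer $\sum\beta_j$ over the enclosed points, and the suitably rescaled limit solves a (singular or regular) Liouville equation on $\mathbb{R}^2$; a concentration occurring away from all $p_{k_j}$ gives a regular bubble. In each case the limit is an entire solution of known even total mass ($2$, or $2(1+m)$ with $m$ the enclosed integer order). To propagate mass between consecutive scales without loss one uses the Pohozaev identity for the regularized equation: on $B_r(0)$,
\begin{equation*}
r\int_{\partial B_r}\Big((\partial_\nu v_k)^2-\tfrac12|\nabla v_k|^2\Big)+r\int_{\partial B_r}2\rho_2 H_k e^{v_k}=2\int_{B_r}2\rho_2 H_k e^{v_k}+\int_{B_r}(x\cdot\nabla\log H_k)\,2\rho_2 H_k e^{v_k},
\end{equation*}
where $x\cdot\nabla\log H_k=x\cdot\nabla\log\hat h+\sum_j 2\beta_j\,\frac{x\cdot(x-p_{k_j})}{|x-p_{k_j}|^2}$. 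The term $x\cdot\nabla\log\hat h$ is $O(r)$ and negligible as $r\to0$, whereas the singular sum carries all the information about the collapsing and is what must be identified with (twice) the enclosed integer order times the enclosed mass in order to close the quantization $\sigma_0\in2\mathbb{N}$.

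The principal obstacle is exactly the evaluation of that cross term, and it is here that the phenomenon announced in the introduction intervenes: because the singularities collapse, $\hat u_k$ may blow up in regions that carry almost no mass, so the implication ``blow up $\Rightarrow$ concentration'' fails and $\int_{B_r}(x\cdot\nabla\log H_k)\rho_2 H_k e^{v_k}$ cannot be read off from a single limiting profile (cf.\ \cite{lt}). I expect the decisive work to be a fine analysis of the neck regions interpolating between consecutive blow-up scales: one must show that the mass deposited in each neck is $o(1)$, and that on each neck the factors $\frac{x\cdot(x-p_{k_j})}{|x-p_{k_j}|^2}$ average against $\rho_2 H_k e^{v_k}$ to the number of enclosed singularities, forcing the contribution of the singular sum to be an even integer multiple of a mass. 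Quantifying how fast the $p_{k_j}$ coalesce relative to each scale is the technical heart. Once the neck estimates are in place, summing the even contributions attached to every leaf of the bubble tree yields $\sigma_0\in2\mathbb{N}$.
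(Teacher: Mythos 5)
Your reduction $v_k=\hat u_k-\gamma_k$ and your identification of the Pohozaev cross term as the crux match the paper's starting point, but the plan as written contains a step that fails and leaves the actual mechanism of the proof unsupplied. The failing step is the assertion that ``the mass deposited in each neck is $o(1)$.'' In the collapsing setting this is exactly what can go wrong: this is the ``blow up does not imply concentration'' phenomenon the introduction warns about. In the paper's proof, after rescaling by the maximal mutual distance $\varepsilon_k$ of the $p_{k_j}$, the rescaled function $\hat v_k$ may tend to $-\infty$ uniformly on compact sets, so the inner mass $m_0$ is $0$ while $\sigma_0=2+2\sum_j\beta_j>0$; all of the mass then sits in the annular region between the scales $\varepsilon_k R$ and $r$. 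The paper does not show the neck mass vanishes; instead it runs the Pohozaev identity on the annulus $B_{r/\varepsilon_k}(0)\setminus B_R(0)$, controls $\nabla\hat w_k$ on $\partial B_r$ and $\nabla\hat v_k$ on $\partial B_R$ (Lemma \ref{gradientlemma}), and obtains the quadratic relation $\pi[(2\sigma_0)^2-(2m_0)^2]=(2+2\sum_j\beta_j)4\pi[\sigma_0-m_0]$, whose two roots $\sigma_0=m_0$ and $\sigma_0=2+2\sum_j\beta_j-m_0$ both preserve evenness once $m_0\in2\mathbb{N}\cup\{0\}$. Your bubble-tree scheme, which requires summing only leaf contributions, cannot account for this intermediate-region mass.

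The second gap is that your claim that every elementary bubble has mass $2$ or $2(1+m)$ is not enough at the cluster scale. After rescaling by $\varepsilon_k$ the limit can be an entire solution of $\Delta u+e^u=\sum_i 4\pi\alpha_i\delta_{p_i}$ with \emph{several distinct} integer singularities; its total mass is $4\pi(\sum_i\alpha_i+\alpha/2)$ with $\alpha>2$, and the fact that this lies in $8\pi\mathbb{N}$ is a nontrivial classification result (\cite[Theorem 2.1]{lwz}, quoted as Theorem \ref{th3.3}), not a consequence of the single-singularity case. Finally, you acknowledge but do not resolve how to organize the multiple coalescence scales; the paper closes this by induction on $|\hat S_k|$: rescaling by the maximal separation forces at least two of the $z_{k,j}$ to stay apart, so any residual collapsing cluster has strictly fewer points and the induction hypothesis applies to the local masses of $\hat v_k$. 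Without these three ingredients the proposal does not yield $\sigma_0\in2\mathbb{N}$.
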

Even though the blow up case with collapsing singularities  can be excluded by Theorem \ref{thmeven}, we still could not  prove the compactness of the solutions of $\Phi$ by just showing $w$ is uniformly bounded. Indeed, it is possible to get a sequence of solutions $(w_k,Q_k)$ to (\ref{1.1}) such that $Q_k\rightarrow Q_0\in S_2\setminus S_1$ and $w_k$ is uniformly bounded, that is, the case (ii). This is the reason why we could not use the deformation $(1.8)_t$ to calculate the degree for the case $S_1\cup S_2\neq\emptyset$. Instead, we introduce a new term which contains the information of $S_2\setminus S_1$ in the deformation. We set
\begin{align*}
~\quad\quad\quad\left\{\begin{array}{l}
\Delta  {w_t}+2\rho_2\left(\frac{h_2e^{ {w_t}+4\pi K_{21} G(x, {Q_t})}}{\int_Mh_2e^{ {w_t}+4\pi K_{21} G(x, {Q_t})}}-1\right)=0,\\
\nabla\left(\log h_1e^{\frac{t}{2}K_{12} {w_t}}-4\pi(1-t)\sum_{ {q}\in S_2\setminus S_1}G(x, {q})\right)\mid_{x= {Q_t}}=0.
\end{array}\right.~\quad\quad(1.10)_t
\end{align*}
Obviously, the corresponding function space is $\mathring{H}^1(M)\times [M\setminus(S_1\cup S_2)]$ for any $t\in[0,1)$.
However,  on one hand, we note that when $t\rightarrow1^-,$  the system $(1.10)_t$ does not converge to the original system (\ref{1.1}) as $t\rightarrow1^-,$
because
\begin{align*}
~\ \quad\quad\quad\quad\quad\quad\quad\lim_{t\rightarrow1^-} {\sum_{q\in S_2\setminus S_1}4\pi(1-t)\nabla G(Q_t,q)}\neq0,
\quad\quad\quad\quad\quad\quad\quad\quad\quad(1.11)
\end{align*}
if $Q_t$ is collapsing with some element $Q_0\in S_2\setminus S_1$, and $w_t$ converges as $t\rightarrow1^-$. So, we have to find out what is the difference between (\ref{1.1}) and $(1.10)_t$ when $|1-t|\ll1.$ On the other hand, when $t=0$, system $(1.10)_t$ becomes the following decoupled system.
\begin{align*}
~\quad\quad\quad\quad\quad\quad\quad
\left\{\begin{array}{l}
\Delta w+2\rho_2\left(\frac{h_2e^{w+4\pi K_{21} G(x,Q)}}{\int_Mh_2e^{w+4\pi K_{21} G(x,Q)}}-1\right)=0,\\
\nabla\left(\log h_1-4\pi {\sum_{q\in S_2\setminus S_1}G(x,q)}\right)\mid_{x=Q}=0.
\end{array}\right.~\quad\quad\quad\quad\quad(1.12)
\end{align*}
The system (1.12) is a de-coupled system and we can easily compute the degree, combined with what we get from the differences between (\ref{1.1}) and $(1.10)_t$. Then we can derive the degree formula of (\ref{1.1}).
\medskip

\subsection{Toda system}
Our second purpose is to apply   the degree formula  \eqref{1.15} of the shadow system to compute the topological degree of the Toda system corresponding to a semi-simple Lie algebra of rank $2$. In this paper, we only consider the case of rank two. There are
only three types of rank two: $\textbf{A}_2$, $\textbf{B}_2=\textbf{C}_2$ and $\textbf{G}_2$ and their Cartan matrix $\textbf{K}=(K_{ij})$ is $\left(\begin{array}{ll}~2&-1\\-1&~2\end{array}\right)$,
$\left(\begin{array}{ll}~2&-1\\-2&~2\end{array}\right),$ $\left(\begin{array}{ll}~2&-1\\-3&~2\end{array}\right)$ respectively.

The corresponding Toda system (of mean field type) is
\begin{equation*}
\quad\quad\ \Delta u^*_i+\sum_{j=1}^2K_{ij}\rho_j\left(\frac{h_j^*e^{u_j^*}}{\int_Mh_j^*e^{u_j^*}}-1\right)=4\pi\sum_{p\in S_i}\alpha_{p,i}(\delta_p-1), \ i=1,2,
\quad\quad  \ (1.13)\end{equation*}where   $h_i^*$ are positive smooth functions on $M$, $\rho_i$ are  {positive} constants, $\alpha_{p,i}\in\mathbb{N}$ for every $p\in S_i$ and $S_i$ is a subset of $M$, $\delta_p$ is the Dirac measure at $p\in M$.

For $(1.13)$, conventionally we let
$u_i^*(x)=u_i(x)-4\pi\sum_{p\in S_i}\alpha_{p,i} G(x,p)$.
Then $u_i$ satisfies
\begin{equation*}
\quad\quad\quad\quad
\Delta u_i+\sum_{j=1}^2K_{ij}\rho_j\left(\frac{h_je^{u_j}}{\int_Mh_je^{u_j}}-1\right)=0\ \textrm{in}\ M, \ i=1,2,
\quad\quad\quad\quad\quad\quad \ (1.14)\end{equation*}
where \begin{equation*}
\quad\quad\quad\quad\quad\quad\quad\quad h_i(x)=h_i^*(x)e^{-4\pi\sum_{p\in S_i}\alpha_{p,i}G(x,p)},\ \ h_i^*>0.\quad\quad\quad\quad\quad\quad\ \ (1.15)\end{equation*}



\noindent Clearly, the equation $(1.14)$ remains the same if each component $u_i$  is replaced by $u_i+c_i$, where $c_i$ is a constant.
Thus, we  assume that $u_i\in \mathring{H}^1(M)$.

It is known that equation $(1.13)$ is closely related to the classical Pl$\ddot{u}$cker formula for the holomorphic curves in projective space. Let $f$ be a holomorphic curve from a simple domain $D$ in $\mathbb{C}$ into $\mathbb{CP}^n$. Lift locally $f$ to $\mathbb{C}^{n+1}$ and denote the lift by $\nu(z)=\left[\nu_0(z),\nu_1(z),\cdots,\nu_n(z)\right]$. The $k$th associated curve of $f$ is defined by
$$f_k:D\rightarrow G(k,n+1)\subset\mathbb{CP}(\Lambda^{k}\mathbb{C}^{n+1}),~
f_k(z)=\left[\nu(z)\wedge\nu'(z)\wedge\cdots\wedge\nu^{(k-1)}(z)\right],$$
where $\nu^{(j)}$ is the $j-$th derivative of $\nu$ with respect to $z$. Let
$$\Lambda_k(z)=\nu(z)\wedge\cdots\wedge \nu^{(k-1)}(z),$$
and the well-known infinitesimal Pl$\ddot{u}$cker formula (see \cite{gh}) gives,
\begin{align*}
\quad\quad\quad
\frac{\partial^2}{\partial z\partial\overline{z}}\log\|\Lambda_k(z)\|^2=\frac{\|\Lambda_{k-1}(z)\|^2\|\Lambda_{k+1}(z)\|^2}{\|\Lambda_k(z)\|^4}~
\mathrm{for}~k=1,2,\cdots,n,\quad\quad (1.16)
\end{align*}
where we define the norm $\|\cdot\|^2=\langle\cdot,\cdot\rangle$ by the Fubini-Study metric in $\mathbb{CP}(\Lambda^{k}\mathbb{C}^{n+1})$ and put $\|\Lambda_0(z)\|^2=1$. We observe that $(1.16)$ holds only for $\|\Lambda_k(z)\|>0$, i.e., all the unramificated points $z$. Let us set
$\|\Lambda_{n+1}(z)\|=1$ by normalization (analytical extended at the ramificated points) and
\begin{align*}
U_k(z)=-\log\|\Lambda_k(z)\|^2+k(n-k+1)\log2,\quad 1\leq k\leq n.
\end{align*}
Then, from $(1.16)$ we have
\begin{align*}
\Delta U_i+\exp\left(\sum_{j=1}^nK_{ij}U_j\right)-K_0=0~\mathrm{in}~M\setminus S,
\end{align*}
where $K_0$ is the Gaussian curvature, $S$ denotes the set of all the ramificated points of $f$ in $M$ and $\textbf{K}=(K_{ij})_{n\times n}=
\left(\begin{array}{lllll}
~2&-1&~0&\cdots&~0\\
-1&~2&-1&\cdots&~0\\
~0&-1&~2&\cdots&~0\\
~\vdots&~\vdots&~\vdots&~\vdots&~\vdots\\
~0&\cdots&~0&-1&~2
\end{array}\right).
$\\ Near each $p\in S$, we have $U_i=2\gamma_{p,i}\log|z-p|+O(1).$ Thus, $U_i$ satisfies
\begin{align*}
\quad\quad\quad\quad\quad\quad\
\Delta U_i+\exp\left(\sum_{j=1}^nK_{ij}U_j\right)-K_0=4\pi\sum_{p\in S}\gamma_{p,i}\delta_{p},\quad\quad\quad\quad\quad\quad\  (1.17)
\end{align*}
where $\gamma_{p,i}$ stands for the total ramification index at $p$.
\medskip

Let $u_i^*=\sum_{j=1}^nK_{ij}U_j$, $\alpha_{p,i}=\sum_{j=1}^nK_{ij}\gamma_{p,j}$. Then it is easy to see that $u_i^*$ satisfies
\begin{equation*}
\Delta u_i^*+\sum_{j=1}^nK_{ij}\left(e^{u_j^*}-K_0\right)=4\pi\sum_{p\in S}\alpha_{p,i}\delta_p,~i=1,2,\cdots,n.
\end{equation*}
When $(M,g)$ is the standard two dimensional sphere with $vol(\mathbb{S}^2)=1$. Then the above equation is
\begin{equation*}
\quad\quad\quad\quad\
\Delta u_i^*+\sum_{j=1}^nK_{ij}\left(e^{u_j^*}-4\pi\right)=4\pi\sum_{p\in S}\alpha_{p,i}\delta_p,~i=1,2,\cdots,n.\quad\quad\quad\quad\   (1.18)
\end{equation*}
Therefore any holomorphic curve from $\mathbb{S}^2$ to $\mathbb{CP}^n$ associates with a solution $\mathbf{u}^*=(u_1^*,\cdots,u_n^*)$ of $(1.18)$. Conversely, given any solution $\mathbf{u}^*=(u_1^*,\cdots,u_n^*)$ of $(1.18)$ in $\mathbb{S}^2$, we can construct
holomorphic curves of $\mathbb{S}^2$ into $\mathbb{CP}^n$ which has the given ramification index $\alpha_{p,i}$ at $p$. One can see
\cite{lwy} for details of the proof. When $n=2$ and $S_1=S_2=S$,  by integrating $(1.17)$, it is easy to see $(1.17)$ can be written as the form of $(1.13)$ with
\begin{equation*}
\quad\quad\quad\quad\quad\quad\quad\quad\quad\quad\quad\quad\quad
\rho_i=4\pi\Big(1+\sum_{j=1}^2K^{ij}N_j\Big),\quad\quad\quad\quad\quad\quad\quad\quad\quad  (1.19)
\end{equation*}
where $(K^{ij})_{2\times 2}=(K_{ij})_{2\times 2}^{-1}$ and $N_i=\sum_{p\in S}\alpha_{p,i}$.
\medskip

System $(1.13)$ also appears in many other problems which arise in geometry and physics. For example,  when  $(1.13)$ is reduced to  the single
equation
\eqref{1.7},  it is related to the Nirenberg problem of finding the prescribing Gaussian curvature if $S_0=\emptyset$, and the existence of a positive constant curvature metric with conic singularities if $S_0\neq\emptyset$. Equation (\ref{1.7}) has been extensively studied during the past decades (see \cite{bt1, bclt, bm, cgy, cl1, cl2, cl3, cl4,  l1,lw0,ly1, nt1,nt2,t,y} and the references therein). Recently,  {it turns out that the} equation (\ref{1.7}) has a deep relation with the classical Lame equation and the Painleve VI equation. We refer the interested readers to \cite{clw,ckl} for the details about the connection. For the general Toda system $(1.13)$, we can also find it in the gauge theory in many physics models. For example, to describe the physics of high critical temperature superconductivity, a model of relative Chern-Simons model was proposed and this model can be reduced to a $n$ times $n$ system with exponential nonlinearity if the gauge potential and the Higgs field are algebraically restricted. Then the Toda system $(1.13)$ is one of the limiting equations if the coupling constant tends to zero. For the detail discussion between the Toda system and its background in Physics, we refer the readers to \cite{d, y} and the references therein. For the   developments of Toda system and general Liouville system, see \cite{m5,d,   jw, jlw, ll,   lwz0, lwz1, lwz2, ly2, lz1, lz2, lz3, m3, m4, m6, nt3,   y1} and references therein.

\medskip

In order to compute the Leray-Schauder degree of the system $(1.14)$, we have to determine the set of parameters $(\rho_1,\rho_2)$  such that the a priori bounds for the solutions of $(1.14)$ might fail. Recently, Lin, Wei and Zhang considered this problem and obtained the following result.

\vspace{0.5cm}

\noindent {\bf{Theorem C}}. (\cite{lwz}) {\em  Let $u=(u_1,u_2)$ be a solution of $(1.14)$ with all $\alpha_{p,i}\in\mathbb{N}$. Suppose   $\rho_1, \rho_2\notin4\pi\mathbb{N}$. Then,
$$\|u_1\|_{L^{\infty}}+\|u_2\|_{L^{\infty}}\leq C$$
for a constant $C$ that only depends on $\rho_i,h_i,\alpha_{p,i}$ and $M.$  }

\vspace{0.5cm}

 To obtain the a priori estimate for all the solutions of $(1.14)$, Lin, Wei and Zhang classified all  the local mass at each blow up point of a sequence of blow up solutions $(u_{1k},u_{2k})$ of $(1.14)$ with $\rho_k=(\rho_{1k},\rho_{2k})$ tends to $\rho=(\rho_1,\rho_2)$. The local mass is defined by
 $$\sigma_i(p)=\lim_{r\rightarrow 0}\lim_{k\rightarrow+\infty}\frac{1}{2\pi}\int_{B_r(p)}\rho_ih_ie^{\tilde{u}_{ik}}dx,~i=1,2,$$
 where
 $$\tilde{u}_{ik}=u_{ik}-\log\Big(\int_Mh_ie^{u_{ik}}\Big).$$
 \begin{remark} We note that $(\sigma_1(p),\sigma_2(p))\neq(0,0)$ if and only if $p$ is a  blow up point. The sufficient part is trivial, but
 the necessary part can follow from the Brezis-Merle Theorem. The argument is standard now. For the reader's convenience,  we sketch it briefly in section 2.
 \end{remark}
 Very recently, Lin, Wei,  and Zhang in \cite{lwz} proved that

 \vspace{0.5cm}

\noindent {\bf{Theorem D}}. (\cite{lwz}) {\em  Suppose $p$ is a blow up point of a sequence of blow up solutions of $(1.14)$ with all $\alpha_{p,i}\in\mathbb{N}$, $i=1,2$.    Then   $\sigma_1(p), \sigma_2(p)\in 2\mathbb{N}\cup\{0\}$.}

 \vspace{0.5cm}

Theorem D  implies that if $S_1\cup S_2=\emptyset$, then
 \begin{itemize}\item[(i)] if  $\textbf{K}=\textbf{A}_2$, then
 \begin{align*}(\sigma_1(p),\sigma_2(p))\in\Big\{(2,0),(0,2),(2,4),(4,2),(4,4)\Big\}.\end{align*}
 \item[(ii)] if $\textbf{K}=\textbf{B}_2$, then
 \begin{align*}
 (\sigma_{1},\sigma_{2})\in\Big\{(2,0), (0,2),   (4,2), (2,6),  (4,8), (6,6), (6,8)\Big\}.
 \end{align*}
  \item[(iii)] if $\textbf{K}=\textbf{G}_2$,     then
 \begin{align*}
 (\sigma_{1},\sigma_{2})\in\Big\{(2,0), (0,2),  (2,8),  (4,2),  (12,18), (12,20), \\ (4,12), (8,8), (8,18), (10,12), (10,20) \Big\}.
 \end{align*}
 \end{itemize}
This generalizes an earlier result by Lin and Zhang \cite{lz4}. We notice that for $(1.14)$ with singular sources, the number of the possibility of the local mass relies heavily on the coefficients $\alpha_{p,i}$ of the singular source, as the coefficient becomes larger, the number of possibility gets bigger. This would increase the difficulty in analyzing the bubbling solution for $(1.14)$.
\medskip

By Theorem C, we can define the Leray-Schauder degree $d_{\rho_1,\rho_2}^{\mathbf{K}}$ for $(1.14)$ when $\rho_1\in(4i\pi,4(i+1)\pi)$ and $\rho_2\in(4j\pi,4(j+1)\pi),i,j\in\mathbb{N}\cup\{0\}$ and $\mathbf{K}=\mathbf{A}_2,\mathbf{B}_2$ or $\mathbf{G}_2$. Again the degree is a homotopic invariant and is a constant when $\rho_1\in(4i\pi,4(i+1)\pi)$ and $\rho_2\in(4j\pi,4(j+1)\pi),i,j\in\mathbb{N}\cup\{0\}$. We denote it by $d_{i,j}^{\mathbf{K}}$. Then we introduce the generating function $g_i^{(2)}(x,\mathbf{K}):$
\begin{align*}\quad\quad\quad\quad\quad\quad\quad\quad\quad\quad\quad\quad\   g_i^{(2)}(x,\mathbf{K})=\sum_{j=0}^{\infty}d_{i,j}^{\mathbf{K}}x^j.\quad\quad\quad\quad\quad\quad\quad\quad\quad\quad\quad (1.20)\end{align*}
Obviously, $g_0^{(2)}(x)=g^{(1)}(x)$, where $g^{(1)}(x)$ is given by \eqref{1.9} with   $S_0=S_2$. So far, the first three authors with Wei \cite{llwy} obtained $g_1^{(2)}(x)$ when $S_1\cup S_2=\emptyset$ in the following theorem.
\medskip

\noindent {\bf{Theorem E}}. (\cite[Theorem 1.6]{llwy}) {\em Let $g^{(2)}_1(x,\mathbf{K})$ be the generating function defined above. Suppose $S_1\cup S_2=\emptyset.$ Then the generating function $g^{(2)}_1(x,\mathbf{K})$ is determined by,
\begin{align*}
g^{(2)}_1(x,\mathbf{K})=\sum_{j=0}^\infty d_{1,j}^{\textbf{K}}x^j=(1-x)^{\chi(M)-1}\left(1-\chi(M)(1+x+\cdots+x^{-K_{21}})\right),
\end{align*}
}

\begin{remark} We can also define the generating function
$$\tilde{g}_i^{(2)}(x,\mathbf{K})=\sum_{j=0}^{\infty}d_{j,i}^{\mathbf{K}}x^j.$$
It is easy to see $\tilde{g}_0^{(2)}(x,\mathbf{K})=g^{(1)}(x)$, where $g^{(1)}(x)$ is given by \eqref{1.9} with  $S_0=S_1$.  As for $g_1^{(2)}(x,\mathbf{K})$, we can also derive $\tilde{g}_i^{(2)}(x,\mathbf{K})$ when $S_1\cup S_2=\emptyset$,
\begin{align*}
\tilde{g}_i^{(2)}(x,\mathbf{K})=\sum_{j=0}^\infty d_{j,1}^{\textbf{K}}x^j=(1-x)^{\chi(M)-1}\left(1-\chi(M)(1+x+\cdots+x^{-K_{12}})\right).
\end{align*}
See \cite{llwy} for details.
\end{remark}

In the present paper, we want to extend Theorem E for the system $(1.14)$ when $S_1\cup S_2\neq\emptyset$. Following   \cite{llwy}, we have to compute the gap between $d_{0,j}^{\mathbf{K}}$ and $d_{1,j}^{\mathbf{K}}$.  Our strategy is to reduce the computation of the gap to a single equation. More precisely,  we consider all the bubbling solutions of $(1.14)$ when $\rho_2\notin4\pi\mathbb{N}$ is fixed and $\rho_{1k}\rightarrow4\pi$ from below or above of $4\pi$. Then we can show that $u_{1k}$ blows up at $Q\notin S_1$ and $u_{2k}$ converges to $w+4\pi K_{21} G(x,Q)$ in $C^{2,\alpha}_{loc}(M\setminus\{Q\})$, where $(w,Q)$ satisfies the  shadow system \eqref{1.1}.
In conclusion, we have the following theorem,
\begin{theorem}
\label{th1.1}
Suppose  $h_i$ satisfies $(1.15)$ with $\alpha_{p,i}\in\mathbb{N},~i=1,2.$   Let   $(u_{1k},u_{2k})$ be a sequence of solutions of $(1.14)$ with $(\rho_{1k},\rho_{2k})\rightarrow(4\pi,\rho_2)$ satisfying $\rho_2\notin4\pi\mathbb{N}$ and $\max_{M}(u_{1k},u_{2k})\to+\infty$. Then, we have
\[\rho_{1k}\frac{h_1e^{u_{1k}}}{\int_Mh_1e^{u_{1k}}}\rightarrow 4\pi\delta_Q,\   Q\in M\setminus S_1,\  \textrm{and}\] \[u_{2k}\rightarrow w+4\pi K_{21} G(x,Q)\ \textrm{ in}\ C^{2,\alpha}_{loc}(M\setminus\{Q\}),\]
where $(w,Q)$ is a solution of (\ref{1.1}).
\end{theorem}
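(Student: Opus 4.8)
The plan is to exploit the structure of the Toda system $(1.14)$ viewed as a perturbation of a single mean field equation in the limit $\rho_{1k}\to 4\pi$, combined with the local mass classification of Theorem~D and the blow-up analysis that underlies Theorem~C. First I would recall that since $\rho_2\notin 4\pi\mathbb{N}$, the second component $u_{2k}$ cannot itself concentrate in a way that produces a blow-up point with nontrivial $\sigma_2$ in isolation; the hypothesis $\max_M(u_{1k},u_{2k})\to+\infty$ together with Remark~2 forces the existence of at least one blow-up point $Q$ where $(\sigma_1(Q),\sigma_2(Q))\neq(0,0)$. The main analytic task is to show that as $\rho_{1k}\to 4\pi$ there is exactly one such blow-up point, that it carries local mass $\sigma_1(Q)=4\pi$ (in the normalization of Theorem~D, $\sigma_1=2$), and that $Q\notin S_1$. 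Here I would invoke Theorem~D to restrict the admissible local masses $(\sigma_1(Q),\sigma_2(Q))$ to the finite list displayed after Theorem~D; the constraint $\rho_{1k}\to 4\pi$ means the total first mass $\sum_Q \sigma_1(Q)=\rho_1/(2\pi)\to 2$, which, combined with the even-integer quantization $\sigma_1\in 2\mathbb{N}$, pins down a single blow-up point with $\sigma_1=2$ and forces $\sigma_2(Q)=0$ at that point (the pairs with $\sigma_1=2$ and $\sigma_2=0$ are exactly the leading entries of each list). This is the step I expect to be the main obstacle, because ruling out multiple blow-up points and excluding the singular set $S_1$ requires the delicate Pohozaev-type identities and the sharp blow-up estimates developed in \cite{lwz}; in particular showing $Q\notin S_1$ uses that $\alpha_{p,1}\in\mathbb{N}$ would otherwise raise $\sigma_1$ above $2$.

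Once the single simple blow-up of the first component is established, I would extract the limiting profile. The standard Brezis--Merle/Li--Shafrir concentration-compactness (the analysis behind Theorem~A) gives
\[
\rho_{1k}\frac{h_1 e^{u_{1k}}}{\int_M h_1 e^{u_{1k}}}\rightharpoonup 4\pi\,\delta_Q
\]
in the sense of measures, which is the first assertion. For the second component, since $\sigma_2(Q)=0$ the sequence $u_{2k}$ does \emph{not} blow up; by the a priori bound of Theorem~C applied locally away from $Q$ and elliptic regularity, $u_{2k}$ is bounded in $C^{2,\alpha}_{loc}(M\setminus\{Q\})$, so up to a subsequence $u_{2k}\to w^*$ in $C^{2,\alpha}_{loc}(M\setminus\{Q\})$ for some limit $w^*$. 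The role of the first equation in $(1.14)$ in the limit is to inject the Dirac mass $4\pi\delta_Q$ into the equation for the second component through the coupling coefficient $K_{21}$: passing to the limit in
\[
\Delta u_{2k}+K_{21}\rho_{1k}\Big(\frac{h_1 e^{u_{1k}}}{\int_M h_1 e^{u_{1k}}}-1\Big)+K_{22}\rho_2\Big(\frac{h_2 e^{u_{2k}}}{\int_M h_2 e^{u_{2k}}}-1\Big)=0
\]
yields that $w^*$ solves the first equation of $(1.14)$ with a $4\pi K_{21}\delta_Q$ source. Writing $w^*=w+4\pi K_{21}G(x,Q)$, where $G$ is the Green function of \eqref{defofgreenfunction}, absorbs this Dirac source and shows that the regular part $w$ satisfies the first equation of the shadow system \eqref{1.1}.

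It remains to derive the balance (second) equation of \eqref{1.1}, namely $\nabla\big(\log h_1 e^{\frac{K_{12}}{2}w}\big)\big|_{x=Q}=0$. This is the location equation for the blow-up point and is obtained from a Pohozaev identity (equivalently, a vanishing-first-moment condition) for the blowing-up first component $u_{1k}$. Near $Q$ the rescaled bubble converges to a standard entire Liouville solution, and the next-order expansion of $u_{1k}$ around $Q$ is governed by the regular part of the local Green's representation, which at the limit reads $\log h_1 + \tfrac{1}{2}K_{12}w$ evaluated near $Q$ (the factor $\tfrac{1}{2}K_{12}$ enters because $u_{2k}\to w+4\pi K_{21}G(\cdot,Q)$ couples into the $h_1$-weight through the $K_{12}$ entry, and the Green-function part has vanishing gradient at its own singularity by symmetry). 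Demanding that the simple bubble be centered — i.e., that the $O(1)$ correction have no linear term — forces the gradient of this regular part to vanish at $Q$, which is precisely the balance condition; the accompanying non-concentration of $u_{2k}$ guarantees $Q\notin S_1$ as already noted. Assembling these, $(w,Q)$ solves \eqref{1.1}, completing the proof.
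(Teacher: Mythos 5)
Your overall strategy coincides with the paper's: use the quantization of local masses from Theorem D together with the Pohozaev identity \eqref{poho} to show that $u_{1k}$ has a single simple blow-up point $Q\notin S_1$ with $(\sigma_1(Q),\sigma_2(Q))=(2,0)$, then pass to the limit in the second equation and read off the balance condition from Chen--Lin's pointwise estimates. However, the step where you ``pin down'' $\sigma_2(Q)=0$ has a genuine gap. First, the list of admissible pairs displayed after Theorem D is stated only for $S_1\cup S_2=\emptyset$, and even in that list the pairs $(2,4)$ (for $\mathbf{A}_2$), $(2,6)$ (for $\mathbf{B}_2$) and $(2,8)$ (for $\mathbf{G}_2$) have $\sigma_1=2$ and $\sigma_2>0$, so ``the leading entries of each list'' do not exclude them; one must instead argue, as the paper does, that $(2,2)$ violates \eqref{poho}, hence $\sigma_2(Q)\ge4$, and then derive a contradiction. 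Second, and more importantly, the contradiction with $\rho_2\notin4\pi\mathbb{N}$ does not follow from quantization of $\sigma_2$ alone: one needs $\tilde u_{2k}\to-\infty$ away from its blow-up set, so that the total mass $\rho_{2k}\int_Mh_2e^{\tilde u_{2k}}=\rho_{2k}$ converges to $2\pi\sum_p\sigma_2(p)\in4\pi\mathbb{N}$. Blow-up without concentration of mass is exactly the phenomenon the authors highlight, and proving that concentration is forced whenever the relevant Pohozaev inequality holds is the content of Lemma \ref{atleast} and Lemma \ref{weakconcentration}, which occupy most of Section \ref{section2}; your sketch asserts this rather than proves it, and the same lemmas are also needed to dispose of possible blow-up points of $u_{2k}$ away from $Q$. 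Your reason for $Q\notin S_1$ is correct and is precisely \eqref{poho} with $\sigma_2(Q)=0$, which yields $\sigma_1(Q)=2(1+\gamma_1(Q))$.

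A smaller point: your justification of the weight $\log h_1+\tfrac12K_{12}w$ in the balance condition via ``the Green-function part has vanishing gradient at its own singularity by symmetry'' is not correct as stated ($\nabla_xG(x,Q)$ is singular at $x=Q$, and the regular part has no reason to be critical there). The clean route, which the paper takes, is the substitution $(u_{1k},u_{2k})^T=\mathbf{K}(v_{1k},v_{2k})^T$: then $2v_{1k}$ satisfies a scalar mean field equation with the smooth positive weight $h_1e^{K_{12}v_{2k}}$ near $Q$, with $v_{2k}\to\tfrac12w$ in $C^{1,\alpha}(M)$, so no Green function ever enters the weight; ESTIMATE B and Lemma 4.1 of \cite{cl1} then give both the balance condition and the $C^{2,\alpha}$ convergence of $v_{2k}$, hence of $u_{2k}$ away from $Q$. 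Your final conclusion is correct, but these steps need the arguments above to be complete.
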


  Once we get the degree $d_j^S$ for the shadow system (\ref{1.1}) by Theorem \ref{th1.2},  as a consequence, we are able to obtain the degree gap between $d_{0,j}^{\mathbf{K}}$ and $d_{1,j}^{\mathbf{K}}$ by the following result.

\medskip

\noindent {\bf{Theorem F}}. (\cite[Theorem 1.4]{llwy}) {\em We have
 \begin{equation*} d_{1,j}^{\textbf{K}}-d_{0,j}^{\textbf{K}}=-d^S_j,\end{equation*}
}

\medskip

Now we can obtain the generating function $g_1^{(2)}(x,\mathbf{K})$  for $(1.14)$ as follows.

\begin{theorem}
\label{th1.4}
Suppose that  $h_i$ satisfies $(1.15)$ with $\alpha_{p,i}\in\{1,2\},~i=1,2.$
 Then the generating function $g_1^{(2)}(x,\mathbf{K})$  given by $(1.20)$ can be represented by
\begin{align*}
g_1^{(2)}(x,\mathbf{K})=\sum_{k=0}^{\infty}d_{1,k}^{\mathbf{K}}x^k=(1-x)^{\chi(M)-1}\prod_{p\in S_2}(1+x+\cdots+x^{\alpha_{p,2}})-g_s(x),
\end{align*}
where $g_s(x)$ is given in (\ref{1.15}).
\end{theorem}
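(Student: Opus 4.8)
The plan is to obtain Theorem \ref{th1.4} as a purely formal consequence of three ingredients already at our disposal: the degree formula for the scalar mean field equation (Theorem B), the identification of $d_{0,j}^{\mathbf{K}}$ with that mean field degree, and the gap formula of Theorem F combined with the shadow-system degree of Theorem \ref{th1.2}. No new analysis is needed beyond bookkeeping at the level of generating functions; the substance has been pushed into the earlier results.

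First I would compute $g_0^{(2)}(x,\mathbf{K})$. As recorded in the excerpt, when $\rho_1\in(0,4\pi)$ the degree decouples and $g_0^{(2)}(x,\mathbf{K})=g^{(1)}(x)$ with $S_0=S_2$ and $\alpha_p=\alpha_{p,2}$. Invoking Theorem B in its general form,
\[
g_0^{(2)}(x,\mathbf{K})=(1-x)^{\chi(M)-|S_2|-1}\prod_{p\in S_2}\bigl(1-x^{1+\alpha_{p,2}}\bigr).
\]
Since each $\alpha_{p,2}\in\{1,2\}\subset\mathbb{N}$, I would apply the factorization $1-x^{1+\alpha}=(1-x)(1+x+\cdots+x^{\alpha})$ to every factor in the product. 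This produces a compensating $(1-x)^{|S_2|}$, and after cancellation
\[
g_0^{(2)}(x,\mathbf{K})=(1-x)^{\chi(M)-1}\prod_{p\in S_2}\bigl(1+x+\cdots+x^{\alpha_{p,2}}\bigr),
\]
an expression valid for every value of $\chi(M)$, since the first form in Theorem B holds in general.

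Next I would translate Theorem F into generating functions. Multiplying the identity $d_{1,j}^{\mathbf{K}}-d_{0,j}^{\mathbf{K}}=-d_j^S$ by $x^j$ and summing over $j\ge 0$ gives
\[
g_1^{(2)}(x,\mathbf{K})=g_0^{(2)}(x,\mathbf{K})-g_s(x),
\]
where the last term is exactly $g_s(x)$ by its definition in Theorem \ref{th1.2}. Substituting the expression for $g_0^{(2)}(x,\mathbf{K})$ found above yields precisely the claimed formula. One should note in passing that all three series are legitimate power series: the coefficients $d_{i,j}^{\mathbf{K}}$ and $d_j^S$ are honest Leray--Schauder degrees, their definition being justified by the a priori bounds of Theorem C and Theorem \ref{compactnessthm}.

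The genuine content lies entirely upstream rather than in this assembly. The hard inputs are the gap formula (Theorem F, which rests on Theorem \ref{th1.1}, showing that the bubbling of the Toda system as $\rho_{1k}\to 4\pi$ is governed by the shadow system) and the shadow-system degree $g_s(x)$ of Theorem \ref{th1.2}. Within the present argument the only points demanding care are conventional: correctly matching the singular data ($S_0=S_2$, $\alpha_p=\alpha_{p,2}$) when specializing Theorem B, and applying the identity for $1-x^{1+\alpha}$ uniformly so that the exponent of $(1-x)$ collapses from $\chi(M)-|S_2|-1$ to $\chi(M)-1$. I do not anticipate any analytic obstacle at this final step.
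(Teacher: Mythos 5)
Your proposal is correct and follows essentially the same route as the paper: identify $d_{0,j}^{\mathbf{K}}$ with the scalar mean field degree via Theorem B, rewrite its generating function as $(1-x)^{\chi(M)-1}\prod_{p\in S_2}(1+x+\cdots+x^{\alpha_{p,2}})$, and subtract $g_s(x)$ using Theorem F together with Theorem \ref{th1.2}. The only difference is that you spell out the factorization $1-x^{1+\alpha}=(1-x)(1+x+\cdots+x^{\alpha})$ explicitly, which the paper leaves implicit.
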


As a consequence of Theorem \ref{th1.4}, we have the following corollaries.
\begin{corollary}
\label{cr1.2}
Suppose  that the assumption in Theorem \ref{th1.4} holds. If $\chi(M)\leq0$, then system $(1.14)$ always has a solution when $\rho_1\in(0,4\pi)\cup(4\pi,8\pi)$, $\rho_2\notin4\pi\mathbb{N}$.
\end{corollary}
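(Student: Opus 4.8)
The plan is to deduce existence from the nonvanishing of the Leray--Schauder degree: a nonzero degree forces the solution set to be nonempty, so it suffices to show that for every admissible $j$ the relevant degree coefficient is nonzero. Since $\rho_2\notin4\pi\mathbb{N}$ means $\rho_2\in(4j\pi,4(j+1)\pi)$ for a unique $j\ge0$, and $\rho_1\in(0,4\pi)$ resp. $\rho_1\in(4\pi,8\pi)$ correspond to $i=0$ resp. $i=1$, the statement splits into showing $d_{0,j}^{\mathbf K}\neq0$ and $d_{1,j}^{\mathbf K}\neq0$ for all $j\ge0$. In fact I would prove both are \emph{strictly positive}, which is the natural outcome of the generating-function identities combined with the sign information already recorded for $g_s(x)$.

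For the range $\rho_1\in(0,4\pi)$ I would use that $g_0^{(2)}(x,\mathbf K)=g^{(1)}(x)$ with $S_0=S_2$. Since $\alpha_{p,2}\in\{1,2\}\subset\mathbb N$ and $\chi(M)\le0$, Theorem B (formula \eqref{1.9}) applies and yields $g^{(1)}(x)=(1+x+\cdots)^{1-\chi(M)}\prod_{p\in S_2}(1+x+\cdots+x^{\alpha_{p,2}})$ with $d_j>0$ for all $j\ge0$. Hence $d_{0,j}^{\mathbf K}>0$ and $(1.14)$ has a solution for every such $\rho_2$. For the range $\rho_1\in(4\pi,8\pi)$ I would read off the coefficients from Theorem \ref{th1.4},
\[
g_1^{(2)}(x,\mathbf K)=(1-x)^{\chi(M)-1}\prod_{p\in S_2}(1+x+\cdots+x^{\alpha_{p,2}})-g_s(x).
\]
Writing $\chi(M)-1=-(1-\chi(M))$ with $1-\chi(M)\ge1$, the factor $(1-x)^{\chi(M)-1}=\sum_{k\ge0}\binom{(1-\chi(M))+k-1}{k}x^k$ has all positive coefficients, and multiplying by the polynomial $\prod_{p\in S_2}(1+\cdots+x^{\alpha_{p,2}})$ (nonnegative coefficients, constant term $1$) keeps every coefficient strictly positive. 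For the subtracted term I would invoke the sign of $g_s(x)$ established before Corollary \ref{cr1.1}: when $\chi(M)<0$, or $\chi(M)=0$ with $S_1\cup S_2\neq\emptyset$, all coefficients of $g_s(x)$ are negative, so $-g_s(x)$ contributes nonnegative coefficients; in the remaining boundary case $\chi(M)=0,\ S_1\cup S_2=\emptyset$ one checks directly from \eqref{1.15} that $g_s(x)\equiv0$, since each bracketed term carries the prefactor $\chi(M)$ or is an empty sum/product. In all cases $d_{1,j}^{\mathbf K}>0$ for every $j\ge0$, giving existence for $\rho_1\in(4\pi,8\pi)$ as well.

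The genuinely substantive input is not in this bookkeeping but in the sign of $g_s(x)$ itself, which rests on the nonnegativity of the coefficients of $(1+\cdots+x^{-K_{21}})(1+\cdots+x^{\alpha_{p,2}})-(1+x+\cdots+x^{\alpha_{p,2}-K_{21}})$ noted after Theorem \ref{th1.2}, and ultimately on Theorem \ref{th1.4}. The main care in the present argument is therefore to treat $\chi(M)\le0$ uniformly: in particular to handle the degenerate case $\chi(M)=0,\ S_1\cup S_2=\emptyset$, where the strict-negativity statement for $g_s$ collapses to $g_s\equiv0$, and to confirm that the leading product retains strictly positive coefficients so that $d_{1,j}^{\mathbf K}=(\text{positive})+(\text{nonnegative})>0$ with no cancellation.
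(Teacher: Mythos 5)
Your argument is correct and is exactly the route the paper intends (the paper states Corollary \ref{cr1.2} as an immediate consequence of Theorem \ref{th1.4} without writing out a proof): nonvanishing of $d_{0,j}^{\mathbf K}$ follows from Theorem B with $S_0=S_2$, and positivity of $d_{1,j}^{\mathbf K}$ follows from expanding $(1-x)^{\chi(M)-1}\prod_{p\in S_2}(1+\cdots+x^{\alpha_{p,2}})$ as a series with strictly positive coefficients and using the nonpositivity of the coefficients of $g_s(x)$ recorded before Corollary \ref{cr1.1}. Your explicit treatment of the boundary case $\chi(M)=0$, $S_1\cup S_2=\emptyset$, where $g_s\equiv 0$ and the paper's strict-negativity remark does not apply, is a worthwhile point of care that the paper leaves implicit.
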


\noindent For equation $(1.18)$ with $n=2$ and $\textbf{K}=\textbf{A}_2$, we recall that
$$N_1=\sum_{p\in S_1}\alpha_{p,1}~\mathrm{and}~N_2=\sum_{p\in S_2}\alpha_{p,2},$$
where $S_1=S_2=S.$ By $(1.19)$, if $N_1\not\equiv N_2\mod3$, then $\rho_i\notin 4\pi\mathbb{N}$. Suppose that $\rho_1<8\pi$. Then we have
\begin{corollary}
\label{cr1.3}
Suppose $M=\mathbb{S}^2,$ $\textbf{K}=\textbf{A}_2,$  $S_1=\emptyset$,   $|S_2|=1,2$, and  $\alpha_{p,2}=1$ for any $p\in S_2$, then  equation $(1.18)$ has a solution.
\end{corollary}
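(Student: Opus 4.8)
The plan is to deduce the corollary directly from the degree formula of Theorem~\ref{th1.4}, the point being that Corollary~\ref{cr1.2} is \emph{not} available here: for $M=\mathbb{S}^2$ one has $\chi(M)=2>0$, so the coefficients of $g_1^{(2)}$ need no longer all be negative, and I must instead single out the one coefficient that counts and verify it is nonzero.

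First I would put $(1.18)$ into the normalized form $(1.14)$. With $h_i^*\equiv1$, the substitution $u_i^*=u_i-4\pi\sum_{p}\alpha_{p,i}G(x,p)$ carries $(1.18)$ into $(1.14)$, the parameters being fixed by $(1.19)$, $\rho_i=4\pi\big(1+\sum_jK^{ij}N_j\big)$ with $(K^{ij})=\mathbf{A}_2^{-1}=\tfrac13\left(\begin{smallmatrix}2&1\\1&2\end{smallmatrix}\right)$ and $N_i=\sum_{p\in S_i}\alpha_{p,i}$. Since $S_1=\emptyset$ gives $N_1=0$ and $\alpha_{p,2}=1$ gives $N_2=|S_2|$, for $|S_2|=1$ we get $(\rho_1,\rho_2)=(\tfrac{16\pi}{3},\tfrac{20\pi}{3})$ and for $|S_2|=2$ we get $(\rho_1,\rho_2)=(\tfrac{20\pi}{3},\tfrac{28\pi}{3})$; in both cases $N_1\not\equiv N_2 \bmod 3$, so $\rho_1,\rho_2\notin4\pi\mathbb{N}$ and Theorem~\ref{th1.4} applies.

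Next I would place these parameters in the degree grid. In both cases $\rho_1\in(4\pi,8\pi)$, so the $\rho_1$-index is $1$ and the relevant generating function is $g_1^{(2)}(x,\mathbf{A}_2)$. When $|S_2|=1$, $\rho_2=\tfrac{20\pi}{3}\in(4\pi,8\pi)$ so the solution count is $d_{1,1}^{\mathbf{A}_2}$; when $|S_2|=2$, $\rho_2=\tfrac{28\pi}{3}\in(8\pi,12\pi)$ so it is $d_{1,2}^{\mathbf{A}_2}$. Hence it suffices to show these two coefficients do not vanish.

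Finally I would evaluate Theorem~\ref{th1.4} together with \eqref{1.15}, inserting $\chi(M)=2$ (so $(1-x)^{\chi(M)-1}=1-x$), $K_{21}=-1$ (so $1+\cdots+x^{-K_{21}}=1+x$), $S_1=\emptyset$, and $\alpha_{p,2}=1$ (so each $1+\cdots+x^{\alpha_{p,2}}=1+x$ and $1+\cdots+x^{\alpha_{p,2}-K_{21}}=1+x+x^2$). A short computation gives, for $|S_2|=1$,
\[
g_1^{(2)}(x,\mathbf{A}_2)=(1-x^2)-(1-x)\big[(1+x)^2+(1+x+x^2)\big]=-1-x+2x^3,
\]
so $d_{1,1}^{\mathbf{A}_2}=-1$, and for $|S_2|=2$,
\[
g_1^{(2)}(x,\mathbf{A}_2)=(1-x)(1+x)^2-2(1-x)(1+x)(1+x+x^2)=-1-x-x^2+x^3+2x^4,
\]
so $d_{1,2}^{\mathbf{A}_2}=-1$. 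In each case the Leray--Schauder degree equals $-1\neq0$, and a nonzero degree forces the solution set to be nonempty, so $(1.14)$—equivalently $(1.18)$—has a solution. I expect the only real obstacle to be bookkeeping: reading off $(\rho_1,\rho_2)$ from $(1.19)$, assigning them to the correct cells $(i,j)=(1,1)$ and $(1,2)$ of the grid, and expanding $g_s$ from \eqref{1.15} without sign slips; once the relevant coefficient is seen to be $-1$, existence is immediate.
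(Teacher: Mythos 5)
Your proposal is correct and follows essentially the same route as the paper: identify $(\rho_1,\rho_2)$ from $(1.19)$ (getting $\rho_2=\tfrac{20\pi}{3}$ resp. $\tfrac{28\pi}{3}$, hence the cells $(1,1)$ and $(1,2)$), expand the generating function of Theorem \ref{th1.4} with $\chi(\mathbb{S}^2)=2$, $K_{21}=-1$, $S_1=\emptyset$, and read off $d_{1,1}^{\mathbf{A}_2}=d_{1,2}^{\mathbf{A}_2}=-1\neq0$, exactly matching the paper's computation. Your preliminary remark that Corollary \ref{cr1.2} is unavailable here and that one must check the specific coefficient is a fair observation, but otherwise there is nothing to add.
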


%
%

\begin{remark} We can also derive the expression of $\tilde{g}_1^{(2)}(x,\mathbf{K})$ in the following
\begin{align*}
\tilde{g}_1^{(2)}(x,\mathbf{K})=~&(1-x)^{\chi(M)-1}\Big[\prod_{p\in S_1}(1+x+\cdots+x^{\alpha_{p,1}})\\
&-\left(\chi(M)-|S_1\cup S_2|\right)(1+\cdots+x^{-K_{12}})\prod_{p\in S_1}(1+\cdots+x^{\alpha_{p,1}})\\
&-\sum_{p\in S_1\setminus S_2}(1+x+\cdots+x^{\alpha_{p,1}-K_{12}})\prod_{q\in S_1\setminus\{p\}}(1+x+\cdots+x^{\alpha_{q,1}})\Big].
\end{align*}
\end{remark}

This paper is organized as follows. In section 2, we derive the shadow system (\ref{1.1}) from the bubbling solutions of $(1.14)$ as $\rho_1$ tends to $4\pi$. In section 3, we prove the compactness of the solutions of (\ref{1.1}) in $\mathring{H}^1(M)\times [M\setminus S_1]$. In section 4, we obtain the result for the local mass when  there are collapsing singularities. In section 5, we study the deformation $(1.10)_t$, prove the compactness of the solutions, and derive the topological degree of (\ref{1.1}). In section 6, we state some applications of the degree formula of (\ref{1.1}).

\vspace{1cm}

\section{Shadow system with singular sources}\label{section2}
Let   $(u_{1k},u_{2k})\in\mathring{H}^1(M)\times\mathring{H}^1(M)$ be a solution  of  $(1.14)$  with $(\rho_{1k},\rho_{2k})$ such that  $\max_{M}(u_{1k},u_{2k})\rightarrow+\infty$ as $k\to+\infty$.
We  set
\begin{equation}
\label{2.1}\tilde{u}_{ik}:=u_{ik}-\ln\int_M h_ie^{u_{ik}}dv_g,~i=1,2.\end{equation} Then $(\tilde{u}_{1k},\tilde{u}_{2k})$
satisfy
\begin{equation}
\label{2.2}
\left\{\begin{array}{ll}
\Delta \tilde{u}_{1k}+2\rho_{1k}(h_1e^{\tilde{u}_{1k}}-1)+K_{12}\rho_{2k}(h_2e^{\tilde{u}_{2k}}-1)=0,\\
\Delta \tilde{u}_{2k}+2\rho_{2k}(h_2e^{\tilde{u}_{2k}}-1)+K_{21}\rho_{1k}(h_1e^{\tilde{u}_{1k}}-1)=0.
\end{array}\right.
\end{equation}  From   \eqref{2.1}, we see that
\begin{equation}\label{2.3}\int_M h_1e^{\tilde{u}_{1k}}dv_g=\int_M h_2e^{\tilde{u}_{2k}}dv_g=1. \end{equation}
We define the blow up set for $\tilde{u}_{ik}$
\begin{align}
\label{2.4}
\mathfrak{S}_{i}:=\{p\in M\mid \exists \{x_k\},~x_k\rightarrow p,~\tilde{u}_{ik}(x_k)\rightarrow+\infty\} \ \textrm{for}\ i=1,2,
\end{align}
and \[\mathfrak{S}:=\mathfrak{S}_1\cup\mathfrak{S}_2.\]
 For any $p\in M,$ we define the local mass by
\begin{equation}
\label{2.7}
\sigma_{i}(p)=\lim_{\delta\rightarrow0}\lim_{k\rightarrow+\infty}\frac{1}{2\pi}\int_{B_{\delta}(p)}\rho_{ik}h_ie^{\tilde{u}_{ik}}dv_g, \ i=1,2.
\end{equation}
We also denote $\gamma_i(p)$, $i=1,2$ such that \begin{equation}
\label{2.11}
\gamma_i(p)=\left\{\begin{array}{ll}
 \alpha_{p,i}\ \ \textrm{if}\ \ p\in S_i,\\
0\ \ \ \ \  \textrm{if}\ \ p\notin S_i.
\end{array}\right.
\end{equation}
It was proved in \cite{lwz} that $(\sigma_1(p),\sigma_2(p))$ satisfies the following Pohozaev identity:
\begin{equation}\begin{aligned}\label{poho}&K_{21}\sigma_1^2(p)+K_{12}K_{21}\sigma_1(p)\sigma_2(p)+K_{12}\sigma_2^2(p)\\=~&2K_{21}(1+\gamma_1(p))\sigma_1(p)
+2K_{12}(1+\gamma_2(p))\sigma_2(p),
\end{aligned}\end{equation} {where} $\textbf{K}=\textbf{A}_2$, $\textbf{B}_2$, $\textbf{G}_2$.
\medskip

For $\sigma_i(p)$, we have the following lemma.
\begin{lemma}\label{le2.1}$p\notin\mathfrak{S}$ if and only if $\sigma_1(p)=\sigma_2(p)=0$.
\end{lemma}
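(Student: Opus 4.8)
The plan is to prove the two implications separately: the direction $p\notin\mathfrak{S}\Rightarrow\sigma_1(p)=\sigma_2(p)=0$ is elementary, while the converse is the substantive one and rests on the Brezis--Merle alternative applied to the coupled system \eqref{2.2}.

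First I would treat $p\notin\mathfrak{S}\Rightarrow\sigma_1(p)=\sigma_2(p)=0$. Since $p\notin\mathfrak{S}=\mathfrak{S}_1\cup\mathfrak{S}_2$, by the definition \eqref{2.4} neither component has a blow-up sequence converging to $p$, so there are $\delta_0>0$ and $C>0$ with $\tilde u_{ik}\le C$ on $B_{\delta_0}(p)$ for $i=1,2$ and all large $k$. Recalling that each $h_i$ is bounded on $M$ (near a point of $S_i$ one has $h_i(x)\sim|x-p|^{2\alpha_{p,i}}$, so $h_i$ stays bounded), this yields $\rho_{ik}h_ie^{\tilde u_{ik}}\le C'$ on $B_{\delta_0}(p)$, whence
\[\frac{1}{2\pi}\int_{B_\delta(p)}\rho_{ik}h_ie^{\tilde u_{ik}}\,dv_g\le C''\delta^2\]
for every $\delta<\delta_0$; letting $k\to\infty$ and then $\delta\to0$ gives $\sigma_i(p)=0$.

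For the converse I would argue by contraposition: assuming $p\in\mathfrak{S}$, say $p\in\mathfrak{S}_1$, I show $(\sigma_1(p),\sigma_2(p))\ne(0,0)$. Suppose instead both local masses vanish. Since $\mathfrak{S}$ is finite (a standard consequence of the uniform bound $\int_Mh_ie^{\tilde u_{ik}}=1$), I may fix $\delta>0$ so small that $\overline{B_\delta(p)}$ meets no other blow-up point and $\lim_k\frac{1}{2\pi}\int_{B_\delta(p)}\rho_{ik}h_ie^{\tilde u_{ik}}\,dv_g<\eta$ for $i=1,2$, with $\eta$ to be fixed small. The reason both masses must be small is the coupling: writing
\[-\Delta\tilde u_{1k}=2\rho_{1k}(h_1e^{\tilde u_{1k}}-1)+K_{12}\rho_{2k}(h_2e^{\tilde u_{2k}}-1),\]
the triangle inequality bounds $\|\Delta\tilde u_{1k}\|_{L^1(B_\delta(p))}$ by a small multiple of the two local masses plus an $O(\delta^2)$ term, hence by a quantity $<4\pi$ once $\eta$ is small. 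I would then split $\tilde u_{1k}=v_{1k}+\phi_{1k}$ on $B_\delta(p)$ with $-\Delta v_{1k}=-\Delta\tilde u_{1k}$, $v_{1k}=0$ on $\partial B_\delta(p)$, and $\phi_{1k}$ harmonic. Brezis--Merle applied to $v_{1k}$ gives a uniform bound $\|e^{q|v_{1k}|}\|_{L^1(B_{\delta/2}(p))}\le C$ for some $q>1$, and $\phi_{1k}$ is bounded above on $B_{\delta/2}(p)$ by its boundary values. This would force $\tilde u_{1k}\le C$ near $p$, contradicting $p\in\mathfrak{S}_1$.

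The hard part will be the uniform control of the harmonic part $\phi_{1k}$ from above: one must choose $\delta$ so that $\partial B_\delta(p)$ stays away from the other blow-up points and then convert the global normalization $\int_Mh_ie^{\tilde u_{ik}}=1$ (which prevents $\tilde u_{1k}\to+\infty$ on average) into a pointwise upper bound on $\partial B_\delta(p)$, using a Harnack/gradient estimate on the annulus $B_\delta(p)\setminus B_{\delta/2}(p)$ where no concentration occurs. Once the boundary oscillation is controlled, the smallness of the total $L^1$ norm of the right-hand side---available precisely because $\sigma_1(p)=\sigma_2(p)=0$---lets the Brezis--Merle estimate close the contradiction.
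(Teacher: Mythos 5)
Your forward implication and the overall Brezis--Merle strategy for the converse match the paper, but the converse as you have structured it contains a genuine circularity. You isolate $p$ by choosing $\delta$ so that $\overline{B_\delta(p)}$ meets no other blow-up point, justifying this by the finiteness of $\mathfrak{S}$, which you call a standard consequence of $\int_Mh_ie^{\tilde{u}_{ik}}dv_g=1$. But the standard finiteness argument is precisely that every blow-up point carries a definite amount of local mass --- which is the contrapositive of the implication you are trying to prove; indeed the paper deduces $|\mathfrak{S}|<+\infty$ only \emph{after} Lemma \ref{le2.1}, from its proof. Without the lemma, a priori every neighbourhood of $p$ could contain further blow-up points (each of zero local mass), so neither the isolation of $p$ nor the pointwise upper bound for $\tilde{u}_{1k}$ on $\partial B_\delta(p)$ --- which you correctly flag as the hard part and which you derive from the annulus being concentration-free --- is actually available at this stage.

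The paper breaks this circle by never asking for pointwise boundary control. It writes $\tilde{u}_{ik}=\eta_{ik}+(\tilde{u}_{ik}-\eta_{ik})$ with $\eta_{ik}$ harmonic and equal to $\tilde{u}_{ik}$ on $\partial B_{r_0}(p)$ (the same decomposition as yours), and Brezis--Merle gives $\int_{B_{r_0}(p)}e^{(1+\delta)|\tilde{u}_{ik}-\eta_{ik}|}\le C_\delta$ exactly as in your sketch; but the bound $\eta_{ik}^+\le c$ on $B_{r_0/2}(p)$ is then obtained in a purely integral way: combining $\int_{B_{r_0}(p)}h_ie^{\tilde{u}_{ik}}dv_g\le 1$ with the exponential integrability of $|\tilde{u}_{ik}-\eta_{ik}|$ via H\"older yields $\int_{B_{r_0}(p)}e^{a\eta_{ik}}\le C$ for some $a>0$ (up to the harmless weight $h_i$), and the sub-mean-value property of the subharmonic function $e^{a\eta_{ik}}$ converts this into a pointwise upper bound on the half ball. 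This uses only the smallness of the mass in $B_{r_0}(p)$, not isolation of $p$ in $\mathfrak{S}$. If you replace your Harnack/boundary-oscillation step by this mean-value argument, your proof closes and essentially coincides with the paper's.
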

\begin{proof} The proof   is well-known now, and it  follows from the  Brezis-Merle's result \cite{bm}. We give a sketch here for convenience of readers.

 We  note that if $p\notin\mathfrak{S}$, then there is a neighborhood $U$ of $p$ such that $\tilde{u}_{1k}$ and $\tilde{u}_{2k}$ are uniformly bounded from above by a constant, independent of $k$. So we can get $\sigma_1(p)=\sigma_2(p)=0$ easily.

If $\sigma_1(p)=\sigma_2(p)=0$, then
 we can choose small $r_0>0$ such that
\begin{equation}
\label{2.6}
 \int_{B_{r_0}(p)}\rho_ih_i|e^{\tilde{u}_{ik}}-1|dx<\frac{\pi}{6},\ i=1,2.
\end{equation}
For $i=1,2$, let $\eta_{ik}$ be a harmonic function in $B_{r_0}(p)$ with  $\eta_{ik}=\tilde{u}_{ik}$ on $\partial B_{r_0}(p)$.
In view of \cite[Theorem 1]{bm} and \eqref{2.6}, we can find   some  constants $\delta, C_\delta>0$, independent of $k$, such that
\begin{equation}
\label{02.7}
\int_{B_{r_0}(p)}\exp((1+\delta)|\tilde{u}_{ik}-\eta_{ik}|)dx\leq C_\delta, \ i=1,2.
\end{equation}From the mean value theorem for harmonic function and  \eqref{2.6}-\eqref{02.7}, we can get a constant $c>0$, independent of $k$, such that
\begin{equation}
\label{2.8}
\eta_{ik}^+\leq c\ \textrm{in}\ B_{\frac{r_0}{2}}(p),\ i=1,2.
\end{equation}
By using the standard elliptic estimate and \eqref{02.7}-\eqref{2.8}, we can get  that $\tilde{u}_{1k}$ and $\tilde{u}_{2k}$ are uniformly bounded from above in
$B_{\frac{r_0}{2}}(p)$. Hence,   $p\notin\mathfrak{S}$ .
\end{proof}
From the proof of Lemma \ref{le2.1}, we see that if $p\in\mathfrak{S}$, then \eqref{2.6} does not hold. Then using  the fact $\int_Mh_ie^{\tilde{u}_{ik}}dv_g=1$,  we have
\[|\mathfrak{S}|<+\infty.\] Let $r_0>0$ be small enough such that $B_{4r_0}(p)\cap B_{4r_0}(q)=\emptyset$ for $p\neq q\in\mathfrak{S}$, and we have the following result.
\begin{lemma}\label{le2.2}For $1\le i\le 2$, \[p\notin\mathfrak{S}_i\ \textrm{ if and only if}\  \sigma_i(p)=0.\]
\end{lemma}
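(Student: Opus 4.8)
The plan is to upgrade Lemma \ref{le2.1} from the pair $(\sigma_1,\sigma_2)$ to each individual component, the new point being that the two equations in \eqref{2.2} are coupled. Since $\mathfrak{S}_i\subseteq\mathfrak{S}$ is finite, both implications are local near a single $p$. The direction $p\notin\mathfrak{S}_i\Rightarrow\sigma_i(p)=0$ is immediate: by definition of $\mathfrak{S}_i$ there are $r,C>0$ with $\tilde u_{ik}\le C$ on $B_r(p)$ for all large $k$, hence $\int_{B_\delta(p)}\rho_{ik}h_ie^{\tilde u_{ik}}dv_g\le Ce^{C}|B_\delta|\to0$ as $\delta\to0$, so $\sigma_i(p)=0$.

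For the converse I fix $i=1$ (the case $i=2$ is identical) and assume $\sigma_1(p)=0$; I will show $\tilde u_{1k}$ is bounded from above near $p$, which gives $p\notin\mathfrak{S}_1$. Since $\mathfrak{S}$ is finite I may pick $r_0>0$ so small that $\overline{B_{r_0}(p)}\cap\mathfrak{S}=\{p\}$; in particular $\partial B_{r_0}(p)\cap\mathfrak{S}_1=\emptyset$, so by compactness $\tilde u_{1k}\le C$ on $\partial B_{r_0}(p)$ uniformly in $k$. By $\sigma_1(p)=0$ I may also arrange $\int_{B_{r_0}(p)}2\rho_{1k}h_1e^{\tilde u_{1k}}dx<\varepsilon_0$ for $k$ large, where $\varepsilon_0$ is the Brezis--Merle threshold of \cite{bm}. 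I then split $\tilde u_{1k}=\eta_k+a_k+b_k$ on $B_{r_0}(p)$, where $\eta_k$ is harmonic with $\eta_k=\tilde u_{1k}$ on $\partial B_{r_0}(p)$, and
\[
-\Delta a_k=2\rho_{1k}h_1e^{\tilde u_{1k}},\qquad -\Delta b_k=K_{12}\rho_{2k}\big(h_2e^{\tilde u_{2k}}-1\big)-2\rho_{1k},
\]
both with zero boundary data.

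The three pieces are controlled separately. The harmonic part satisfies $\eta_k\le\max_{\partial B_{r_0}}\tilde u_{1k}\le C$ in $B_{r_0}(p)$ by the maximum principle. For $b_k$ the key observation is the sign of the off-diagonal Cartan entry: since $K_{12}<0$ and $h_2,\rho_{2k}>0$, the term $K_{12}\rho_{2k}h_2e^{\tilde u_{2k}}\le0$, so the forcing of $b_k$ is bounded above by the constant $-K_{12}\rho_{2k}-2\rho_{1k}$, and comparison with the corresponding Poisson problem yields $b_k\le C$ regardless of how large $h_2e^{\tilde u_{2k}}$ may be. Finally, since the $L^1$ forcing of $a_k$ is smaller than $\varepsilon_0$ and $a_k\ge0$, the argument of Lemma \ref{le2.1} (that is, \cite{bm} together with the estimate \eqref{02.7}) gives $\int_{B_{r_0}}e^{(1+\delta)a_k}dx\le C$. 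Combining, $\tilde u_{1k}\le C+a_k$, hence $h_1e^{\tilde u_{1k}}\le Ce^{a_k}$ is bounded in $L^{1+\delta}(B_{r_0}(p))$; the elliptic estimate $W^{2,1+\delta}\hookrightarrow L^\infty$ in dimension two then bootstraps $a_k$, and therefore $\tilde u_{1k}$, to a uniform upper bound on $B_{r_0/2}(p)$. Thus $p\notin\mathfrak{S}_1$.

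The main obstacle is precisely the coupling term $K_{12}\rho_{2k}(h_2e^{\tilde u_{2k}}-1)$: when $p\in\mathfrak{S}_2$ its $L^1$ norm on $B_{r_0}(p)$ need not be small, so Brezis--Merle cannot be applied to the full equation for $\tilde u_{1k}$ as in Lemma \ref{le2.1}. The resolution is structural rather than analytic: because $K_{12}<0$, this term acts as a nonpositive source and can only push $\tilde u_{1k}$ downward, so it is harmless for an upper bound. I expect the only points requiring care to be the uniform upper bound for $\tilde u_{1k}$ on $\partial B_{r_0}(p)$ used for the harmonic piece, and the clean separation of the small self-interaction part $a_k$ from the sign-definite coupling part $b_k$.
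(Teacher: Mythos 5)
Your proof is correct and follows essentially the same route as the paper: the paper combines your harmonic piece $\eta_k$ and coupling piece $b_k$ into a single auxiliary function $\phi_k$ solving $\Delta\phi_k+K_{ij}\rho_{jk}h_je^{\tilde u_{jk}}=0$ with boundary data $\tilde u_{ik}$, which is subharmonic precisely because $K_{ij}<0$ and hence bounded above by the maximum principle, and then applies Brezis--Merle to the remainder $\tilde u_{ik}-\phi_k$ exactly as you do with $a_k$. The key observation --- that the off-diagonal Cartan entry has a sign making the coupling term harmless for an upper bound --- is the same in both arguments, so the difference is only in bookkeeping.
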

\begin{proof} If $p\notin\mathfrak{S}_i$, then there is a neighborhood $U$ of $p$ such that $\tilde{u}_{ik}$ is uniformly bounded from above by a constant, independent of $k$. So we  get $\sigma_i(p)=0$.

 Now we suppose that $\sigma_i(p)=0$. There is a constant $c>0$, independent of $k$, such that
\begin{equation}\sup_{\partial B_{r_0}(p)}\tilde{u}_{ik}\le c.\label{2.14}\end{equation}
Let $1\le j\neq i\le 2$ and  $\phi_k$ satisfy $\Delta \phi_k+K_{ij}\rho_{jk}h_je^{\tilde{u}_{jk}}=0$ in $B_{r_0}(p)$ and $\phi_k=\tilde{u}_{ik}$ on $\partial B_{r_0}(p)$.
The maximum principle and \eqref{2.14} imply that $\phi_k$ is uniformly bounded from above in $B_{r_0}(p)$.
We note that $\hat{u}_{ik}=\tilde{u}_{ik}-\phi_k$ satisfies
\begin{equation*}
\left\{\begin{array}{ll}
\Delta \hat{u}_{ik}+2\rho_{ik}(h_ie^{\phi_k}e^{\hat{u}_{ik}}-1)-K_{ij}\rho_{jk}=0\  \textrm{in}\  B_{r_0}(p),\\
\hat{u}_{ik}=0 \  \textrm{on}\   \partial B_{r_0}(p).
\end{array}\right.
\end{equation*}
By applying \cite[Theorem 1]{bm} to $\hat{u}_{ik}=\tilde{u}_{ik}-\phi_k$ as in Lemma \ref{le2.1}, we get that $\tilde{u}_{ik}$ is uniformly bounded from above in $B_{r_0}(p)$. Therefore, $p\notin \mathfrak{S}_i$.
\end{proof}
 \begin{remark}\label{re1}
 In view of  the Green representation formula and the elliptic estimates, it is easy to see that
for any compact set $K\subset\subset M\setminus\mathfrak{S}$, there is a constant $C_K>0$,  independent of $k$,
 satisfying
\begin{align}
\label{2.9}
\|u_{ik}\|_{L^\infty(K)}\leq C_K\ \ \textrm{for all} \ \ k\ge1,\ i=1,2,
\end{align}
and
\begin{align}
\label{2.10}
|\tilde{u}_{ik}(x)-\tilde{u}_{ik}(y)|\leq C_K\ \ \textrm{for all} \ \ k\ge1,\ \ \mbox{any}\ x, y\in K, \ i=1,2.
\end{align}
\end{remark}
Since we assume that   $\max_{M}(u_{1k},u_{2k})\rightarrow+\infty$,  \eqref{2.9} implies $\max_{M}(\tilde{u}_{1k},\tilde{u}_{2k})\rightarrow+\infty$, that is, \[\mathfrak{S}=\mathfrak{S}_1\cup\mathfrak{S}_2\neq\emptyset.\]
For $p\in\mathfrak{S}$, we have the following result.
\begin{lemma}\label{atleast}
For $p\in\mathfrak{S}$, we have \[\textrm{either}\ 2\sigma_1(p)-2\gamma_1(p)+K_{12}\sigma_2(p)\ge 2\ \textrm{or}\ 2\sigma_2(p)-2\gamma_2(p)+K_{21}\sigma_1(p)\ge 2.\]
\end{lemma}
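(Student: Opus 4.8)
The plan is to reduce the stated dichotomy to a scalar Brezis--Merle small-mass ($\varepsilon$-regularity) criterion applied to whichever component actually blows up. Since $p\in\mathfrak{S}=\mathfrak{S}_1\cup\mathfrak{S}_2$, Lemma~\ref{le2.2} gives $\sigma_i(p)>0$ for at least one index $i$, i.e. $p\in\mathfrak{S}_i$. I would then prove the corresponding $i$-th alternative $2\sigma_i(p)-2\gamma_i(p)+K_{ij}\sigma_j(p)\ge 2$ (with $\{i,j\}=\{1,2\}$); the ``either/or'' in the statement is precisely the fact that we cannot know in advance which component blows up.

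Fix $r_0>0$ small so that $B_{r_0}(p)$ contains no other point of $\mathfrak{S}$. The first step is to decouple the $i$-th equation of \eqref{2.2} on $B_{r_0}(p)$ by absorbing the off-diagonal coupling. Let $w_{ik}$ solve $\Delta w_{ik}=(2\rho_{ik}+K_{ij}\rho_{jk})-K_{ij}\rho_{jk}h_je^{\tilde{u}_{jk}}$ in $B_{r_0}(p)$ with $w_{ik}=0$ on $\partial B_{r_0}(p)$, and set $v_{ik}=\tilde{u}_{ik}-w_{ik}$. Because $K_{ij}<0$, the term $-K_{ij}\rho_{jk}h_je^{\tilde{u}_{jk}}$ is a nonnegative measure which, by the definition \eqref{2.7} of $\sigma_j(p)$, concentrates mass $-K_{ij}\,2\pi\sigma_j(p)$ at $p$; hence $w_{ik}$ is bounded above and near $p$ has the logarithmic behavior $w_{ik}(x)\approx -K_{ij}\sigma_j(p)\log|x-p|\to-\infty$ (the coefficient $-K_{ij}\sigma_j(p)$ being $\ge0$), so that $e^{w_{ik}}$ vanishes like $|x-p|^{-K_{ij}\sigma_j(p)}$. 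Consequently $v_{ik}$ solves a single weighted Liouville equation $\Delta v_{ik}+2\rho_{ik}\tilde{h}_{ik}e^{v_{ik}}=0$, whose effective weight $\tilde{h}_{ik}=h_ie^{w_{ik}}$ behaves near $p$ like $|x-p|^{2\tilde\gamma_i}$ with effective exponent $\tilde\gamma_i=\gamma_i(p)-\tfrac12 K_{ij}\sigma_j(p)\ge 0$.

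The second step records that $v_{ik}$ still blows up at $p$ (as $w_{ik}$ is bounded above), while $\tilde{h}_{ik}e^{v_{ik}}=h_ie^{\tilde{u}_{ik}}$ gives $\frac{1}{2\pi}\int_{B_\delta(p)}2\rho_{ik}\tilde{h}_{ik}e^{v_{ik}}\to 2\sigma_i(p)$. I would then invoke the Brezis--Merle threshold in its weighted form (\cite{bm}, together with the singular extension of \cite{bt1}): a blowing-up solution of such a weighted Liouville equation must carry local mass at least $4\pi(1+\tilde\gamma_i)$, for otherwise $v_{ik}$ would be bounded above near $p$, exactly as in the proofs of Lemmas~\ref{le2.1}--\ref{le2.2}. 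This yields $2\sigma_i(p)\ge 2(1+\tilde\gamma_i)=2+2\gamma_i(p)-K_{ij}\sigma_j(p)$, which rearranges to the desired $i$-th inequality.

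The hard part will be making the $\varepsilon$-regularity argument of the second step uniform in $k$, because the weight $\tilde{h}_{ik}$ is genuinely $k$-dependent and concentrating: its singular exponent $\tilde\gamma_i$ emerges only in the limit, while for finite $k$ the function $w_{ik}$ is smooth with very large gradient near $p$. To control this I would use the Green representation of $w_{ik}$ on $B_{r_0}(p)$ and split the coupling measure into its restriction to $B_\delta(p)$, which produces the logarithmic singularity with coefficient $-K_{ij}\sigma_j(p)+o(1)$ as $k\to\infty$ and $\delta\to0$, and its part away from $p$, which is harmonic and uniformly bounded on $B_\delta(p)$. The resulting two-sided control $w_{ik}(x)=\big(-K_{ij}\sigma_j(p)+o(1)\big)\log|x-p|+O(1)$ reproduces the effective exponent $\tilde\gamma_i$ up to $o(1)$ and legitimizes the weighted threshold at level $2(1+\tilde\gamma_i)$; establishing it carefully — now tracking the precise concentrated mass rather than merely its positivity as in Lemma~\ref{le2.2} — is the technical core of the argument.
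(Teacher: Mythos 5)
Your reduction is not the one the paper uses, and it contains a genuine gap at its core. The paper proves the lemma purely algebraically from the Pohozaev identity \eqref{poho}: writing its left-hand side as $K_{21}\sigma_1(p)\,a_1+K_{12}\sigma_2(p)\,a_2$ with $a_i=2\sigma_i(p)-2-2\gamma_i(p)+K_{ij}\sigma_j(p)$, and observing that the quadratic form $K_{21}x^2+K_{12}K_{21}xy+K_{12}y^2$ is negative definite (since $K_{12},K_{21}<0$ and $4-K_{12}K_{21}>0$) while $(\sigma_1(p),\sigma_2(p))\neq(0,0)$ by Lemma \ref{le2.1}, one sees that if both $a_1<0$ and $a_2<0$ then the right-hand side is nonnegative and the left-hand side is strictly negative, a contradiction. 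No PDE estimate beyond \eqref{poho} is needed.

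The gap in your argument is the claim that the inequality holds for whichever index $i$ has $p\in\mathfrak{S}_i$, via a weighted Brezis--Merle threshold $4\pi(1+\tilde\gamma_i)$ with $\tilde\gamma_i=\gamma_i(p)-\tfrac12K_{ij}\sigma_j(p)$. This per-component statement is false. Take $\mathbf{K}=\mathbf{A}_2$, $S_1=S_2=\emptyset$ and the admissible local mass $(\sigma_1(p),\sigma_2(p))=(2,4)$ listed after Theorem D: it satisfies \eqref{poho}, and $p\in\mathfrak{S}_1$ by Lemma \ref{le2.2}, yet $2\sigma_1(p)+K_{12}\sigma_2(p)=0<2$, i.e.\ $\sigma_1(p)=2<1+\tilde\gamma_1=3$; only the second alternative of the lemma holds there. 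The reason your threshold fails is that the ``effective singularity'' of strength $-\tfrac12K_{ij}\sigma_j(p)$ is produced by the concentration of the \emph{other} component, which for finite $k$ lives at its own (typically nested, $k$-dependent) scale around $p$; the bubble of $\tilde u_{ik}$ may therefore see only part of $\sigma_j(p)$, and the two-sided expansion $w_{ik}=\bigl(-K_{ij}\sigma_j(p)+o(1)\bigr)\log|x-p|+O(1)$ you propose is not available at the scale where $\tilde u_{ik}$ concentrates. Even for a genuinely fixed weight $|x-p|^{2\gamma}$, the Brezis--Merle bootstrap with a bounded coefficient only yields the threshold $4\pi$; upgrading it to $4\pi(1+\gamma)$ requires knowing the mass concentrates at $p$ itself rather than at nearby collapsing points, which is exactly what can fail here (and is the phenomenon the paper must confront separately in Section 4). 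The correct statement is only the disjunction, and it comes out of \eqref{poho} as above.
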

\begin{proof}From \eqref{poho}, we get that
\begin{equation}\begin{aligned}\label{frompoho1}&K_{21}\sigma_1^2(p)+K_{12}K_{21}\sigma_1(p)\sigma_2(p)+K_{12}\sigma_2^2(p)\\
=~&K_{21}\sigma_1(p)\Big(2\sigma_1(p)-2-2\gamma_1(p)+K_{12}\sigma_2(p)\Big)\\
&+K_{12}\sigma_2(p)\Big(2\sigma_2(p)
-2-2\gamma_2(p)+K_{21}\sigma_1(p)\Big).
\end{aligned}\end{equation}
Since $4-K_{21}K_{12}>0$ and $K_{12}, K_{21}<0$, we see
\begin{equation}\begin{aligned}\label{disc}0&>K_{21}\Big(\sigma_1+\frac{K_{12}\sigma_2(p)}{2}\Big)^2+K_{12}\Big(\frac{4-K_{21}K_{12}}{4}\Big)\sigma_2^2(p)
\\&=K_{21}\Big(\sigma_1^2(p)+K_{12}\sigma_1(p)\sigma_2(p)+\frac{K_{12}^2\sigma_2^2(p)}{4}\Big)+K_{12}\Big(\frac{4-K_{21}K_{12}}{4}\Big)\sigma_2^2(p)
\\&=K_{21}\sigma_1^2(p)+K_{12}K_{21}\sigma_1(p)\sigma_2(p)+K_{12}\sigma_2^2(p).
\end{aligned}\end{equation}
  If $2\sigma_1(p)-2\gamma_1(p)+K_{12}\sigma_2(p)< 2$  and $2\sigma_2(p)-2\gamma_2(p)+K_{21}\sigma_1(p)< 2$, then we get a contradiction from \eqref{frompoho1} and \eqref{disc}. So we complete the proof of Lemma \ref{atleast}.
\end{proof}
In \cite[Lemma 2.1]{llwy}, it was proved that if $S_1\cup S_2=\emptyset$, then     a  weak  concentration phenomena holds (i.e.
 if a sequence of solutions $(u_{1k},u_{2k})$ of $(1.14)$ blows up, then one of  $\frac{h_ie^{u_{ik}}}{\int_Mh_ie^{u_{ik}}dv_g}$, $i=1,2$, tends to a sum of Dirac measures).
 Now we are going to extend this result to the general cases.
\begin{lemma}\label{weakconcentration}
 If $p\in\mathfrak{S}$ and  $2\sigma_1(p)-2\gamma_1(p)+K_{12}\sigma_2(p)\ge 2$,  then
\begin{align}\label{2.13}
  \tilde{u}_{1k}\to-\infty\ \mbox{uniformly in any compact subset of} \  B_{r_0}(p)\setminus\{p\}\ \textrm{as} \ k\to+\infty.
\end{align}
\end{lemma}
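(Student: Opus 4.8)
The plan is to reduce the first equation of \eqref{2.2} to a single Liouville-type equation with a nonnegative, uniformly bounded potential, and then invoke the Brezis--Merle alternative \cite{bm}, exactly in the spirit of the proofs of Lemma \ref{le2.1} and Lemma \ref{le2.2}. The only genuine difficulty is the coupling term $K_{12}\rho_{2k}h_2e^{\tilde{u}_{2k}}$: since $K_{12}<0$, it is a \emph{negative} source that may itself carry a concentrating mass at $p$, so the equation for $\tilde{u}_{1k}$ is not of the form $-\Delta u=Ve^{u}$ with $V\ge 0$ and cannot be fed directly into Brezis--Merle. I will absorb this term into an auxiliary potential.

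First I record two preliminary facts. By the choice of $r_0$, the ball $B_{r_0}(p)$ contains no blow up point other than $p$, so by Lemma \ref{le2.2} both $\tilde{u}_{1k}$ and $\tilde{u}_{2k}$ are bounded from above on $\partial B_{r_0}(p)$ and on every compact subset of $B_{r_0}(p)\setminus\{p\}$, uniformly in $k$. Next, the hypothesis $2\sigma_1(p)-2\gamma_1(p)+K_{12}\sigma_2(p)\ge 2$ together with $K_{12}<0$ and $\sigma_2(p)\ge 0$ forces $2\sigma_1(p)\ge 2+2\gamma_1(p)>0$, hence $\sigma_1(p)>0$; by Lemma \ref{le2.2} this means $p\in\mathfrak{S}_1$, i.e. $\tilde{u}_{1k}$ genuinely blows up at $p$. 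This is precisely where the quantitative hypothesis enters: it guarantees that the reduced equation falls into the blow up alternative rather than the bounded one.

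Now let $\phi_k$ solve $\Delta\phi_k+K_{12}\rho_{2k}h_2e^{\tilde{u}_{2k}}=0$ in $B_{r_0}(p)$ with $\phi_k=0$ on $\partial B_{r_0}(p)$. Because $K_{12}<0$, $\phi_k$ is subharmonic and therefore $\phi_k\le 0$ throughout $B_{r_0}(p)$. Setting $\hat{u}_{1k}=\tilde{u}_{1k}-\phi_k$, a direct computation using \eqref{2.2} gives
\[\Delta\hat{u}_{1k}+2\rho_{1k}h_1e^{\phi_k}e^{\hat{u}_{1k}}=2\rho_{1k}+K_{12}\rho_{2k}\quad\text{in }B_{r_0}(p).\]
Absorbing the bounded constant right-hand side into a function $\psi_k$ solving $\Delta\psi_k=2\rho_{1k}+K_{12}\rho_{2k}$ (say $\psi_k=\tfrac{1}{4}(2\rho_{1k}+K_{12}\rho_{2k})|x-p|^2$, bounded in $C^2$ uniformly in $k$) and writing $w_k=\hat{u}_{1k}-\psi_k$, I obtain
\[-\Delta w_k=\tilde{V}_k e^{w_k},\qquad \tilde{V}_k:=2\rho_{1k}h_1e^{\phi_k+\psi_k},\]
where $0\le \tilde{V}_k\le C$ (using $\phi_k\le 0$ and the boundedness of $h_1,\psi_k$) and $\int_{B_{r_0}(p)}\tilde{V}_ke^{w_k}=\int_{B_{r_0}(p)}2\rho_{1k}h_1e^{\tilde{u}_{1k}}\le C$ by \eqref{2.3}. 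Thus $w_k$ satisfies the hypotheses of the Brezis--Merle alternative.

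Finally I identify the correct alternative. Since $\phi_k\le 0$ and $\psi_k$ is bounded, $w_k(x_k)\ge\tilde{u}_{1k}(x_k)-C\to+\infty$ along a sequence $x_k\to p$ realizing the blow up of $\tilde{u}_{1k}$; hence $w_k$ is unbounded above and the only possibility is the concentration alternative. Moreover $\tilde{V}_ke^{w_k}=2\rho_{1k}h_1e^{\tilde{u}_{1k}}$ concentrates precisely at the blow up points of $\tilde{u}_{1k}$ inside $B_{r_0}(p)$, which is the single point $p$; therefore the blow up set of $w_k$ is exactly $\{p\}$ and $w_k\to-\infty$ uniformly on compact subsets of $B_{r_0}(p)\setminus\{p\}$. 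Since $\tilde{u}_{1k}=w_k+\psi_k+\phi_k\le w_k+C$, this yields \eqref{2.13}. The main obstacle is the bookkeeping around the negative coupling term: arranging that the reduced potential $\tilde{V}_k$ stays nonnegative and bounded, and that the reduced mass still concentrates only at $p$ even when $h_1$ vanishes there (the case $p\in S_1$). Once this reduction is in place, the conclusion is immediate from Brezis--Merle.
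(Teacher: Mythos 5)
Your reduction of the first equation of \eqref{2.2} to $-\Delta w_k=\tilde V_k e^{w_k}$ with $0\le \tilde V_k\le C$ is correct as far as it goes: the sign argument giving $\phi_k\le 0$, and the identity $\int \tilde V_k e^{w_k}=\int 2\rho_{1k}h_1e^{\tilde{u}_{1k}}\le C$ from \eqref{2.3}, both check out. The gap is in the last step. The version of the Brezis--Merle alternative \cite{bm} that concludes ``$w_k\to-\infty$ locally uniformly away from the blow-up set'' requires, in addition to $0\le \tilde V_k\le C$, a uniform bound on $\int e^{w_k}$ (or $\|e^{w_k}\|_{L^{q'}}$ with $\|\tilde V_k\|_{L^q}$ bounded); a bound on $\int \tilde V_k e^{w_k}$ alone does not suffice. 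Here $e^{w_k}=e^{\tilde{u}_{1k}}e^{-\phi_k-\psi_k}$, and when $\sigma_2(p)>0$ the factor $e^{-\phi_k}$ blows up near $p$ like $|x-p|^{K_{12}\sigma_2(p)}$ (a negative power), while $h_1$ itself vanishes at $p$ when $p\in S_1$; so the reduced potential $\tilde V_k$ degenerates precisely at the blow-up point and $\int e^{w_k}\le C$ is exactly what is unavailable. This is the regime in which blow up need \emph{not} imply concentration --- the phenomenon emphasized in the introduction via \cite{lt} and Theorem \ref{thmeven} --- so the concentration conclusion cannot be extracted from the soft alternative.

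The tell-tale sign is that your argument uses the hypothesis $2\sigma_1(p)-2\gamma_1(p)+K_{12}\sigma_2(p)\ge 2$ only to deduce $\sigma_1(p)>0$; if that were enough, every blow-up point of $\tilde{u}_{1k}$ would automatically be a concentration point, which is false for degenerate potentials. The hypothesis must enter quantitatively, and the paper's proof shows how: assume \eqref{2.13} fails, so that $\tilde{u}_{1k}\to\xi_1$ in $C^2_{\textrm{loc}}(B_{r_0}(p)\setminus\{p\})$ with $h_1e^{\xi_1}\in L^1$; the Green representation then yields $\xi_1(x)\ge -(2\sigma_1(p)+K_{12}\sigma_2(p))\ln|x-p|+c$ near $p$ (after controlling the contribution of the $h_2e^{\xi_2}$ term, which is where the case analysis on $\tilde{u}_{2k}$ and, in the hardest subcase, the combination $2\xi_2-K_{21}\xi_1$ are needed), and local integrability of $h_1e^{\xi_1}\sim |x-p|^{2\gamma_1(p)}e^{\xi_1}$ forces $2+2\gamma_1(p)-2\sigma_1(p)-K_{12}\sigma_2(p)>0$, contradicting the hypothesis. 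Your decomposition is a reasonable preliminary step (it is essentially the one used in Lemma \ref{le2.2}, where only an upper bound was needed), but the concentration claim requires this Green-representation/integrability input rather than a citation of the Brezis--Merle alternative.
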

\begin{proof}
To prove  \eqref{2.13}, we argue by
contradiction. Then for any fixed $r_1\in(0,r_0)$, we see that $\sup_{r_1\le |x-p|\le r_0}\tilde{u}_{1k}$ is uniformly bounded from below by some constant depending on $r_1$, not on $k$.  From \eqref{2.10},
$\inf_{r_1\le |x-p|\le r_0}\tilde{u}_{1k}$ is also uniformly bounded from below.  Together,   we get  that
 $\sup_{r_1\le |x-p|\le r_0}|\tilde{u}_{1k}|$ is  uniformly bounded. Since $r_1>0$ is arbitrary,  the standard elliptic estimates implies that $\tilde{u}_{1k}\to\xi_1$ in $C^2_{\textrm{loc}}(B_{r_0}(p)\setminus\{p\})$ as $k\to+\infty$
 for some function $\xi_1$. Since $\int_Mh_1e^{\tilde{u}_{1k}}dv_g=1$, we also  see that $h_1e^{\xi_1}\in L^1(B_{r_0}(0))$.
 We need to consider the following two cases according to the asymptotic behavior of $\tilde{u}_{2k}$.

 \medskip

\noindent Case 1.  $\sup_{r\le |x-p|\le r_0}\tilde{u}_{2k}\to-\infty$ as $k\to+\infty$ for any fixed $r\in(0,r_0]$.

  \medskip
 \noindent
By using $\tilde{u}_{1k}\to\xi_1$ in $C^2_{\textrm{loc}}(B_{r_0}(p)\setminus\{p\})$ as $k\to+\infty$,  we see that  $\xi_1$ satisfies
\[
\Delta \xi_1+2\rho_1(h_1e^{\xi_1}-1)=  -2\pi(2\sigma_1(p)+K_{12}\sigma_2(p))\delta_p\ \mbox{in}\  B_{r_0}(p).
\]
 By using Green representation formula, we get that for $x\in B_{r_0}(p)$,
 \begin{equation}\begin{aligned}\label{2.16}
 \xi_1(x)=&-(2\sigma_1(p)+K_{12}\sigma_2(p))\ln|x-p|\\&+\frac{1}{2\pi}\int_{B_{r_0(p)}}\ln\frac{1}{|x-y|} 2\rho_1(h_1e^{\xi_1(y)}-1)dy
 \\&+\int_{\partial B_{r_0(p)}}\Big[\frac{\xi_1(y)}{2\pi}\frac{\partial \ln|x-y|}{\partial\nu}-\frac{1}{2\pi}\ln|x-y|\frac{\partial \xi_1}{\partial\nu}\Big]dS.
\end{aligned}
\end{equation}  Let $\eta(x)=\int_{\partial B_{r_0(p)}}\Big[\frac{\xi_1(y)}{2\pi}\frac{\partial \ln|x-y|}{\partial\nu}-\frac{1}{2\pi}\ln|x-y|\frac{\partial \xi_1}{\partial\nu}\Big]dS$. Then we see that $\eta\in C^1(B_{\frac{r_0}{2}}(p))$.   Since $h_1e^{\xi_1}\in L^1(B_{r_0}(p))$,   if $|x-p|\le\frac{r_0}{2}$,  then
 \begin{equation*}\begin{aligned}
 \xi_1(x)\ge&-(2\sigma_1(p)+K_{12}\sigma_2(p))\ln|x-p|+\frac{(-\ln{2r_0})}{2\pi} \|2\rho_1h_1e^{\xi_1}\|_{B_{r_0(p)}}
 \\&+\frac{  (-\rho_1)\|\ln {|y|}\|_{L^1(B_{2r_0}(0)) }}{\pi}+\eta(x)\ge-(2\sigma_1(p)+K_{12}\sigma_2(p))\ln|x-p|+ c,
\end{aligned}
\end{equation*}where $c$ is a constant, independent of $x\in B_{\frac{r_0}{2}}(p)$.\\
In view of  $h_1e^{\xi_1}\in L^1(B_{r_0}(p))$, $h_1(x)=h_1^*(x)e^{-\sum_{q\in S_1}4\pi\gamma_1(q)G(x,q)}$, and $h_1^*>0$, we get that \begin{equation}\label{2.17}2+2\gamma_1(p)-2\sigma_1(p)-K_{12}\sigma_2(p)>0, \end{equation}which contradicts   the assumption in Lemma \ref{weakconcentration}.

 \medskip

\noindent Case 2. $\sup_{r\le |x-p|\le r_0}\tilde{u}_{2k}$ is uniformly bounded from below for each  $r\in(0,r_0]$.

 \medskip\noindent

 From the similar arguments in Case 1, there is a function $\xi_2$ satisfying  $\tilde{u}_{2k}\to\xi_2$ in $C^2_{\textrm{loc}}(B_{r_0}(p)\setminus\{p\})$ as $k\to+\infty$ and
 $h_2e^{\xi_2}\in L^1(B_{r_0}(0))$. On $B_{r_0}(p)$, we have
\begin{equation*}
\left\{\begin{array}{ll}
\Delta \xi_1+2\rho_1(h_1e^{\xi_1}-1)+K_{12}\rho_2(h_2e^{\xi_2}-1) =-2\pi(2\sigma_1(p)+K_{12}\sigma_2(p))\delta_p,\\
\Delta \xi_2+2\rho_2(h_2e^{\xi_2}-1)+K_{21} \rho_1(h_1e^{\xi_1}-1) =-2\pi(2\sigma_2(p)+K_{21}\sigma_1(p))\delta_p.\\
\end{array}\right.
\end{equation*}
Now we consider the following two cases (i)-(ii) according to the value of $2\sigma_2(p)-2\gamma_2(p)+K_{21}\sigma_1(p)$:

\medskip

\noindent (i)  If $2\sigma_2(p)-2\gamma_2(p)+K_{21}\sigma_1(p)<2$.
\medskip

\noindent At first, we claim that \begin{equation}h_2e^{\xi_2}\in L^{1+\delta_0}(B_{\tau_0}(p))\ \ \textrm{for some} \ \delta_0>0,\ \tau_0\in(0,\frac{r_0}{2}).\label{2.18}\end{equation}
Let \[\zeta_2={\xi}_2+(2\sigma_2(p)+K_{21}\sigma_1(p))\ln|x-p|.\] Then $\zeta_2$ satisfies
\begin{equation*}
\Delta \zeta_2+2\rho_2(h_2e^{{\xi}_2}-1)+K_{21} \rho_1(h_1e^{\xi_1}-1)=0.
\end{equation*}
We note that for any small $r>0$, there is a constant $c_r>0$ such that
\[\sup_{\partial B_{r}(p)}\zeta_2\le c_r.\]
Since $h_ie^{\xi_i}\in L^1(B_{r_0}(p))$ for $i=1,2$, by using \cite[Theorem 1]{bm} as in Lemma \ref{le2.1}, we see that for any $\delta>0$, there are constants $C_\delta>0$  and $\tau_{\delta}\in(0,\frac{r_0}{2})$ satisfying
\begin{equation}
\label{2.19}
\int_{B_{\tau_{\delta}}(p)}\exp((1+\delta)|\zeta_2|)dv_g\leq C_\delta.
\end{equation} We recall $h_2(x)=h_2^*(x)e^{-\sum_{q\in S_2}4\pi\gamma_2(q)G(x,q)}$ where $h_2^*>0$.
By using \eqref{2.19}  and $2\sigma_2(p)-2\gamma_2(p)+K_{21}\sigma_1(p)<2$, we see that there are  constants $\delta_0>0$, $\tau_0>0$ and a positive function $\bar{h}_2$  such that   \begin{equation*}h_2(x)e^{\xi_2(x)}=\bar{h}_2(x)|x-p|^{-2\sigma_2(p)+2\gamma_2(p)-K_{21}\sigma_1(p)}e^{\zeta_2}\in L^{1+\delta_0}(B_{\tau_0}(p)),\end{equation*} which implies the claim \eqref{2.18}.

 By   Green representation formula as in \eqref{2.16}  and $h_2e^{\xi_2}\in L^{1+\delta_0}(B_{\tau_0}(p))$,  we get a constant $c$, independent of $x\in B_{\tau_0}(p)$, satisfying \begin{equation*}\begin{aligned}
 \xi_1(x)\ge - (2\sigma_1(p)+K_{12}\sigma_2(p))\ln|x-p|+ c.
\end{aligned}
\end{equation*}
In view of  $h_1e^{\xi_1}\in L^1(B_{r_0}(p))$, $h_1(x)=h_1^*(x)e^{-\sum_{q\in S_1}4\pi\gamma_1(q)G(x,q)}$, and $h_1^*>0$, we get that $2+2\gamma_1(p)
-  2\sigma_1(p)-K_{12}\sigma_2(p)>0,$ which contradicts  the assumption in Lemma \ref{weakconcentration}.

\medskip

\noindent (ii) If $2\sigma_2(p)-2\gamma_2(p)+K_{21}\sigma_1(p)\ge 2$.
\medskip

\noindent On $B_{r_0}(p)$, we  have
\begin{equation*}\begin{aligned}
&\Delta \Big(2\xi_2-K_{21}\xi_1\Big)+\rho_2(4-K_{12}K_{21})(h_2e^{\xi_2}-1)\\=&-2\pi\Big[2(2\sigma_2(p)+K_{21}\sigma_1(p))-K_{21}(2\sigma_1(p)+K_{12}\sigma_2(p))\Big]\delta_p.
\end{aligned}\end{equation*}
 By   Green representation formula as in \eqref{2.16},  $h_2e^{\xi_2}\in L^1(B_{\tau}(p))$ and $4-K_{12}K_{21}>0$,
   we get a constant $c$, independent of $x\in B_{\tau}(p)$, satisfying
    \begin{equation*}\begin{aligned}
&2 \xi_2(x) -K_{21} \xi_1(x)  \\&\ge   \Big[-2(2\sigma_2(p) +K_{21}\sigma_1(p))+K_{21}(2\sigma_1(p) +K_{12}\sigma_2(p))\Big] \ln|x-p|+ c.
\end{aligned}
\end{equation*}Then we see that \begin{equation*}\begin{aligned}
&(2-K_{21}) \max\Big\{\xi_1(x)+2\gamma_1(p)\ln|x-p|,\ \xi_2(x)+2\gamma_2(p)\ln|x-p|\Big\}\\&\ge 2\Big\{\xi_2(x)+2\gamma_2(p)\ln|x-p|\Big\}-K_{21}\Big\{\xi_1(x)+2\gamma_1(p)\ln|x-p|\Big\} \\&\ge   \Big\{-2(2\sigma_2(p)-2\gamma_2(p)+K_{21}\sigma_1(p))\\&\quad+K_{21}(2\sigma_1(p)-2\gamma_1(p)+K_{12}\sigma_2(p))\Big\} \ln|x-p|+ c.
\end{aligned}
\end{equation*}
Then there is a constant $C>0$, independent of $x\in B_{\tau}(p)$, satisfying
\begin{equation*}\begin{aligned}
  &|x-p|^{2\gamma_1(p)}e^{\xi_1(x)}+|x-p|^{2\gamma_2(p)}e^{\xi_2(x)}  \\&\ge   e^{\max\{\xi_1(x)+2\gamma_1(p)\ln|x-p|,\xi_2(x)+2\gamma_2(p)\ln|x-p|\}}\\&  \ge C|x-p|^{\frac{-2(2\sigma_2(p)-2\gamma_2(p)+K_{21}\sigma_1(p))+K_{21}(2\sigma_1(p)-2\gamma_1(p)+K_{12}\sigma_2(p))}{2-K_{21}}},
\end{aligned}
\end{equation*}
In view of  $h_ie^{\xi_i}\in L^1(B_{r_0}(p))$, $h_i(x)=h_i^*(x)e^{-\sum_{q\in S_i}4\pi\gamma_i(q)G(x,q)}$, and $h_i^*>0$, we get that \begin{equation*}2-\frac{2(2\sigma_2(p)-2\gamma_2(p)+K_{21}\sigma_1(p))-K_{21}(2\sigma_1(p)-2\gamma_1(p)+K_{12}\sigma_2(p))}{2-K_{21}}>0.\end{equation*}
Since we assume that  $2\sigma_i(p)-2\gamma_i(p)+K_{ij}\sigma_j(p)\ge 2$ where $1\le i\neq j\le 2$,  we see that
\begin{equation*}\begin{aligned}0&=2-\frac{4-2K_{21}}{2-K_{21}}\\&\ge 2-\frac{2(2\sigma_2(p)-2\gamma_2(p)+K_{21}\sigma_1(p))-K_{21}(2\sigma_1(p)-2\gamma_1(p)+K_{12}\sigma_2(p))}{2-K_{21}},\end{aligned}\end{equation*}
which implies a contradiction.

\medskip

At this point, we complete the proof of Lemma \ref{weakconcentration}.\end{proof}
\begin{remark}By using the proof of Lemma \ref{weakconcentration}, we can also show that if $p\in\mathfrak{S}$ and $2\sigma_2(p)-2\gamma_2(p)+K_{21}\sigma_1(p)\ge 2$,  then
$  \tilde{u}_{2k}\to-\infty$ {uniformly in any compact subset of} $B_{r_0}(p)\setminus\{p\}$ {as}  $k\to+\infty.$
\end{remark}

Now we are going to derive the shadow system for the bubbling solutions of $(1.14)$ as $\rho_{1k}\to 4\pi$.

\medskip
 \noindent
\textbf{Proof of Theorem \ref{th1.1}.}  We recall the following assumption:   $\max_{M}(u_{1k},u_{2k})\rightarrow+\infty$,   $\rho_{1k}\to4\pi$,  and
$\rho_{2k}\to\rho_2\notin4\pi\mathbb{N}$  for $1\le i\neq j\le 2$ as $k\to+\infty$.\\
Suppose that $\mathfrak{S}_1=\emptyset$. Then $\mathfrak{S}_2\neq\emptyset$, and   Lemma \ref{atleast}-\ref{weakconcentration} imply that $\tilde{u}_{2k}$ is concentrate.      Theorem D and Lemma \ref{le2.2} imply  that $\sigma_2(p)\in 2\mathbb{N}$ for $p\in\mathfrak{S}_2$. Since $\int_Mh_2e^{\tilde{u}_{2k}}=1$, we get that  $\rho_{2k}\to\rho_2\in4\pi\mathbb{N}$, which contradicts to the assumption $\rho_2\notin4\pi\mathbb{N}$.
So we get that $\mathfrak{S}_1\neq\emptyset$. In view of $\rho_{1k}\to4\pi$, we get that $|\mathfrak{S}_1|=1$ and there is a point $Q\in M$ such that   \[\mathfrak{S}_1=\{Q\},\ \textrm{ and}\  \sigma_1(Q)=2.\]
 If  $\sigma_{2}(Q)>0$, then  Theorem D implies   that $\sigma_{2}(Q)\in 2\mathbb{N}$. If $\sigma_{1}(Q)=\sigma_{2}(Q)=2$,  then the Pohozaev identity  \eqref{poho} cannot hold, and so $\sigma_{2}(Q)\ge4,$ which implies $2\sigma_1(Q)-2\gamma_1(Q)+K_{12}\sigma_2(Q)< 2$. Using Lemma \ref{le2.2}-\ref{weakconcentration} and Theorem D, we get that $\tilde{u}_{2k}$ is concentrate and $\sigma_2(p)\in2\mathbb{N}$ for $p\in\mathfrak{S}_2$, which contradicts $\rho_{2k}\to\rho_2\notin4\pi\mathbb{N}$ again. So we get that
\[\mathfrak{S}_2=\emptyset.\] By using $\sigma_2(Q)=0$ and Pohozaev identity \eqref{poho}, we see that $\sigma_1(Q)=2(1+\gamma_1(Q))$.  From $\sigma_1(Q)=2$, we get that  $\gamma_1(Q)=0$, that is,
\[ Q\notin S_1.\]
Next, we shall follow the arguments in \cite{llwy} to derive the shadow system \eqref{1.1}.
Let \[\left(\begin{array}{ll}u_{1k}\\ u_{2k}\end{array}\right)
 =\textbf{K}\left(\begin{array}{ll}v_{1k}\\ v_{2k}\end{array}\right).
\]Then we see that
\begin{equation*}
\left\{\begin{array}{ll}
-\Delta v_{1k}=\rho_{1k}(\frac{h_1e^{K_{11}v_{1k}+K_{12}v_{2k}}}{\int_Mh_1e^{K_{11}v_{1k}+K_{12}v_{2k}}dv_g}-1)=\rho_{1k}(h_1e^{\tilde{u}_{1k}}-1),\\
-\Delta v_{2k}=\rho_{2k}(\frac{h_2e^{K_{22}v_{2k}+K_{21}v_{1k}}}{\int_Mh_2e^{K_{22}v_{2k}+K_{21}v_{1k}}dv_g}-1)=\rho_{2k}(h_2e^{\tilde{u}_{2k}}-1),\\ \int_M v_{1k}dv_g=\int_Mv_{2k}dv_g=0.
\end{array}\right.
\end{equation*}
Since $\tilde{u}_{2k}$ is uniformly bounded from above, Green representation formula and $L^p$ estimate imply that $v_{2k}$ is uniformly bounded in $M$ and converges to some
function $\frac12w$ in $C^{1,\alpha}(M)$  for $\alpha\in(0,1)$ as $k\to+\infty$.

 We define $\tilde{v}_{1k}=v_{1k}-\frac12\log\int_{M}\tilde{h}_ke^{2v_{1k}}dv_g,$ where $\tilde{h}_k=h_1e^{K_{12}v_{2k}}$. Then $\tilde{v}_{1k}$ satisfies
\begin{equation*}\left\{\begin{array}{ll}
\Delta \tilde{v}_{1k}+\rho_{1k}( \tilde{h}_ke^{2\tilde{v}_{1k}}-1)=0,\\ \int_M\tilde{h}_ke^{2\tilde{v}_{1k}}dv_g=1.\end{array}\right.
\end{equation*}
We see that $\tilde{v}_{1k}$ blows up at $Q$ as $k\to+\infty$.
 Since $\tilde{h}_k\to h_1e^{\frac{K_{12}w}{2}}$ in $C^{1,\alpha}(M)$ as $k\to+\infty$ and $Q\notin S_1$,    \cite[Theorem 0.2-0.3]{l}  and \cite{bt1}
  imply
\begin{equation}
\label{2.23}\left\{\begin{array}{ll}
\tilde{v}_{1k}\rightarrow-\infty\ \textrm{uniformly on any compact subset of}~M\setminus\{Q\}\ \mbox{as}\ k\to+\infty,\\
\rho_{1k}\frac{h_1e^{2v_{1k}+K_{12}v_{2k}}}{\int_Mh_1e^{2v_{1k}+K_{12}v_{2k}}dv_g}\rightarrow 4\pi\delta_Q,
\\ v_{1k}=\tilde{v}_{1k}-\int_M\tilde{v}_{1k}dv_g\rightarrow 4\pi G(x,Q)~\mathrm{in}~C^{2,\alpha}_{\textrm{loc}}(M\setminus\{Q\}).\end{array}\right.
\end{equation}From \cite[ESTIMATE B]{cl1}, we also get
\begin{equation}
\label{2.24}
\nabla \log(h_1e^{\frac{K_{12}w}{2}}) \mid_{x=Q}=0.
\end{equation}
Moreover, from \cite[Lemma 4.1]{cl1}, we can find some constant $c>0$, independent of $k$,
\begin{align}
\label{2.25}
\Big|2\nabla\tilde{v}_{1k}-\nabla\Big(\log\frac{e^{\lambda_{k}}}{\big(1+\frac{\rho_{1k}\tilde{h}_k(p^{(k)})e^{\lambda_{k}}}{4}|x-p^{(k)}|^2\big)^2}\Big)\Big|
<c~\mathrm{for}~|x-Q|<r_0,
\end{align}where $\tilde{v}_{1k}(p^{(k)})=\max_{B_{r_0}(Q)}\tilde{v}_{1k}$.
By   (\ref{2.25}) and standard  elliptic estimate, we get that
\begin{equation}
\label{2.26}
v_{2k}\rightarrow\frac12w \ \textrm{in}  \ C^{2,\alpha}(M)\ \textrm{for} \ \alpha\in(0,1)\  \textrm{as}  \ k\to+\infty,\end{equation}
where $w$ satisfies
\begin{equation}
\label{2.27}
\Delta w+2\rho_2(\frac{h_2e^{w+K_{21}4\pi G(x,Q)}}{\int_Mh_2e^{w+K_{21}4\pi G(x,Q)}dv_g}-1)=0.
\end{equation}
From \eqref{2.23}-\eqref{2.27}, we complete the proof of Theorem \ref{th1.1}.
  \hfill $\square$

\vspace{1cm}

\section{Compactness of solutions of shadow system}\label{section3}
  Let
$\rho_2\notin 4\pi\mathbb{N}$. We recall the following shadow system:
\begin{align*}
\quad\left\{\begin{array}{l}
\Delta w +2\rho_2\left(\frac{h_2e^{w +4\pi K_{21} G(x,Q)}}{\int_Mh_2e^{w +4\pi K_{21} G(x,Q )}}-1\right)=0,~\int_M w =0,\\
\nabla\left(\log(h_1e^{\frac{K_{12}}{2}w })\right)\Big|_{x=Q}=0,\ Q\notin S_1.
\end{array}\right.~~\quad\quad\quad\quad\quad\quad\quad\quad(1.1)
\end{align*}
  We note that   any solution $(w,Q)$ of \eqref{1.1} belongs to $\mathring{H}^1(M)\times [M\setminus  S_1 ].$
 Moreover, in the following proposition,  we  shall   prove  the compactness of the solutions of \eqref{1.1} in $\mathring{H}^1(M)\times [M\setminus  S_1 ]$.
 We recall

\medskip
 \noindent
  \textbf{Theorem 1.1.} \emph{Suppose} $\alpha_{p,i}\in\{1,2\}$ \emph{and} $\rho_2\notin4\pi\mathbb{N}$. \emph{Then there are constants} $C>0$ \emph{and} $\delta>0$ \emph{such that}
\emph{for any solution} $(w,Q)$ \emph{of} \eqref{1.1},
\[\|w\|_{C^1(M)}\le C \  \textrm{and}\ \textrm{dist}(Q,S_1)\ge\delta>0.\]

\begin{proof}
For a solution $(w,Q)$ of \eqref{1.1}, we denote
$$\tilde{w} =w -\log\left(\int_Mh_2e^{w +4\pi K_{21} G(x,Q)}\right).$$
Then
\begin{equation}
\int_Mh_2e^{\tilde{w} +4\pi K_{21} G(x,Q)}=1.\label{3.1}
\end{equation}
Since $h_i(x)=h_i^*(x)e^{-4\pi\sum_{q\in S_i}\alpha_{q,i}G(x,q)}$ and $h_i^*(x)>0,~i=1,2,$ we can rewrite the system \eqref{1.1} to
\begin{align*}
\quad\quad\quad
\left\{\begin{array}{l}
\Delta \tilde{w} +2\rho_2\left(h_2e^{\tilde{w} +4\pi K_{21} G(x,Q )}-1\right)=0,\\
\nabla\Big(\log h_1^*+\frac{K_{12}}{2}w -4\pi\sum_{q\in S_1}\alpha_{q,1}G(x,q) \Big)\Big|_{x=Q }=0.
\end{array}\right.\quad\quad\quad\quad\quad\quad  (\tilde{S})
\end{align*}

 We claim that  there is a constant $c>0$ such that $\sup_M\tilde{w}\le c$ for any solution $(\tilde{w},Q)$ of the system $(\tilde{S})$.
To prove this claim, we argue by contradiction and suppose  that there is a sequence of solutions  $(\tilde{w}_k,Q_k)$ of the system $(\tilde{S})$ such that  $\tilde{w}_k$ blows up as $k\to+\infty$.
Next, we shall prove  it is impossible by the following steps.

\medskip
 \noindent
\textit{Step 1.} We claim that $Q_0\in S_2$, where $Q_0:=\lim_{k\to+\infty}Q_k$.
\medskip

\noindent  Let  \[\mathcal{B}=\{p\in M\  | \ \exists \{x_k\},\ x_k\to p,\ \tilde{w}_k(x_k)\to+\infty\ \textrm{as}\ k\to+\infty\},\]
and
\begin{equation*}
\sigma(p)=\lim_{\delta\to0}\lim_{k\to+\infty}\frac{1}{2\pi}\int_{B_{\delta}(p)}\rho_2h_2e^{\tilde{w}_k+4\pi K_{21} G(x,Q_k)}.
\end{equation*}
It is known that $|\mathcal{B}|<+\infty$. For example, see \cite{bm} or the  arguments of Lemma \ref{le2.1}.
We note that the singular set of the equation $(\tilde{S})$ is $S_2\cup\{Q_k\}$.
If $Q_0\notin S_2$, then $(\tilde{S})$ has no collapsing singular points. In this case, Theorem A implies no blowup for $\tilde{w}_k$ if $2\rho_2\notin 8\pi\mathbb{N}$, which yields a contradiction to the assumption. Thus we conclude that $Q_0\in S_2$.

\medskip
 \noindent
\textit{Step 2.}   We claim that  $\mathcal{B}=\{Q_0\}$.

\medskip
 \noindent
Let $r>0$ be a small constant satisfying  $B_r(p_i)\cap B_r(p_j)$ for any $p_i\neq p_j\in \mathcal{B}$. If $p_0\in\mathcal{B}\setminus \{Q_0\}\neq\emptyset$,  then the  Brezis-Merle Theorem \cite[Theorem 3]{bm}  implies
that $\tilde{w}_k\to-\infty$ in any compact subset of $B_{r}(p_0)\setminus\{p_0\}$, and then we have
$\tilde{w}_k\to-\infty$ in any compact subset of $M\setminus\mathcal{B}$.
Since $\alpha_{p,2}\in\mathbb{N}$ for any $p\in S_2$, we have the local mass $\sigma(p_0)$ for $p_0\in S_2\setminus\{Q_0\}$ is
an even positive integer by Theorem A (see also \cite{bt1}).  We note that $Q_0\notin\mathcal{B}$, then $\sigma(Q_0)=0$.  On the other hand,  if $Q_0\in\mathcal{B}$, then $\sigma(Q_0)\in 2\mathbb{N}$ by  Theorem \ref{thmeven}.
Hence,
  the sum of local masses  of all the blow up points is an even positive integer. As a consequence,  $2\rho_2\in8\pi\mathbb{N}$,
which yields a contradiction to $\rho_2\notin4\pi\mathbb{N}$.

\medskip
 \noindent\textit{Step 3.} If $\tilde{w}_k$ is concentrate, we   get a contradiction again  by using Theorem \ref{thmeven} and $\rho_2\notin 4\pi\mathbb{N}$ as in  \textit{Step 2}.
Therefore, $\tilde{w}_k$ is non-concentrate.
Then we claim that
\begin{equation}
\label{3.29}
\alpha_{Q_0,2}-K_{21}+1>\sigma(Q_0)\in 2\mathbb{N}.
\end{equation}
Since $\tilde{w}_k$ is non-concentrate,   there is a function $\tilde{w}_0$ satisfying
 $\tilde{w}_k\to \tilde{w}_0\ \ \textrm{in}\ \ C^2_{\textrm{loc}}(M\setminus\{Q_0\})$  as $k\to+\infty$, and
\begin{equation*}
 \Delta \tilde{w}_0  +2\rho_2(h_2e^{\tilde{w}_0(x)+4\pi K_{21}  G(x,Q_0)}-1)=-4\pi\sigma(Q_0)\delta_{Q_0}\ \textrm{on}\ M,
\end{equation*}
and \begin{equation}\label{L1}h_2e^{\tilde{w}_0(x)+4\pi K_{21}\pi G(x,Q_0)}\in L^1(M).\end{equation}
We recall that $h_2(x)=h_2^*(x)e^{-4\pi\sum_{p\in S_2}\alpha_{p,2}G(x,p)}$ and $h_2^*>0$.
Then by using $Q_0\in S_2,$ \eqref{L1}  and  the Green representation formula       as in Lemma \ref{weakconcentration},   we can obtain \eqref{3.29}.

\medskip

Before we proceed the next step, we make the  following preparation. Let
  \begin{equation}Q_k-Q_0=\varepsilon_k e_k,\  \varepsilon_k>0,\  |e_k|=1,\ \lim_{k\to+\infty}e_k=e\in\mathbb{R}^2.\label{3.30}\end{equation}Then $\lim_{k\to+\infty}\varepsilon_k=0$.   If there is a subsequence $Q_{k_l}$ such that $Q_{k_l}\equiv Q_0$, then   equation has no collapsing singularity which implies $\tilde{w}_{k_l}$ does not blow up by Theorem A. So we may assume that
   \[\varepsilon_k\neq0\ \textrm{ for all}\ \ k.\]
Let \begin{align}v_k(x)= \tilde{w}_k({\varepsilon_k}x+Q_0) +(2+2\alpha_{Q_0,2}-2K_{21})\ln{\varepsilon_k}.\label{definition_of_vk}\end{align} Then $v_k$ satisfies
\begin{align}\label{equationofvk}
 \Delta v_k +2\rho_2 (h_k({\varepsilon_k}x)e^{v_k(x)}|x|^{2\alpha_{Q_0,2}} |x-e_k |^{-2K_{21}}-\varepsilon_k^2)=0
\  \textrm{on}\ B_{\frac{r}{{\varepsilon_k}}}(0),
\end{align}
where
$$h_k(x-Q_0)={h}_2^*(x )e^{-4\pi\left(\sum_{p\in S_2\setminus\{Q_0\}}\alpha_{p,2} G(x,p )+\alpha_{Q_0,2}R(x,Q_0)-K_{21}R(x,Q_k)\right)},$$
and $R(x,p)$ denotes the regular part of the Green function $G(x,p).$  We note that there is a constant $c_0>0$ such that $h_k(x)\ge c_0>0$ on $B_r(0)$  for all $k$.

\medskip
 \noindent
\textit{Step 4.} We claim that  $v_k$ blows up at some finite points in $\mathbb{R}^2$.  Since the proof of this claim is long, we postpone it in  Lemma \ref{le3.04} below.

\medskip
 \noindent
\textit{Step 5.} In this step, we will determine the location of blow up points of $v_k$.
\medskip

Let $\mathcal{B}_{v}$ be the set of finite blow up points of $v_k$ such that
\begin{align}\label{defofBv}
\mathcal{B}_v=\left\{p\in B_{\frac{r}{{\varepsilon_k}}}(0)\  | \ \exists \{x_k\},\ x_k\to p,\ v_k(x_k)\to+\infty\ \textrm{as}\ k\to+\infty\right\}.
\end{align}By using \eqref{3.29} and the assumption $\alpha_{p,i}\in\{1,2\}$, we get the following possibilities (i)-(iii):
\begin{itemize}\item[(i)]  When $K_{21}=-1$ ($\textbf{A}_2$ case),
  we have $\sigma(Q_0)<4$ and    $\sigma(Q_0)=2$, which implies
  $|\mathcal{B}_v|=1$ and $\mathcal{B}_v\cap\{0,e\}=\emptyset$.
\item[(ii)]  When $K_{21}=-2$ ($\textbf{B}_2$ case),    we have $\sigma(Q_0)<5$ and   $\sigma(Q_0)\in\{2, 4\}$, which implies $|\mathcal{B}_v|=1,2$.
  If $v_k$ blows up at $e$, then  $\sigma(Q_0)\ge  2-2K_{21}=6$, which   contradicts $\sigma(Q_0)\in\{2, 4\}$.
 If $v_k$ blows up at $0$, then $\sigma(Q_0)\ge 2+2\alpha_{Q_0,2}\ge 4$ and thus $\sigma(Q_0)=4$, $\alpha_{Q_0,2}=1$. However, from \eqref{3.29}, we see that  $\alpha_{Q_0,2}-K_{21}+1= 4>\sigma(Q_0)=4$, which implies a contradiction.
 Thus $\mathcal{B}_v\cap\{0,e\}=\emptyset$.
\item[(iii)] When $K_{21}=-3$ ($\textbf{G}_2$ case),    we have $\sigma(Q_0)<6$ and   $\sigma(Q_0)\in\{2,4\}$, which implies $|\mathcal{B}_v|=1,2$.
  If $v_k$ blows up at $e$, then  $\sigma(Q_0)\ge 2-2K_{21}=8$, which   contradicts $\sigma(Q_0)\in\{2, 4\}$.
 If $\alpha_{Q_0,2}=2$ and $v_k$ blows up at $0$, then $\sigma(Q_0)\ge 2+2\alpha_{Q_0,2}=6$, which   contradicts $\sigma(Q_0)\in\{2, 4\}$.
If $\alpha_{Q_0,2}=1$, then it might be possible that  $v_k$ blows up at $0$ since \eqref{3.29}  holds in this case, that is, $\alpha_{Q_0,2}-K_{21}+1=5>\sigma(Q_0)\ge 2+2\alpha_{Q_0,2}= 4$.
 Thus $\mathcal{B}_v\cap\{0,e\}=\emptyset$ or $\mathcal{B}_v=\{0\}$.
\end{itemize}

\medskip

  In conclusion, we get either $\mathcal{B}_v\cap\{0,e\}=\emptyset$ or $\mathcal{B}_v=\{0\}$.  So  $v_k$ does not blow up at $e$.
  Moreover, we claim that if  $\mathcal{B}_v\cap\{0,e\}=\emptyset$, then \begin{align}\label{3.036}
\sum_{q_i\in\mathcal{B}_v}\frac{K_{21}e}{q_i-e}>0,
\end{align}
which will be used to yield a contradiction after $\nabla w_k(Q_k)$ is computed (see \eqref{3.330} below).
We postpone the proof of the claim \eqref{3.036} in Lemma \ref{lemmaforclaim} later.

\medskip
 \noindent
\textit{Step 6.} In this step, we will compute $\nabla w_k(Q_k)$ and derive a contradiction from the second equation in $(\tilde{S})$.
The computation $\nabla w_k(Q_k)$ depends on whether blow up occurs at the   singularity  $0$  or not.

\medskip

\noindent \textit{Case 1.} $\mathcal{B}_v\cap\{0,e\}=\emptyset$.
\medskip

By using Green's representation formula,  we see that as $k\to+\infty$,
\begin{equation}\label{3.21}\nabla v_k(x) \to  -\sum_{q_i\in \mathcal{B}_{v}}4 \frac{x-q_i}{|x-q_i|^2}\ \textrm{uniformly in}\ C^0_{\textrm{loc}}(B_{\frac{r}{{\varepsilon_k}}}(0)\setminus \mathcal{B}_{v} ).\end{equation}
Then we see that as $k\to+\infty$, \begin{equation}
\label{3.330}
{\varepsilon_k}\nabla w_k(Q_k)=\nabla v_k (e_k ) \to -\sum_{q_i\in \mathcal{B}_{v}}4\frac{e-q_i}{|e-q_i|^2},
\end{equation}where we used $\lim_{k\to+\infty}e_k=e$. We regard  $x\in B_{\frac{r}{{\varepsilon_k}}}(0)$ as a complex value $x=x_1+ix_2\in\mathbb{C}$ and denote its conjugate by $\bar{x}$.
The balance condition in $(\tilde{S})$  and \eqref{3.330} imply that
\begin{equation}
\begin{aligned}
\label{3.037}
0=~&\lim_{k\to+\infty}{\varepsilon_k}\left( \sum_{p\in S_1}2\alpha_{p,1}\frac{Q_k-p}{|Q_k-p|^2}+\frac{K_{12}}{2}\nabla w_k(Q_k) \right)\\
=~&2\alpha_0e-2 K_{12}\sum_{q_i\in\mathcal{B}_v}\frac{e-q_i}{|e-q_i|^2}=2\alpha_0\frac{1}{\bar{e}}-2 K_{12}\sum_{q_i\in\mathcal{B}_v}\frac{1}{\bar{e}-\bar{q_i}},
\end{aligned}
\end{equation}where $\alpha_0=\alpha_{Q_0,1}$ if $Q_0 \in S_1$ and
$\alpha_0=0$ if $Q_0 \notin S_1$.
In view of \eqref{3.036} and \eqref{3.037}, we get that
\begin{equation}
\begin{aligned}
\label{3.038}
0=\alpha_0- K_{12}\sum_{q_i\in\mathcal{B}_v}\frac{e}{e-q_i}>0,
\end{aligned}
\end{equation}
which implies a contradiction.

\medskip

\noindent \textit{Case 2.} $\mathcal{B}_v=\{0\}$.
\medskip

By using Green's representation formula,  we see that as $k\to+\infty$,
\begin{align}\label{gradient_conv}
{\varepsilon_k}\nabla w_k(Q_k)=\nabla v_k (e_k ) \to-4(\alpha_{Q_0,2}+1)e.
\end{align}
The balance condition in $(\tilde{S})$ gives
\begin{equation}
\begin{aligned}
\label{3.40}
0=&\lim_{k\to+\infty}{\varepsilon_k}\Big(\sum_{p\in S_1}2\alpha_{p,1}\frac{Q_k-p}{|Q_k-p|^2}+\frac{K_{12}}{2}\nabla w_k(Q_k)
 \Big)\\
=&~2\alpha_0e-2(\alpha_{Q_0,2}+1) K_{12}e,
\end{aligned}
\end{equation}where $\alpha_0=\alpha_{Q_0,1}$ if $Q_0 \in S_1$ and
$\alpha_0=0$ if $Q_0 \notin S_1$.
Since $2\alpha_0-2(\alpha_{Q_0,2}+1) K_{12}>0$, we must have $e=0$, which contradicts to $|e|=1$.

 \bigskip

Finally, from the above arguments, we conclude that $\tilde{w}_k$ cannot blow up.
Using Green's representation formula for  \eqref{1.1}, we see
\begin{equation}
\label{3.42}w(x)=2\int_M\rho_2 h_2(y)e^{\tilde{w}(y)+4\pi K_{21} G(y,Q_k)}G(x,y)dy.\end{equation}
Since there is a constant $c>0$ such that $\sup_M\tilde{w}\le c$ for any solution $(\tilde{w},Q)$ of the system $(\tilde{S})$,  we see from \eqref{3.42} that $w$ is uniformly bounded for  any solution $(w,Q)$ of the system \eqref{1.1}. By standard elliptic estimate and the balance condition in \eqref{1.1}, we can find some
constants $C, \delta>0$ such that
\begin{equation*}
\|w\|_{C^1(M)}\le C,
\ \ \textrm{and}\ \  \textrm{dist}(Q,S_1) >\delta>0, \end{equation*}for any solution $(w,Q)$ of the system \eqref{1.1}.
  Now we complete the proof of Theorem \ref{compactnessthm}.
\end{proof}
Now we are going to prove the claim \eqref{3.036}.
\begin{lemma}\label{lemmaforclaim}Let $v_k$ be a solution of \eqref{equationofvk}. We assume that  as $k\to+\infty$, $v_k$ blows up at   points $p\in\mathcal{B}_v$ (see \eqref{defofBv} for the definition of $\mathcal{B}_v$).   If $|\mathcal{B}_v|=1,2$ and  $\mathcal{B}_v\cap\{0,e\}=\emptyset$, then \[
\sum_{q_i\in\mathcal{B}_v}\frac{K_{21}e}{q_i-e}>0.\]
\end{lemma}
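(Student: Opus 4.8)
The plan is to show that the finite blow-up set $\mathcal B_v$ is exactly the root set of an explicit second order Fuchsian ODE, and then to read off the value of $\sum_{q_i\in\mathcal B_v}\frac{K_{21}e}{q_i-e}$ directly from that ODE at the singular point $z=e$. Throughout I regard the rescaled variable as a complex number $z=x_1+ix_2\in\mathbb C$, write $\alpha:=\alpha_{Q_0,2}$ and $m:=|\mathcal B_v|$, and recall from \eqref{3.21} that $\partial_z v_k\to-2\sum_{q_i\in\mathcal B_v}(z-q_i)^{-1}$ in $C^0_{\mathrm{loc}}(\mathbb R^2\setminus\mathcal B_v)$. In particular the coefficient $4$ in \eqref{3.21} records that each $q_i$ is a simple blow-up point of \eqref{equationofvk} of mass $8\pi$, so that $\sigma(Q_0)=2m$.

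First I would derive the balance condition obeyed by the points $q_i$. Applying the local Pohozaev identity to \eqref{equationofvk} on a small disk $B_\delta(q_i)$ containing no other blow-up point (the extra term $-2\rho_2\varepsilon_k^2$ contributes $o(1)$ and the boundary integrals are controlled through \eqref{3.21}), then letting $k\to+\infty$ and $\delta\to0$, I use that the measure $2\rho_2 h_k(\varepsilon_k\,\cdot)|z|^{2\alpha}|z-e_k|^{-2K_{21}}e^{v_k}$ concentrates at $q_i$ with mass $8\pi$, while $\partial_z\log\!\big(|z|^{2\alpha}|z-e|^{-2K_{21}}\big)=\frac{\alpha}{z}-\frac{K_{21}}{z-e}$ is continuous at $q_i$ (here $q_i\notin\{0,e\}$ is essential). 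This yields, for every $q_i\in\mathcal B_v$,
\[
2\sum_{q_l\in\mathcal B_v,\,l\neq i}\frac{1}{q_i-q_l}=\frac{\alpha}{q_i}-\frac{K_{21}}{q_i-e}.
\]

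Next I encode these relations polynomially. Let $P(z)=\prod_{q_i\in\mathcal B_v}(z-q_i)$, a monic polynomial of degree $m$ with simple roots, so that $P''(q_i)/P'(q_i)=2\sum_{l\neq i}(q_i-q_l)^{-1}$. The balance condition then reads $q_i(q_i-e)P''(q_i)=[\alpha(q_i-e)-K_{21}q_i]P'(q_i)$ at every root, whence
\[
R(z):=z(z-e)P''(z)-\big[\alpha(z-e)-K_{21}z\big]P'(z)
\]
vanishes at all $m$ distinct roots of $P$. Since $\deg R\le m=\deg P$, we get $R=cP$, and comparing leading coefficients gives $c=m\big[(m-1)-\alpha+K_{21}\big]$. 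Thus $P$ solves the Fuchsian (hypergeometric-type) equation
\[
z(z-e)P''-\big[\alpha(z-e)-K_{21}z\big]P'=m\big[(m-1)-\alpha+K_{21}\big]P.
\]

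Finally I would evaluate this ODE at $z=e$. The second-derivative term drops out, giving $K_{21}e\,P'(e)=m[(m-1)-\alpha+K_{21}]P(e)$, and $P(e)\neq0$ because $e\notin\mathcal B_v$. Since $P'(e)/P(e)=\sum_i(e-q_i)^{-1}$, this produces the closed form
\[
\sum_{q_i\in\mathcal B_v}\frac{K_{21}e}{q_i-e}
=-K_{21}e\,\frac{P'(e)}{P(e)}
=m\big[\alpha-K_{21}-(m-1)\big],
\]
which is a real number. It is positive: combining \eqref{3.29} with $\sigma(Q_0)=2m$ gives $\alpha-K_{21}+1>\sigma(Q_0)=2m$, hence $\alpha-K_{21}-(m-1)=(\alpha-K_{21}+1)-m>m\ge1$, so the sum exceeds $m^2>0$. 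The main obstacle is the first step: making the local Pohozaev identity rigorous in the presence of the two collapsing singular weights $|z|^{2\alpha}$ and $|z-e_k|^{-2K_{21}}$ and of the vanishing inhomogeneity $2\rho_2\varepsilon_k^2$, and in particular using the simple-bubble structure encoded in \eqref{3.21} (so that each $q_i$ carries mass exactly $8\pi$ and $\sigma(Q_0)=2m$); once the balance condition is established, the polynomial/ODE argument is purely algebraic and treats $m=1$ and $m=2$ uniformly.
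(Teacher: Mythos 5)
Your proposal is correct and arrives at exactly the paper's closed form $\sum_{q_i\in\mathcal{B}_v}\frac{K_{21}e}{q_i-e}=|\mathcal{B}_v|\big(\alpha_{Q_0,2}-K_{21}-(|\mathcal{B}_v|-1)\big)$, but by a genuinely different algebraic route. Both arguments rest on the same balance condition for the blow-up locations: the paper simply quotes it as \eqref{3.034} from \cite[ESTIMATE B]{cl1}, while you rederive it from a local Pohozaev identity -- which is where the real analytic content lies, and which you correctly flag as the delicate step in the presence of the collapsing weights. From that point on, the paper multiplies the complex form of \eqref{3.034} by $q_i$, sums over $i$, and uses the antisymmetry $\frac{q_i}{q_j-q_i}+\frac{q_j}{q_i-q_j}=-1$ to collapse the double sum to $-|\mathcal{B}_v|(|\mathcal{B}_v|-1)$ in \eqref{3.035}; you instead package the $m=|\mathcal{B}_v|$ relations as the statement that $P(z)=\prod_i(z-q_i)$ solves the Fuchsian equation $z(z-e)P''-[\alpha(z-e)-K_{21}z]P'=m[(m-1)-\alpha+K_{21}]P$ and evaluate at the singular point $z=e$, where $P(e)\neq0$ since $e\notin\mathcal{B}_v$. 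The two computations are equivalent linear algebra packaged differently, and your Stieltjes-type polynomial formulation is arguably cleaner and treats arbitrary $m$ uniformly. One remark on the final positivity step: you deduce $\alpha_{Q_0,2}-K_{21}-(m-1)>m$ from \eqref{3.29} together with the identification $\sigma(Q_0)=2m$, which imports facts from the surrounding compactness argument (and which the lemma, read in isolation, does not assume); this is fine in context but unnecessary, since $\alpha_{Q_0,2}\ge1$, $K_{21}\le-1$ and $m\le 2$ already give $\alpha_{Q_0,2}-K_{21}-(m-1)\ge1>0$ directly, which is all the paper uses.
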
\begin{proof}
If  $\mathcal{B}_v\cap\{0,e\}=\emptyset$, then the location of blow up points can be obtained by Pohozaev type identity (see \cite[ESTIMATE B]{cl1}): for any $q_i\in\mathcal{B}_v$, it holds that
\begin{equation}
\label{3.034}
2\sum_{q_j\in\mathcal{B}_v\setminus\{q_i\}}\frac{q_j-q_i}{|q_j-q_i|^2}+ \alpha_{Q_0,2}\frac{q_i}{|q_i|^2}- K_{21}\frac{q_i-e}{|q_i-e|^2}=0.
\end{equation}
We regard  $x\in B_{\frac{r}{{\varepsilon_k}}}(0)$ as a complex value $x=x_1+ix_2\in\mathbb{C}$.  Then  for any $q_i\in\mathcal{B}_v$,
\begin{equation*}
2\sum_{q_j\in\mathcal{B}_v\setminus\{q_i\}}\frac{q_i}{ q_j - {q_i}}+\alpha_{Q_0,2} - K_{21}\frac{q_i}{ {q_i}- {e}}=0.
\end{equation*}
Taking the summation  for $q_i\in\mathcal{B}_v$, we get
\begin{equation}
\begin{aligned}
\label{3.035}
0&=2\sum_{q_i\in\mathcal{B}_v}\sum_{q_j\in\mathcal{B}_v\setminus\{q_i\}}\frac{q_i}{ q_j-q_i }+ \alpha_{Q_0,2}|\mathcal{B}_v| -K_{21}\sum_{q_i\in\mathcal{B}_v}\frac{q_i}{q_i-e}\\
&=-|\mathcal{B}_v|(|\mathcal{B}_v|-1)+(\alpha_{Q_0,2}-K_{21})|\mathcal{B}_v| -K_{21}\sum_{q_i\in\mathcal{B}_v}\frac{e}{q_i-e}.
\end{aligned}
\end{equation}
Now  we  get
\begin{align*}
\sum_{q_i\in\mathcal{B}_v}\frac{K_{21}e}{q_i-e}=\left\{\begin{array}{ll}
 (\alpha_{Q_0,2}-K_{21} )>0\quad\quad\   \textrm{if}\ \ |\mathcal{B}_v|=1,\\
 2(\alpha_{Q_0,2}-K_{21}-1)>0\ \textrm{if}\ \ |\mathcal{B}_v|=2,
\end{array}\right.
\end{align*}
and complete the proof of Lemma \ref{lemmaforclaim}.
\end{proof}

\vspace{1cm}

\section{Proof of Theorem \ref{thmeven} and Lemma \ref{le3.04}}
In this section, we are going to  prove Theorem \ref{thmeven} and Lemma \ref{le3.04}.
We recall the following equation:
 \begin{equation*}
\quad\quad\quad\quad\quad\quad \Delta \hat{u}_k+2\rho_2    \hat{h} e^{\hat{u}_k}  =4\pi\sum_{p_{k_j}\in\hat{S}_k}\beta_{j} \delta_{p_{k_j}}\ \textrm{in}\ B_1(0),\quad\quad\quad\quad\quad\quad\quad\quad\ (1.9)
\end{equation*}  where $ \hat{h}>0$,  $|\hat{S}_k|$ is independent of $k$,    $\lim_{k\to+\infty}p_{k_j}=0$ for all $p_{k_j}\in \hat{S}_k$,  $p_{k_i}\neq p_{k_j}$ if $i\neq j$,  and  $\beta_{j}\in\mathbb{N}$. We assume that
\begin{align}\label{assumption}
 \left\{\begin{array}{l}(i)\ 0 \ \textrm{is the only blow up point:}\\   \max_{|x|\ge r}\hat{u}_k\le C(r),\ \sup_{B_1(0)}\hat{u}_k\to+\infty\ \textrm{as}\ k\to+\infty,\\  \\
(ii) \ \textrm{bounded oscillation:}\\  \sup_{x, y\in\partial B_1(0)}|\hat{u}_k(x)-\hat{u}_k(y)|\le c \ \textrm{for   some constant}\ c>0,\\  \\
(iii)\ \textrm{finite mass}:\\    \int_{B_1(0)}   \hat{h} e^{\hat{u}_k}\le C\ \textrm{for   some constant}\ C>0.
\end{array}\right.
\end{align}
 We denote the local mass at the blow up point $0$ by
 \begin{equation*}
  \sigma_0:=\lim_{\delta\to0}\lim_{k\to+\infty}\frac{1}{2\pi} \int_{B_{\delta}(0)}\rho_2\hat{h} e^{\hat{u}_k}.  \end{equation*}
  Clearly, $\sigma_0>0$ by (i).
 To prove Theorem \ref{thmeven}, we need to  consider the following equation: \begin{align}\label{entiresolutions}
 \left\{\begin{array}{l}
\Delta u +e^u=\sum_{i=1}^N 4\pi\alpha_i\delta_{p_i}\ \ \textrm{in}\ \ \R^2,\\    \\ \int_{\R^2}e^{u}dx<+\infty,
\end{array}\right.
\end{align}where $p_j$ are distinct points in $\R^2$ and $\alpha_i\in\mathbb{N}$.
The following  result in \cite{lwz} plays a crucial role in proving Theorem \ref{thmeven}.
\begin{theorem}\cite[Theorem 2.1]{lwz}\label{th3.3}
Let $u$ be a solution of \eqref{entiresolutions} and $\alpha_i\in\mathbb{N}$.
Then
\[\int_{\R^2} e^{u}dx=4\pi\Big(\sum_{i=1}^N\alpha_i+\frac{\alpha}{2}\Big)\in 8\pi\mathbb{N},\ \textrm{where}\ \alpha>2.\]
\end{theorem}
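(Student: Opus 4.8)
My plan is to split the statement into the mass \emph{formula} (including $\alpha>2$), which is elementary, and the \emph{quantization} $\int_{\R^2}e^u\in8\pi\mathbb{N}$, which is the real content. For the formula, set $m=\int_{\R^2}e^u\,dx$ and integrate \eqref{entiresolutions} over a ball $B_R(0)$ containing all the $p_i$. With $\overline u(R)=\frac{1}{2\pi R}\int_{\partial B_R(0)}u\,ds$, the divergence theorem gives $2\pi R\,\overline u'(R)=-\int_{B_R(0)}e^u+4\pi\sum_i\alpha_i$; letting $R\to+\infty$ and using $\int_{B_R}e^u\to m$ yields $\overline u(R)=-\alpha\log R+O(1)$ with $\alpha:=\frac{m}{2\pi}-2\sum_i\alpha_i$, so that $m=4\pi\big(\sum_i\alpha_i+\frac\alpha2\big)$ by the very definition of $\alpha$. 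A Brezis--Merle/Chen--Li asymptotic analysis promotes this to $u(x)=-\alpha\log|x|+O(1)$ as $|x|\to+\infty$, and the integrability of $e^u\sim|x|^{-\alpha}$ forces $\alpha>2$. This is the displayed formula; it remains to show $m\in8\pi\mathbb{N}$.

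For the quantization I would use the developing (Liouville) map. Away from the $p_i$, $u$ solves $\Delta u+e^u=0$, so on any simply connected subdomain of $\R^2\setminus\{p_1,\dots,p_N\}$ one has the Liouville representation $u=\log\frac{8|f'|^2}{(1+|f|^2)^2}$ for a locally defined meromorphic map $f$, unique up to post-composition by an element of $PSU(2)$ (which preserves this expression). Continuing $f$ along a loop changes it by such a M\"obius transformation, the monodromy, and the whole strategy reduces the theorem to proving that $f$ is a globally single-valued \emph{rational} map $S^2\to S^2$, $S^2=\mathbb{C}\cup\{\infty\}$. Indeed, once $f$ is rational of degree $d$, the metric $e^u|dx|^2=2\,f^{*}g_{FS}$ is twice the pullback of the Fubini--Study metric, so $\int_{\R^2}e^u\,dx=8\pi\deg(f)=8\pi d\in8\pi\mathbb{N}$ (with $d\ge1$ since $u$ is a genuine solution), and we are done.

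The heart of the matter, and the step I expect to be the main obstacle, is the global single-valuedness and rationality of $f$. Near each $p_i$ the metric has a conical singularity of angle $2\pi(1+\alpha_i)$ with $\alpha_i\in\mathbb{N}$; the key point is that this \emph{integrality} makes $p_i$ an apparent singularity of the second-order Fuchsian ODE whose solution ratio is $f$, equivalently that the local monodromy of $f$ around $p_i$ is trivial in $PSU(2)$ and that $f$ extends meromorphically across $p_i$ as a branch point of order $\alpha_i$. Establishing this --- excluding logarithmic terms by exploiting the positivity and smoothness of the cone metric, which is exactly where $\alpha_i\in\mathbb{N}$ is essential --- is the delicate part. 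Since $\pi_1(\R^2\setminus\{p_1,\dots,p_N\})$ is generated by the loops around the individual $p_i$, triviality of each local monodromy forces the global monodromy to vanish, so $f$ is single-valued and meromorphic on all of $\mathbb{C}$. Finally, finiteness of $\int_{\R^2}e^u$ excludes an essential singularity of $f$ at $\infty$ (such a singularity would make the map cover $S^2$ infinitely often and the mass infinite), so $f$ has at worst a pole there and is therefore rational; the area-equals-degree identity of the previous paragraph then gives $m\in8\pi\mathbb{N}$. As a consistency check, Gauss--Bonnet for the resulting cone metric on $S^2$, namely $\frac{1}{2\pi}\int_{S^2}K_g\,dA_g=\chi(S^2)+\sum_i\alpha_i+(\beta_\infty-1)$ with $K_g\equiv\frac12$, $dA_g=e^u\,dx$ and cone angle $2\pi\beta_\infty=\pi(\alpha-2)$ at infinity, reproduces $m=4\pi(\sum_i\alpha_i+\frac\alpha2)$.
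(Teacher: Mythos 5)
You should first be aware that the paper contains no proof of this statement at all: Theorem \ref{th3.3} is imported verbatim from \cite[Theorem 2.1]{lwz} (a reference listed as ``in preparation'') and is used as a black box in Section 4, in the proofs of Lemma \ref{evenm0} and Theorem \ref{thmeven}. So there is no ``paper's own proof'' to compare you against; your proposal has to be judged on its own merits and against the literature the paper leans on.

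On those merits, your decomposition is the right one and the architecture is sound. The first half (integrating the equation over $B_R$, defining $\alpha$ through the mass, upgrading to $u(x)=-\alpha\log|x|+O(1)$ by potential analysis, and reading off $\alpha>2$ from integrability of $e^u$) is exactly the toolkit the paper itself invokes elsewhere, namely \cite[Lemmas 1.2 and 1.3]{cli}, and is correct. The second half, the developing-map route (Liouville representation $u=\log\frac{8|f'|^2}{(1+|f|^2)^2}$, monodromy in $PSU(2)$, integrality of $\alpha_i$ forcing trivial local monodromy, hence a single-valued meromorphic $f$ on $\mathbb{C}$, rational by the finite-mass/Picard argument at infinity, hence $\int_{\R^2}e^u\,dx=8\pi\deg f$), is the classical mechanism behind quantization at integer cone angles and your normalization checks (including the factor $2$ in $e^u|dx|^2=2f^*g_{FS}$ and the Gauss--Bonnet consistency check) are accurate. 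The one place where your write-up is a sketch rather than a proof is precisely the step you flag as delicate: exclusion of logarithmic terms, i.e.\ triviality of the local monodromy at each $p_i$. Since this is the entire content of the theorem (it is exactly where $\alpha_i\in\mathbb{N}$ enters, and the statement fails for non-integer weights), it cannot be left as an assertion. The argument that closes it is short and you should write it out: the monodromy representation of $\pi_1\big(\R^2\setminus\{p_1,\dots,p_N\}\big)$ lands in $PSU(2)$ because $e^u=\frac{8|f'|^2}{(1+|f|^2)^2}$ is single-valued and positive; if a logarithmic term were present at $p_i$, the local monodromy there would be a nontrivial parabolic element of $PSL(2,\mathbb{C})$; but every element of $PSU(2)$ is represented by a unitary, hence diagonalizable, matrix, so $PSU(2)$ contains no nontrivial parabolic element; therefore the indicial roots $0$ and $1+\alpha_i$ yield genuine (log-free) local solutions and $f$ extends meromorphically across $p_i$ with ramification index $1+\alpha_i$. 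With that paragraph added, together with the Picard argument at infinity you already state, your proof is complete, and it moreover yields the slightly sharper conclusion $\int_{\R^2}e^u\,dx=8\pi d$ with $d=\deg f\ge 1$.
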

 Now we are going to prove Theorem \ref{thmeven}.

\medskip
 \noindent \textbf{Proof of Theorem \ref{thmeven}.}  If $|\hat{S}_k|=1$, then  there is no collapsing in the singular sources. If $\sigma_0\notin 2\mathbb{N}$, then Theorem A implies no blowup for
$\hat{u}_k$, which yields a contradiction to the assumption (see also \cite{bt1}). Thus,  Theorem \ref{thmeven} holds when  $|\hat{S}_k|=1$.

  From now on, we consider the case  $|\hat{S}_k|\ge2$.
  To prove Theorem \ref{thmeven} when $|\hat{S}_k|\ge2$, we will compare the contribution of the  masses from  two different regions $B_{r}(0)\setminus B_{\varepsilon_k R}(0)$ and $B_{\varepsilon_k R}(0)$, where  $0<{r}\ll1, R\gg1$ are fixed constants and $\lim_{k\to+\infty}\varepsilon_k=0$.  To do it,  we will apply  the Pohozaev identity for the equation $(1.9)$ in  the region $B_{r}(0)\setminus B_{\varepsilon_k R}(0)$.

 Let $\varepsilon_k=\max\{|p_{k_i}-p_{k_j}|\ |\ p_{k_j}\neq p_{k_i}\in\hat{S}_k\}=|p_{k_1}-p_{k_2}|.$
We denote  \begin{equation}
\label{definitionofhatwk}\hat{w}_k(x)= \hat{u}_k(x)-2\sum_{j=1}^{|\hat{S}_k|}\beta_j\ln|x-p_{k_j}|,\end{equation} and
 \begin{align}
\label{3.009}\hat{v}_k(x)= \hat{w}_k({\varepsilon_k}x+p_{k_1})   +(2+2\sum_{j=1}^{|\hat{S}_k|}\beta_j)\ln{\varepsilon_k}.\end{align} Then $\hat{v}_k$ satisfies
\begin{align}
\label{3.9}
 \Delta \hat{v}_k +2\rho_2  \hat{h}({\varepsilon_k}x+p_{k_1})e^{\hat{v}_k(x)} \prod_{j=1}^{|\hat{S}_k|}|x-z_{k,j}|^{2\beta_j} =0
\  \textrm{in}\ B_{\frac{1}{{\varepsilon_k}}}(0),
\end{align}where $z_{k,j}=\frac{p_{k_j}-p_{k_1}}{\varepsilon_k}$. By the definition of $\varepsilon_k>0$, we see that for each $i\in\{1,2,\cdots,  |\hat{S}_k|\}$, there is a point $z_i\in\mathbb{R}^2$ such that  $z_i=\lim_{k\to+\infty}z_{k,i}.$ Fix   constants $0<r\ll1$ and $R\gg1$. Multiplying \eqref{3.9} by $\nabla_x \hat{v}_k\cdot x$ and integrating over $ B_{\frac{r}{{\varepsilon_k}}}(0)\setminus B_{ R}(0)$,  we get that
\begin{align}\label{3.11}
&\int_{\partial B_{r}(p_{k_1})}\Big[\frac{(\nabla_y \hat{w}_k\cdot (y-p_{k_1}))^2}{|y-p_{k_1}|}-\frac{|\nabla_y \hat{w}_k|^2|y-p_{k_1}|}{2}\nonumber
\\&+2\rho_2 \hat{h} (y)e^{\hat{w}_k(y)}|y-p_{k_1}|\prod_{j=1}^{|\hat{S}_k|} |y- (p_{k_j}-p_{k_1})|^{2\beta_j}\Big]d\sigma_y\nonumber
\\&-\int_{\partial B_{ R}(0)}\Big[\frac{(\nabla_x \hat{v}_k\cdot x)^2}{|x|}-\frac{|\nabla_x \hat{v}_k|^2|x|}{2}\\&+2\rho_2\hat{h}({\varepsilon_k}x+p_{k_1})e^{\hat{v}_k(x)}|x| \prod_{j=1}^{|\hat{S}_k|}|x-z_{k,j}|^{2\beta_j}\Big]d\sigma_x\nonumber
\\=&\int_{B_{\frac{r}{{\varepsilon_k}}}(0)\setminus B_{ R}(0)} \Big[2\rho_2{\varepsilon_k}x\cdot\nabla_{{\varepsilon_k}x}\hat{h}({\varepsilon_k}x+p_{k_1}) e^{\hat{v}_k(x)} \prod_{j=1}^{|\hat{S}_k|}|x-z_{k,j}|^{2\beta_j}\nonumber
\\&+2\rho_2\hat{h}({\varepsilon_k}x+p_{k_1})e^{\hat{v}_k(x)}\prod_{i=1}^{|\hat{S}_k|}|x-z_{k,i}|^{2\beta_i}\sum_{j=1}^{|\hat{S}_k|} \frac{2\beta_j(x-z_{k,j})\cdot z_{k,j}}{ |x-z_{k,j} |^2} \nonumber
\\&+(2+2\sum_{j=1}^{|\hat{S}_k|}\beta_j)2\rho_2\hat{h}({\varepsilon_k}x+p_{k_1})e^{\hat{v}_k(x)} \prod_{i=1}^{|\hat{S}_k|}|x-z_{k,i}|^{2\beta_i}\Big]dx.\nonumber
\end{align}
Let \begin{equation*}  M_{\varepsilon_k}(r)=\frac{1}{2\pi}\int_{B_{\frac{r}{{\varepsilon_k}}}(0)} \rho_2\hat{h}({\varepsilon_k}x+p_{k_1})e^{\hat{v}_k(x)} \prod_{i=1}^{|\hat{S}_k|}|x-z_{k,i}|^{2\beta_i}dx,
 \end{equation*}  and
 \begin{equation*} m_{\varepsilon_k}(R)=\frac{1}{2\pi}\int_{B_{ R}(0)} \rho_2\hat{h}({\varepsilon_k}x+p_{k_1})e^{\hat{v}_k(x)} \prod_{i=1}^{|\hat{S}_k|}|x-z_{k,i}|^{2\beta_i}dx.
 \end{equation*}
 We   note that
 \begin{equation}\label{defofsigma_0}\sigma_0 =\lim_{r\to0}\lim_{k\to+\infty}M_{\varepsilon_k}(r).
 \end{equation}
 We denote
 \begin{equation}\label{defofm_0}m_0:=\lim_{R\to+\infty}\Big(\lim_{k\to+\infty}m_{\varepsilon_k}(R)\Big).
 \end{equation}

Now we claim that \begin{equation}\label{pohoidforlocalmass}
\begin{aligned}
\pi[(2\sigma_0)^2-(2m_0)^2]=(2+2\sum_{j=1}^{|\hat{S}_k|}\beta_j)4\pi[\sigma_0-m_0].
\end{aligned}
\end{equation}
To prove the claim \eqref{pohoidforlocalmass},   we need to estimate   $\nabla \hat{w}_k$ on $\partial  B_{r}(0)$ and $\nabla \hat{v}_k$ on  $\partial B_{R}(0)$ (see Lemma \ref{gradientlemma} below).
We remark that if there is  no collapsing of singularities, these estimates are well known. We include the proof here for the sake of the completeness.
   \begin{lemma}\label{gradientlemma}\begin{itemize}\item[(i)] $\nabla \hat{w}_k(x)\to   -2\sigma_0 \frac{x}{|x|^2} +\nabla \phi$  in  $C_{\textrm{loc}}(B_{1}(0)\setminus\{0\})$ as $k\to+\infty$, where $\phi\in C^1(B_{1}(0))$.
\item[(ii)] for any $\delta>0$, there is $R>0$ such that  \[\lim_{k\to+\infty}\nabla \hat{v}_k(x)=   -2(m_0+o(1)) \frac{x}{|x|^2}\ \textrm{ for}\     |x|\ge R,\ \textrm{ where}\  |o(1)|\le\delta.\] \end{itemize}
\end{lemma}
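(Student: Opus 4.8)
The plan is to treat both parts through the same mechanism: once the logarithmic parts of the Dirac sources are subtracted, $\hat{w}_k$ and $\hat{v}_k$ solve Poisson equations whose right-hand sides are \emph{nonnegative} and carry no atoms, so that each gradient can be read off from the enclosed mass via Green's representation together with the planar Newton (shell) theorem. The role of $\sigma_0$ (total local mass) and $m_0$ (inner mass) is precisely to record the mass enclosed at the two relevant radii.

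For part (i), I would first note that since $\Delta\bigl(2\beta_j\ln|x-p_{k_j}|\bigr)=4\pi\beta_j\delta_{p_{k_j}}$, the substitution \eqref{definitionofhatwk} cancels every Dirac source in $(1.9)$, leaving $-\Delta\hat{w}_k=2\rho_2\hat{h}e^{\hat{u}_k}=:f_k\ge0$ on $B_1(0)$, with $\int_{B_1}f_k$ bounded by the finite-mass hypothesis. Because $0$ is the only blow up point, standard elliptic estimates give (along a subsequence) $\hat{u}_k\to\hat{u}_0$ in $C^2_{\textrm{loc}}(B_1(0)\setminus\{0\})$, hence $f_k\to f_0$ there, while the concentrated part accumulates exactly the mass recorded by $\sigma_0$; thus $f_k\,dx\rightharpoonup 4\pi\sigma_0\,\delta_0+f_0\,dx$ as measures. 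In the Green representation of $\hat{w}_k$ on $B_1(0)$, the atom $4\pi\sigma_0\delta_0$ contributes the Newtonian term $-2\sigma_0\,x/|x|^2$ (since $-\Delta(-2\sigma_0\ln|x|)=4\pi\sigma_0\delta_0$), the absolutely continuous part $f_0$ contributes a $C^1$ term, and the harmonic extension of the boundary trace is controlled in $C^1_{\textrm{loc}}$ by the bounded-oscillation hypothesis (on $\partial B_1$ the subtracted logs are $O(|p_{k_j}|)\to0$, so $\hat{w}_k$ and $\hat{u}_k$ have comparable oscillation). Absorbing the last two contributions into a single $\phi\in C^1(B_1(0))$ gives the claim.

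For part (ii), the equation \eqref{3.9} for $\hat{v}_k$ is again a Poisson equation with nonnegative right-hand side $g_k$ and no atoms, and by construction $\int_{B_R(0)}g_k=4\pi m_{\varepsilon_k}(R)$. I would write Green's representation for $\nabla\hat{v}_k$ and split the mass into the part inside $B_R(0)$ and the part outside. For $|x|=R$ with $R$ large, the inner bubbling cluster sits in a fixed region $\{|y|\le R_0\}$ with $R_0\ll R$ (the limiting singularities $z_j$ and the associated bubbles lie at bounded locations), so on that region $\frac{x-y}{|x-y|^2}=\frac{x}{|x|^2}\bigl(1+O(R_0/R)\bigr)$ and the inner part yields the main term $-2m_{\varepsilon_k}(R)\,x/|x|^2$. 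The decisive point is that the mass lying outside $B_R(0)$ — the intermediate-scale mass $\sigma_0-m_0$ — does not contribute to the field inside: since $0$ is the only blow up point, this mass is asymptotically radially symmetric about the origin, so by the planar shell theorem its contribution to $\nabla\hat{v}_k$ on $\partial B_R(0)$ vanishes in the limit up to an $O(1/R)$ error. As $\lim_k m_{\varepsilon_k}(R)\to m_0$ when $R\to+\infty$, choosing $R$ large forces the total error below $\delta$, which is exactly the asserted $\nabla\hat{v}_k(x)=-2(m_0+o(1))\,x/|x|^2$.

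I expect the main obstacle to be this last shell-theorem step in part (ii): one must rule out any stray concentration of $g_k$ in the annulus $R<|x|<r/\varepsilon_k$ and show the outer mass is symmetric enough for its interior field to be negligible, precisely where the collapsing-singularity phenomenon (blow up without concentration of mass) is genuinely at play. This is where the hypotheses that $0$ is the only blow up point and that the boundary oscillation is bounded are essential, and the harmonic part over the growing domain $B_{r/\varepsilon_k}(0)$ must likewise be shown to be negligible in $C^0$ on $\partial B_R(0)$.
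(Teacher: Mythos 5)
Your outline of part (i) is close to the paper's argument in spirit, but it hides the one point that actually requires work: you assert that the absolutely continuous part $f_0$ of the limiting measure ``contributes a $C^1$ term.'' Near the origin $f_0$ behaves like $|x|^{2\sum_j\beta_j-2\sigma_0}e^{\phi}$, and neither the boundedness of $e^{\phi}$ nor the integrability exponent is free. The paper first extracts the inequality $\sum_j\beta_j+1>\sigma_0$ from the $L^1$-integrability of the limit (its \eqref{3.8}), then applies the Brezis--Merle exponential estimate to $\phi=\hat{w}+2\sigma_0\ln|x|$ to get $e^{(1+\delta)|\phi|}\in L^1$ locally, and only then bootstraps to $\phi\in C^1(B_1(0))$; it also treats separately the case where $\hat{w}_k$ concentrates (there one must renormalize by $\hat{w}_k(x_0)$ before passing to a limit, and $\phi$ ends up harmonic). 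Without the bound $\sigma_0<1+\sum_j\beta_j$ your ``$C^1$ term'' claim is unsupported.

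Part (ii) is where your route genuinely breaks down. You split the mass at radius $R$ and dispose of the outer (intermediate-scale) mass $\sigma_0-m_0$ by declaring it ``asymptotically radially symmetric about the origin'' and invoking a planar shell theorem. There is no justification for that symmetry: in these collapsing-singularity problems the intermediate mass can sit at finitely many points on circles of various scales, and nothing forces it to distribute uniformly. The paper does not need any such statement. It instead runs a case analysis on $\hat{v}_k$ (blow-up with the limit tending to $-\infty$ off the blow-up set; blow-up with a nontrivial limit $\hat{v}_0$; uniform decay to $-\infty$; local uniform boundedness), and in the cases where a limit survives it identifies $m_0$ and the gradient asymptotics from the structure theory of entire solutions of $\Delta v+e^v\prod_j|x-z_j|^{2\beta_j}=0$: the decay $\hat{v}_0(x)=-(2\sum_j\beta_j+\alpha)\ln|x|+O(1)$ with $\alpha>2$ and the gradient estimate of Chen--Li (\cite[Lemmas~1.2--1.3]{cli}), together with the quantization result Theorem~\ref{th3.3}. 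The value $m_0=\sum_j\beta_j+\alpha/2$ (or $\sum_p\sigma_v(p)$ in the concentrating case) comes out of this classification, not out of a direct splitting of the Green representation; the intermediate-scale mass is irrelevant for fixed $x$ simply because, after rescaling, it recedes to infinity as $k\to+\infty$, the limits being taken in the order $k\to+\infty$ first and then $R\to+\infty$. Your own closing paragraph correctly identifies the shell-theorem step as the obstacle; as written it is not a provable step, and the case analysis plus the entire-solution asymptotics is the missing idea that replaces it.
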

\begin{proof}We will prove Lemma \ref{gradientlemma} by  the following steps.

\medskip
 \noindent \textit{Step 1.} First, we will prove the estimation (i).
 We note that
  \begin{equation}\label{generaleq1}
\Delta \hat{w}_k+2\rho_2\hat{h}(x)\prod_{j=1}^{|\hat{S}_k|} |x-p_{k_j}|^{2\beta_j}e^{\hat{w}_k}  =0\ \textrm{in}\ B_1(0).
\end{equation}
\\
Let $G_1$ be the Green's function on $B_1(0)$. Since $\sup_{x, y\in\partial B_1(0)}|\hat{u}_k(x)-\hat{u}_k(y)|\le c$, we see that for any $x_1, x_2\in B_1(0)$,
\begin{equation}\begin{aligned}\label{green}
&\hat{w}_k(x_1)-\hat{w}_k(x_2)\\&=\int_{B_1(0)}(G_1(x_1,y)-G_1(x_2,y))2\rho_2\hat{h}(y)\prod_{j=1}^{|\hat{S}_k|} |y-p_{k_j}|^{2\beta_j}e^{\hat{w}_k}dy +O(1),\end{aligned}\end{equation}
as $k\to+\infty$.

Then, we divide our discussion into  two cases according to the behavior of $\hat{w}_k$ on $\partial B_r(0)$:

 \medskip
 \noindent Case 1.   $\hat{w}_k$ is non-concentrate.
 \medskip

By \eqref{assumption}, Green's representation formula  \eqref{green}   and elliptic estimate, we see that there is a function $\hat{w}$ satisfying
$\hat{w}_k\to \hat{w}\ \ \textrm{in}\ \ C^2_{\textrm{loc}}(B_{1}(0)\setminus\{0\})$  as $k\to+\infty$ and
\begin{equation*}
 \Delta \hat{w}  +2\rho_2 \hat{h}(x)|x|^{2\sum_{j=1}^{|\hat{S}_k|}\beta_j}   e^{\hat{w} (x)}  = -4\pi\sigma_0\delta_0\ \textrm{in}\ B_{1}(0).
\end{equation*}
We note that \begin{equation}\hat{h}(x)|x|^{2\sum_{j=1}^{|\hat{S}_k|}\beta_j}   e^{\hat{w} (x)}\in L^1(B_{1}(0)),\label{generalL1}\end{equation}
which implies
\begin{equation}\label{3.8}\sum_{j=1}^{|\hat{S}_k|}\beta_j+1>\sigma_0.\end{equation}
Let $\phi(x)=\hat{w}+2\sigma_0 \ln|x|$. Since $\Delta \phi\in L^1(B_{1}(0))$, by applying \cite[Theorem 1]{bm} as in Lemma \ref{weakconcentration}, we see that for any $\delta>0$, there is $r_{\delta}>0$ such that   $e^{(1+\delta)|\phi|}\in L^1(B_{r_\delta}(0))$.
By standard elliptic estimate and \eqref{3.8}, we get that  $\phi\in C^{1}(B_{1}(0))$. Then  we can get the estimation (i) when $\hat{w}_k$ is non-concentrate.

  \medskip
 \noindent Case 2.   $\hat{w}_k$ is  concentrate.
\medskip

Then $\hat{w}_k\to-\infty$ in $C^0_{\textrm{loc}}( B_{1}(0)\setminus\{0\})$. Fix a point $x_0\in B_{1}(0)\setminus\{0\}$.
Let  $g_k=\hat{w}_k-\hat{w}_k(x_0)$. By \eqref{assumption}, Green's representation formula   \eqref{green}  and standard elliptic estimate, we see that there is a function $g$ satisfying  $g_k\to g\ \ \textrm{in}\ \ C^2_{\textrm{loc}}(B_{1}(0)\setminus\{0\})$  as $k\to+\infty$ and
$
 \Delta g   =-4\pi\sigma_0\delta_0\ \textrm{in}\ B_{1}(0).
$ Then we can see  $g+2\sigma_0\ln|x|\in C^1(B_{1}(0))$ easily.
As a consequence, we get the estimation (i) when $\hat{w}_k$ is  concentrate.
\bigskip

 In the left,  we  shall consider the behavior of     $\nabla \hat{v}_k$ on  $\partial B_{R}(0)$ for any fixed constant $R\gg1$, independent of $k$.

\medskip
 \noindent \textit{Step 2.} To prove the estimation (ii), we consider the following three cases (a)-(c) according to the behavior of $\hat{v}_k$:

\medskip
 \noindent (a)   $\hat{v}_k$ blows up.
\medskip

Let $\mathcal{B}_{v}$ be   the set of finite  blow  up points of $\hat{v}_k$ such that
\begin{equation}\label{def_of_Bv}\mathcal{B}_{v}=\Big\{p\in B_{\frac{r}{{\varepsilon_k}}}(0)\  | \ \exists \{x_k\},\ x_k\to p,\ \hat{v}_k(x_k)\to+\infty\ \textrm{as}\ k\to+\infty\Big\}.\end{equation} We denote the local mass at $p\in \mathcal{B}_{v}$ by
\begin{equation}\label{def_of_sigmav}\sigma_v(p)=\lim_{\delta\to0}\lim_{k\to+\infty}\frac{1}{2\pi}\int_{B_{\delta}(p)}\rho_2 \hat{h}({\varepsilon_k}x+p_{k_1})e^{\hat{v}_k(x)} \prod_{i=1}^{|\hat{S}_k|}|x-z_{k,i}|^{2\beta_i}dx.
\end{equation}

In view of \eqref{assumption} and Green representation formula \eqref{green}, it is not difficult to show that  $\hat{v}_k$ also has bounded oscillation  near each blow up point, i.e., for any blow up point  $p\in \mathcal{B}_v$   and a small fixed constant $r>0$, there is a constant $c_r>0$ such that  \begin{equation}\label{boundedoscillation}\sup_{x,y\in\partial B_r(p)}|\hat{v}_k(x)-\hat{v}_k(y)|\le c_r.\end{equation}

Since there might be  collapsing singularities $z_{k,j}$ in \eqref{3.9} as $k\to +\infty$, we need to consider the following two possibilities (a-i)-(a-ii):

\medskip
 \noindent (a-i) If  $\hat{v}_k\to-\infty$ in $C^0_{\textrm{loc}}(B_{\frac{r}{{\varepsilon_k}}}(0)\setminus \mathcal{B}_{v})$, then we see that \[m_0=\sum_{p\in \mathcal{B}_{v}} \sigma_v(p).\]
Moreover,  \eqref{assumption} and Green's representation formula \eqref{green} imply that as $k\to+\infty$,
\begin{equation}\label{3.21n}\nabla \hat{v}_k(x)\to  -\sum_{p\in \mathcal{B}_{v}}2\sigma_v(p)\frac{x-p}{|x-p|^2} \  \ \textrm{in}\ C^0_{\textrm{loc}}(B_{\frac{r}{{\varepsilon_k}}}(0)\setminus \mathcal{B}_{v} ). \end{equation}
So  the estimation (ii) holds for the case (a-i).

\medskip
 \noindent (a-ii) If there is a function $\hat{v}_0$ such that $\hat{v}_k\to \hat{v}_0$ in $C^1_{\textrm{loc}}(B_{\frac{r}{{\varepsilon_k}}}(0)\setminus \mathcal{B}_{v})$, then $\hat{v}_0$ satisfies
\begin{align*}
 \Delta \hat{v}_0  +2\rho_2 \hat{h}(0) e^{\hat{v}_0 (x)} \prod_{j=1}^{|\hat{S}_k|}|x-z_j |^{2\beta_j}=-4\pi\sum_{p\in \mathcal{B}_v}\sigma_v(p)\delta_p\   \textrm{in}\ \R^2.
\end{align*}
Since $e^{\hat{v}_0 (x)} \prod_{j=1}^{|\hat{S}_k|}|x-z_j |^{2\beta_j}\in L^1(\mathbb{R}^2)$, we see that if $p\neq z_j$, then  $\sigma_v(p)<1$, and if $p= z_j$, then  $\sigma_v(z_j)<1+\beta_j$. Here we note that if $p\neq z_j$, then  $\sigma_v(p)=0$  since there is no collapsing singularities near $p\neq z_j$ and thus $\sigma_v(p)\in2\mathbb{N}\cup\{0\}$ by \cite{bt1}.       By standard potential analysis (see \cite[Lemma 1.2]{cli}), we see that\[\hat{v}_0(x)=-(2\sum\beta_j+\alpha)\ln|x|+O(1)\ \textrm{ as}\  |x|\to+\infty,\ \textrm{ where}\  \alpha>2,\] and
\begin{align*}
 \int_{\R^2}2\rho_2 \hat{h}(0) e^{\hat{v}_0 (x)} \prod_{j=1}^{|\hat{S}_k|}|x-z_j |^{2\beta_j}=4\pi\Big(\sum_{j=1}^{|\hat{S}_k|} (\beta_j-\sigma_v(z_j))+\frac{\alpha}{2}\Big).
\end{align*}
Thus we get that \[m_0= \sum_{j=1}^{|\hat{S}_k|}\sigma_v(z_j)+\Big(\sum_{j=1}^{|\hat{S}_k|} (\beta_j-\sigma_v(z_j))+\frac{\alpha}{2}\Big)=\Big(\sum_{j=1}^{|\hat{S}_k|} \beta_j+\frac{\alpha}{2}\Big).\]
Moreover,   in view of \cite[Lemma 1.3]{cli}, we obtain as $|x|\to+\infty$,
\begin{equation*}\begin{aligned}
 &\nabla\Big(\hat{v}_0(x)+2\sum_{p\in \mathcal{B}_v}\sigma_v(p)\ln|x-p|\Big) \\&= -2\Big(\sum_{j=1}^{|\hat{S}_k|} (\beta_j-\sigma_v(z_j))+\frac{\alpha}{2}+o(1)\Big)\frac{x}{|x|^2} \ \textrm{as}\  |x|\to+\infty,
\end{aligned}\end{equation*}which implies
\begin{equation*}\begin{aligned}
 &\nabla \hat{v}_0(x)   = -2\Big(\sum_{j=1}^{|\hat{S}_k|} \beta_j +\frac{\alpha}{2}+o(1)\Big)\frac{x}{|x|^2}=-2 (m_0+o(1))\frac{x}{|x|^2} \   \textrm{as}\  |x|\to+\infty.
\end{aligned}\end{equation*}
Since $\hat{v}_k\to \hat{v}_0$ in $C^1_{\textrm{loc}}(B_{\frac{r}{{\varepsilon_k}}}(0)\setminus \mathcal{B}_{v})$, the estimation (ii) holds for the case  (a-ii).

\medskip
 \noindent (b) $\hat{v}_k\to-\infty$ uniformly on compact subsets of $B_{\frac{r}{{\varepsilon_k}}}(0)$ as $k\to+\infty$.\\
In this case, we can see that $m_0=0.$
By using \eqref{assumption} and Green's representation formula \eqref{green}, we get   that   $\lim_{k\to+\infty} \nabla \hat{v}_k =0$ { in} $C_{\textrm{loc}}^0(B_{\frac{r}{{\varepsilon_k}}}(0))$ { as} $k\to+\infty.$
So   the estimation (ii) holds for the case (b).

\medskip
 \noindent (c) $\hat{v}_k$ is locally uniformly bounded in $L^\infty_{\textrm{loc}}(B_{\frac{r}{{\varepsilon_k}}}(0))$ as $k\to+\infty$.\\
For this case, we can conclude  $\hat{v}_k$ converges to a function $\hat{v}$ in $C^2_{\textrm{loc}}(B_{\frac{r}{{\varepsilon_k}}}(0))$ as $k\to+\infty$, and
\begin{align*}
 \Delta \hat{v}  +2\rho_2 \hat{h}(0) e^{\hat{v}(x)} \prod_{j=1}^{|\hat{S}_k|}|x-z_j |^{2\beta_j}=0\ \textrm{in}\ \R^2.
\end{align*}
By standard potential analysis (see \cite[Lemma 1.2]{cli}), we see that \[\hat{v}(x)=-(2\sum\beta_j+\alpha)\ln|x|+O(1)\ \textrm{ as}\  |x|\to+\infty\  \textrm{where}\  \alpha>2,\] and
\begin{align*}
 \int_{\R^2}2\rho_2 \hat{h}(0) e^{\hat{v}(x)} \prod_{j=1}^{|\hat{S}_k|}|x-z_j |^{2\beta_j}=4\pi\Big(\sum_{j=1}^{|\hat{S}_k|} \beta_j+\frac{\alpha}{2}\Big).
\end{align*}Then
 $m_0= \sum_{j=1}^{|\hat{S}_k|} \beta_j+\frac{\alpha}{2}.$
Moreover, in view of \cite[Lemma 1.3]{cli}, we obtain
\begin{align}
 \nabla\hat{v}(x)= -2\Big(\sum_{j=1}^{|\hat{S}_k|} \beta_j+\frac{\alpha}{2} +o(1)\Big)\frac{x}{|x|^2}=-2(m_0+o(1))\frac{x}{|x|^2} \ \textrm{as}\  |x|\to+\infty.
\end{align}
So the estimation (ii) holds for the case (c).

\medskip

From the discussion in \textit{Step 1}-\textit{Step 2}, we  complete the proof of Lemma \ref{gradientlemma}.
\end{proof}

\medskip
 \noindent \textbf{Proof of \eqref{pohoidforlocalmass}.}
In the proof of Lemma \ref{gradientlemma}, we also get
\begin{itemize}
\item[(i)] Fix a constant $0<r\ll1$, independent of $k$. Then on $\partial B_r(0)$, we have either $\lim_{k\to+\infty}e^{\hat{w}_k}r^{2+2\sum_{j=1}^{|\hat{S}_k|}\beta_j}=0$, or $\lim_{k\to+\infty}e^{\hat{w}_k}r^{2+2\sum_{j=1}^{|\hat{S}_k|}\beta_j}= e^{\phi}r^{2+2\sum_{j=1}^{|\hat{S}_k|}\beta_j-2\sigma_0}$  {where}   $\sum_{j=1}^{|\hat{S}_k|}\beta_j+1>\sigma_0.$
    Thus  we  have \[\lim_{r\to0}\lim_{k\to+\infty}e^{\hat{w}_k}r^{2+2\sum_{j=1}^{|\hat{S}_k|}\beta_j}=0.\]
\item[(ii)] Fix a constant $R\gg1$, independent of $k$. Then on $\partial B_R(0)$, we have either $\lim_{k\to+\infty}e^{\hat{v}_k}R^{2+2\sum_{j=1}^{|\hat{S}_k|}\beta_j}=0$, or $\lim_{k\to+\infty}e^{\hat{v}_k}R^{2+2\sum_{j=1}^{|\hat{S}_k|}\beta_j}=O(R^{2-\alpha})$ where $\alpha>2.$
     Thus  we  have \[\lim_{R\to+\infty}\lim_{k\to+\infty}e^{\hat{v}_k}R^{2+2\sum_{j=1}^{|\hat{S}_k|}\beta_j}=0.\]
\end{itemize}
Then by letting $k\to+\infty$ in \eqref{3.11}, we get that
\[
\pi[(2\sigma_0)^2-(2m_0)^2]=(2+2\sum_{j=1}^{|\hat{S}_k|}\beta_j)4\pi[\sigma_0-m_0]+o(1)\ \textrm{as}\ r\to0,\ R\to+\infty.
\]
So we prove the claim \eqref{pohoidforlocalmass}.\hfill$\Box$

\medskip

 We see that  \eqref{pohoidforlocalmass} implies
\begin{equation}
\begin{aligned}\label{resultpohoid}
\sigma_0=m_0,\ \ \textrm{or}\ \ \sigma_0=2+ 2\sum_{j=1}^{|\hat{S}_k|}\beta_j-m_0.
\end{aligned}
\end{equation}

In  next lemma, we will show that $m_0\in 2\mathbb{N}\cup\{0\}$ and then Theorem \ref{thmeven}   follows  immediately.
\begin{lemma}\label{evenm0}$m_0\in2\mathbb{N}\cup\{0\}$.\end{lemma}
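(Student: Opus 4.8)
The plan is to prove Lemma \ref{evenm0} by strong induction on $N:=|\hat{S}_k|$, the fixed number of collapsing singularities, so that it becomes the engine of an inductive proof of Theorem \ref{thmeven} itself: the inductive hypothesis is that Theorem \ref{thmeven} holds whenever the number of collapsing singularities is strictly smaller than $N$. The base case $N=1$ is already settled in the proof of Theorem \ref{thmeven}, since with no collapse Theorem A together with \cite{bt1} forces each blow-up mass to be an even positive integer. For the inductive step the decisive structural fact is that, after the rescaling \eqref{3.009} with $\varepsilon_k=|p_{k_1}-p_{k_2}|$, we have $z_{k,1}=0$ and $z_{k,2}$ with $|z_{k,2}|=1$, so the limit points $z_1=0$ and $z_2=\lim_k z_{k,2}$ are distinct. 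Hence the singular points $z_{k,j}$ split into clusters converging to distinct limits, and each cluster contains at most $N-1$ of them; this is precisely what makes the induction well founded.

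Next I would run through the three regimes for $\hat v_k$ isolated in the proof of Lemma \ref{gradientlemma}. In case (b) the rescaled solution disappears in the limit, so $m_0=0$ and there is nothing to prove. In case (c) the solution $\hat v_k$ converges to an entire profile $\hat v$; setting $u=\hat v+\sum_j 2\beta_j\ln|x-z_j|+\ln(2\rho_2\hat h(0))$ turns the weighted Liouville equation into $\Delta u+e^u=4\pi\sum_j\beta_j\delta_{z_j}$ with $\beta_j\in\mathbb{N}$, so Theorem \ref{th3.3} yields $\int_{\R^2}e^u\in 8\pi\mathbb{N}$ and therefore $m_0=\sum_j\beta_j+\frac{\alpha}{2}\in 2\mathbb{N}$. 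In case (a-i), where all the mass concentrates, one has $m_0=\sum_{p\in\mathcal B_v}\sigma_v(p)$, and each $\sigma_v(p)$ is even: by \cite{bt1} when $p$ is an isolated, non-collapsing blow-up point, and by the inductive hypothesis applied to the cluster sitting at $p=z_j$ (a genuine instance of $(1.9)$ with fewer than $N$ singularities, whose hypotheses \eqref{assumption} hold thanks to the boundary oscillation bound \eqref{boundedoscillation}). Summing even integers gives $m_0\in 2\mathbb{N}\cup\{0\}$.

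The delicate case, and the one I expect to be the main obstacle, is (a-ii), where $\hat v_k$ simultaneously concentrates at the cluster points $z_j$ and converges to a nontrivial entire profile $\hat v_0$, so that $m_0=\sum_j\beta_j+\frac{\alpha}{2}$ must be decoded as a continuous contribution plus the concentrated masses $\sigma_v(z_j)$. My plan is again to absorb the singular sources by writing $u=\hat v_0+\sum_j2\beta_j\ln|x-z_j|+\ln(2\rho_2\hat h(0))$, so that $u$ solves $\Delta u+e^u=4\pi\sum_j(\beta_j-\sigma_v(z_j))\delta_{z_j}$ on $\R^2$. The key point is that each $\sigma_v(z_j)$ is an even nonnegative integer — by the inductive hypothesis for clusters of size at least two, by \cite{bt1} for isolated singularities — while $\sigma_v(z_j)<1+\beta_j$ forces $\sigma_v(z_j)\le\beta_j$, so the residual coefficients $\beta_j-\sigma_v(z_j)$ are honest nonnegative integers and Theorem \ref{th3.3} applies to $u$, giving $\sum_j(\beta_j-\sigma_v(z_j))+\frac{\alpha}{2}\in 2\mathbb{N}$. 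Writing
\[
m_0=\sum_j\beta_j+\frac{\alpha}{2}=\Big(\sum_j(\beta_j-\sigma_v(z_j))+\frac{\alpha}{2}\Big)+\sum_j\sigma_v(z_j),
\]
both summands on the right are even, which closes this case. The main difficulty is exactly this bookkeeping: verifying that the residual coefficients are nonnegative integers so that Theorem \ref{th3.3} is legitimately applicable, and checking that each cluster at $z_j$ really is a smaller instance of $(1.9)$ meeting the assumptions \eqref{assumption} so that the induction closes. Once Lemma \ref{evenm0} is established, combining $m_0\in 2\mathbb{N}\cup\{0\}$ with \eqref{resultpohoid} immediately gives $\sigma_0\in 2\mathbb{N}$ and completes Theorem \ref{thmeven}.
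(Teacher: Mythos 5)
Your proposal is correct and follows essentially the same route as the paper: induction on the number of collapsing singularities (made well-founded by $z_1=0\neq z_2$ after the rescaling), with the same case analysis — concentration handled by \cite{bt1} plus the inductive hypothesis at each cluster, and the non-concentrating profile handled by Theorem \ref{th3.3} after checking that the residual coefficients $\beta_j-\sigma_v(z_j)$ are nonnegative integers. The only cosmetic difference is that you phrase the base case as $N=1$ while the paper starts at $|\hat S_k|=2$; since a cluster of size one has no collapsing singularities, the two formulations coincide.
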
\begin{proof}
We note  that     if $|\hat{S}_k|=2$,   then  after scaling, there is no collapsing  singularities in \eqref{3.9} since $z_1=0$ and $|z_2|=1$.
On the other hand, if $|\hat{S}_k|>2$, then  even after scaling, there might be  collapsing  singularities $\{z_{k,i}\}$ in \eqref{3.9} as $k\to+\infty$.  Therefore,  we will prove  Theorem \ref{thmeven} by   mathematical induction method on $|\hat{S}_k|$.  First, we let  $|\hat{S}_k|=2$.

\medskip
 \noindent {\em{Step 1.}}  We assume that   $\hat{u}_k$ is a solution of $(1.9)$ satisfying \eqref{assumption} with
 $|\hat{S}_k|=2$.   Let $\varepsilon_k=\max\{|p_{k_i}-p_{k_j}|\ |\ p_{k_j}\neq p_{k_i}\in\hat{S}_k\}=|p_{k_1}-p_{k_2}|$ and
 \begin{align}\label{hatofvk}
\hat{v}_k(x)= \hat{u}_k({\varepsilon_k}x+p_{k_1}) -2\sum_{j} \beta_j\ln|{\varepsilon_k}x+p_{k_1}-p_{k_j}|  +(2+2\sum_{j}\beta_j)\ln{\varepsilon_k}.\end{align} Then $\hat{v}_k$ satisfies
\begin{align}\label{twosingular}
 \Delta \hat{v}_k +2\rho_2  \hat{h}({\varepsilon_k}x+p_{k_1})e^{\hat{v}_k(x)} \prod_{j}|x-z_{k,j}|^{2\beta_j} =0
\  \textrm{in}\ B_{\frac{1}{{\varepsilon_k}}}(0),
\end{align}where $z_{k,j}=\frac{p_{k_j}-p_{k_1}}{\varepsilon_k}$. Let $z_i=\lim_{k\to+\infty}z_{k,i}.$  Then $z_1=0$ and $|z_2|=1$. Since $z_1\neq z_2$ and $|\hat{S}_k|=2$, the equation \eqref{twosingular} has no collapsing singularities. Thus Brezis-Merle Theorem \cite[Theorem 3]{bm} implies there are three possible behaviors  (a)-(c) of $\hat{v}_k$:

\medskip
 \noindent (a) $\hat{v}_k$ blows up. Let  $\mathcal{B}_{v}$ be   the set of finite  blow  up points of $\hat{v}_k$  defined by \eqref{def_of_Bv}, and $\sigma_v(p)$ be the local mass at $p\in \mathcal{B}_{v}$  defined by \eqref{def_of_sigmav}.
Since  the equation \eqref{twosingular} has no collapsing singularities,  Brezis-Merle Theorem \cite[Theorem 3]{bm} implies $\hat{v}_k\to-\infty$ in $C^0_{\textrm{loc}}(B_{\frac{r}{{\varepsilon_k}}}(0)\setminus \mathcal{B}_{v})$. From  $\beta_j\in\mathbb{N}$ and Theorem A (see also \cite{bt1}),  we   get
\begin{equation*}\sigma_v(p)\in2\mathbb{N}\  \ \textrm{for} \ \ p\in \mathcal{B}_{v}.\end{equation*}
Then $m_0=\sum_{p\in \mathcal{B}_{v}} \sigma_v(p)\in 2\mathbb{N},$
which implies $\sigma_0 \in 2\mathbb{N}$ by \eqref{resultpohoid}.

\medskip
 \noindent (b) $\hat{v}_k\to-\infty$ uniformly on compact subsets of $B_{\frac{r}{{\varepsilon_k}}}(0)$ as $k\to+\infty$. Then $m_0=0.$

\medskip
 \noindent (c) $\hat{v}_k$ is locally uniformly bounded in $L^\infty_{\textrm{loc}}(B_{\frac{r}{{\varepsilon_k}}}(0))$ as $k\to+\infty$. Then  $\hat{v}_k$ converges to a function $\hat{v}$ in $C^2_{\textrm{loc}}(B_{\frac{r}{{\varepsilon_k}}}(0))$ as $k\to+\infty$,  where
\begin{align*}
 \Delta \hat{v}  +2\rho_2 \hat{h}(0) e^{\hat{v}(x)} \prod_j|x-z_j |^{2\beta_j}=0\ \textrm{in}\ \R^2,
\end{align*} and \[m_0=\frac{1}{2\pi}\int_{\R^2}\rho_2 \hat{h}(0) e^{\hat{v}(x)} \prod_j|x-z_j |^{2\beta_j}.\]
Since $\beta_j\in\mathbb{N}$,  Theorem \ref{th3.3} implies
\begin{align}\label{3.18}
 \int_{\R^2}2\rho_2 \hat{h}(0) e^{\hat{v}(x)} \prod_j|x-z_j |^{2\beta_j}=4\pi\Big(\sum_j \beta_j+\frac{\alpha}{2}\Big)\in8\pi\mathbb{N},\ \alpha>2,
\end{align}which implies $m_0=\Big(\sum_j \beta_j+\frac{\alpha}{2}\Big)\in 2\mathbb{N}.$

 \medskip

 Therefore, Lemma \ref{evenm0} is proved for  $|\hat{S}_k|=2$.

 \medskip
 \noindent {\em{Step 2.}}  We assume that that Lemma \ref{evenm0} holds if $|\hat{S}_k|\le n$, and suppose that
   $\hat{u}_k$ is a solution of $(1.9)$ satisfying \eqref{assumption} with $|\hat{S}_k|=n+1$. We do the same scaling as in the first step, and set $\hat{v}_k$ by \eqref{hatofvk}, which also satisfies \eqref{twosingular}.
If  $\hat{v}_k$ does not blow up, we can obtain $m_0\in2\mathbb{N}\cup\{0\}$ by using the same arguments for the case (b) and (c) in the first step.

In the left,  we consider the case  $\hat{v}_k$ blows up.
Let  $\mathcal{B}_{v}$ be   the set of finite  blow  up points of $\hat{v}_k$, and $\sigma_v(p)$ be
 the local mass at $p\in \mathcal{B}_{v}$.

 Note that  as in \eqref{boundedoscillation},  $\hat{v}_k$ also has bounded oscillation  near each blow up point.  Hence \eqref{assumption} holds at any blow up point of $\hat{v}_k$.

   Let $z_i=\lim_{k\to+\infty}z_{k,i}$.  From $z_1\neq z_2$,  we see that the number of collapsing singular points $z_{k,i}$ in \eqref{twosingular} is at most $n$.
  From our assumption,  Theorem \ref{thmeven} holds when the number of collapsing singularities $\le n$.
  So  the local mass $\sigma_v(p)$ at $p$ satisfies
\begin{equation}\label{evenblowup}\sigma_v(p)\in2\mathbb{N}\ \ \textrm{for}\ \ p\in \mathcal{B}_v.\end{equation}
Now we need to consider the following two cases (i)-(ii):

\medskip
 \noindent (i) If  $\hat{v}_k\to-\infty$ in $C^0_{\textrm{loc}}(B_{\frac{r}{{\varepsilon_k}}}(0)\setminus \mathcal{B}_{v})$, then $m_0=\sum_{p\in\mathcal{B}_v}\sigma_v(p)\in2\mathbb{N}$.

\medskip
 \noindent (ii) If there is a function $\hat{v}_0$ such that $\hat{v}_k\to \hat{v}_0$ in $C^0_{\textrm{loc}}(B_{\frac{r}{{\varepsilon_k}}}(0)\setminus \mathcal{B}_{v})$, then $\hat{v}_0$ satisfies
\begin{align*}
 \Delta \hat{v}_0  +2\rho_2 \hat{h}(0) e^{\hat{v}_0 (x)} \prod_{j=1}^{n+1}|x-z_j |^{2\beta_j}=-4\pi\sum_{p\in \mathcal{B}_v}\sigma_v(p)\delta_p\   \textrm{in}\ \R^2.
\end{align*}
The fact $e^{\hat{v}_0 (x)} \prod_{j=1}^{n+1}|x-z_j |^{2\beta_j}\in L^1(\mathbb{R}^2)$ implies that if $p\neq z_j$, then  $\sigma_v(p)<1$ and thus $\sigma_v(p)=0$ from \eqref{evenblowup}, and if $p= z_j$, then  $\sigma_v(z_j)<1+\beta_j$. Hence $\beta_j-\sigma_v(z_j)\in\mathbb{N}\cup\{0\}$.

Since $\beta_j, \sigma_v(z_j) \in\mathbb{N}$,   Theorem \ref{th3.3} implies
\begin{equation}\begin{aligned}\label{convergencemasss}
 &\int_{\R^2}2\rho_2 \hat{h}(0) e^{\hat{v}_0 (x)} \prod_{j=1}^{n+1}|x-z_j |^{2\beta_j}\\&=4\pi\Big(\sum_{j=1}^{n+1} (\beta_j-\sigma_v(z_j))+\frac{\alpha}{2}\Big)\in8\pi\mathbb{N},\ \alpha>2.
\end{aligned}\end{equation}Then by \eqref{evenblowup} and \eqref{convergencemasss}, we   get
$m_0=\sum_{j=1}^{n+1}  \sigma_v(z_j) + \Big(\sum_{j=1}^{n+1} (\beta_j-\sigma_v(z_j))+\frac{\alpha}{2}\Big)=  \sum_{j=1}^{n+1} \beta_j+\frac{\alpha}{2} \in 2\mathbb{N}.
$
Thus we complete the proof of Lemma \ref{evenm0}.
\end{proof}

For the proof of Theorem \ref{compactnessthm},  we also need to show that if $\hat{u}_k$ is non-concentrate, then    after the scaling as in \eqref{hatofvk},  the scaled function  $\hat{v}_k$      blows up as $k\to+\infty$.
Now we have the following lemma.
\begin{lemma}\label{le3.04}   Let $\hat{u}_k$ be a solution of $(1.9)$ satisfying \eqref{assumption}. We  set $\hat{v}_k$ by \eqref{hatofvk}, which   satisfies \eqref{twosingular}.
  If $\hat{u}_k$ is non-concentrate, then $\hat{v}_k$     blows up as $k\to+\infty$.
 \end{lemma}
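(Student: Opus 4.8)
The plan is to argue by contradiction: I will assume that $\hat u_k$ is non-concentrate yet $\hat v_k$ does \emph{not} blow up, and then derive an incompatibility between the $L^1$ constraint coming from the outer limit and the mass carried at the scale $\varepsilon_k$.

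First I would record what non-concentration supplies at the outer scale. Since $\hat u_k$ is non-concentrate, the sequence $\hat w_k$ falls exactly under Case~1 of \textit{Step 1} in the proof of Lemma \ref{gradientlemma}: one has $\hat w_k\to\hat w$ in $C^2_{\mathrm{loc}}(B_1(0)\setminus\{0\})$, the limit solves $\Delta\hat w+2\rho_2\hat h(x)|x|^{2\sum_j\beta_j}e^{\hat w}=-4\pi\sigma_0\delta_0$ in $B_1(0)$, and the integrability \eqref{generalL1} holds. This forces the strict inequality \eqref{3.8}, namely
\[
\sum_j\beta_j+1>\sigma_0,
\]
which is the relation I intend to contradict.

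Next I would invoke the Brezis--Merle trichotomy for the rescaled sequence $\hat v_k$ solving \eqref{twosingular}. If $\hat v_k$ does not blow up, then either (b) $\hat v_k\to-\infty$ locally uniformly, whence $m_0=0$; or (c) $\hat v_k$ stays locally bounded and converges to an entire solution, in which case the potential analysis carried out in case (c) of Lemma \ref{gradientlemma} (via Theorem \ref{th3.3}) yields $m_0=\sum_j\beta_j+\tfrac{\alpha}{2}$ with $\alpha>2$, so that $m_0>\sum_j\beta_j+1$. Both values of $m_0$ may be quoted directly from the analysis already performed.

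Finally I would close each alternative. In case (c), since $B_{\varepsilon_k R}(p_{k_1})\subset B_r(0)$ for all large $k$ one has $m_0\le\sigma_0$, and then $\sum_j\beta_j+1<m_0\le\sigma_0<\sum_j\beta_j+1$ by \eqref{3.8}, a contradiction. In case (b), $m_0=0$ inserted into the Pohozaev relation \eqref{resultpohoid} forces $\sigma_0=0$ or $\sigma_0=2+2\sum_j\beta_j$; the first is impossible because $\sigma_0>0$ (assumption (i)), and the second gives $\sigma_0>\sum_j\beta_j+1$, again contradicting \eqref{3.8}. Hence neither non-blow-up alternative survives, so $\hat v_k$ must blow up. I expect the only delicate bookkeeping to be the case $|\hat S_k|>2$, where \eqref{twosingular} may still carry collapsing singularities $z_{k,j}$; there one must be sure the identities $m_0=0$ and $m_0=\sum_j\beta_j+\tfrac{\alpha}{2}$ remain correct, which is precisely what the proof of Lemma \ref{gradientlemma} and the inductive step of Lemma \ref{evenm0} guarantee, leaving only the elementary inequalities above.
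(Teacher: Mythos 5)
Your proposal is correct and follows essentially the same route as the paper: both derive \eqref{3.8} from non-concentration, split the non-blow-up alternative into the two Brezis--Merle cases, compute $m_0=0$ or $m_0=\sum_j\beta_j+\frac{\alpha}{2}$ with $\alpha>2$, and contradict \eqref{3.8} via the Pohozaev relation \eqref{resultpohoid} in the first case and via $\sigma_0\ge m_0$ in the second. Your closing remark about collapsing singularities when $|\hat S_k|>2$ is a reasonable extra precaution, though in the locally-bounded case the limit equation simply has merged integer weights and Theorem \ref{th3.3} applies unchanged.
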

\begin{proof}If $\hat{u}_k$ is non-concentrate, we have \eqref{3.8}, that is, $1+\sum_j\beta_j>\sigma_0$.\\
If the scaled function $\hat{v}_k$  does not   blows up as $k\to+\infty$, then we have the following two cases (i)-(ii):

\medskip
 \noindent
(i) if  $\hat{v}_k\to-\infty$ uniformly on compact subsets of $B_{\frac{r}{{\varepsilon_k}}}(0)$ as $k\to+\infty$, then    $m_0=0.$
By \eqref{resultpohoid} and $\sigma_0>0$, we get that
$
 \sigma_0=2+2\sum_j\beta_j\in 2\mathbb{N},
$   which contradicts  \eqref{3.8}.

\medskip
 \noindent
(ii) If  $\hat{v}_k$ is locally uniformly bounded in $L^\infty_{\textrm{loc}}(B_{\frac{r}{{\varepsilon_k}}}(0))$ as $k\to+\infty$, then   \textbf{step 1}-(c) in the proof of Lemma \ref{evenm0} implies
$m_0=\Big(\sum_j \beta_j+\frac{\alpha}{2}\Big)$, where $\alpha>2$. So $\sigma_0\ge m_0>\sum_j \beta_j+1$, which contradicts \eqref{3.8}.

 Thus  we complete the proof of Lemma \ref{le3.04}.
\end{proof}

\vspace{1cm}

\section{The topological degree of shadow system}
We recall the following shadow system:
\begin{align*}
\left\{\begin{array}{l}
\Delta w +2\rho_2\left(\frac{h_2e^{w +4\pi K_{21} G(x,Q )}}{\int_Mh_2e^{w +4\pi K_{21} G(x,Q )}}-1\right)=0,~\int_M w =0,\\
\nabla\left(\log(h_1e^{\frac{K_{12}}{2}w })\right)\Big|_{x=Q}=0,\ Q\notin S_1.
\end{array}\right.\quad\quad\quad\quad\quad\quad\quad\quad\   (1.1)
\end{align*} In this section, we are going to compute the topological degree of \eqref{1.1} for $\rho_2\notin 4\pi\mathbb{N}$.
As we discussed in the introduction, to compute the degree of (\ref{1.1}), we need to consider the following deformation:
\begin{align*}
\ \left\{\begin{array}{l}
\Delta w_t+2\rho_2\left(\frac{h_2e^{w_t+4\pi K_{21} G(x,Q_t)}}{\int_Mh_2e^{w_t+4\pi K_{21} G(x,Q_t)}}-1\right)=0,\ \ \int_M w_t=0,\\
\nabla\Big(\log h_1^*-4\pi\sum_{p\in S_1}\alpha_{p,1}G(x,p)+ \frac{t}{2}K_{12}w_t\\-4\pi(1-t)\sum_{p\in S_2\setminus S_1}G(x,p)\Big)\Big|_{x=Q_t}=0,
\end{array}\right.\quad\quad\quad\quad\quad\quad\  (1.10)_t
\end{align*} where $t\in[0,1)$.
We note that for fixed $t\in [0,1)$, any solution $(w_t,Q_t)$ of $(1.10)_t$ belongs to $\mathring{H}^1(M)\times [M\setminus (S_1\cup S_2)].$

\begin{proposition} For each fixed $t\in[0,1)$, there are $C_t, \delta_t>0$  such that for all solutions $(w_t,Q_t)$ of $(1.10)_t$ satisfies
\begin{equation}\label{03.1}
\|w_t\|_{C^1(M)}<C_t,~\mathrm{dist}(Q_t,S_1\cup S_2)>\delta_t>0.
\end{equation}\end{proposition}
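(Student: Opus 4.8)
The plan is to run the contradiction argument of Theorem \ref{compactnessthm} essentially verbatim, using the crucial structural fact that the \emph{first} equation of $(1.10)_t$ is identical to the first equation of $(\tilde S)$; only the balance condition has changed. Fix $t\in[0,1)$ and suppose, for contradiction, that there is a sequence of solutions $(w_{t,k},Q_{t,k})$ of $(1.10)_t$ along which $\|w_{t,k}\|_{C^1(M)}\to+\infty$ or $\mathrm{dist}(Q_{t,k},S_1\cup S_2)\to0$. Set $\tilde w_{t,k}=w_{t,k}-\log\int_M h_2 e^{w_{t,k}+4\pi K_{21}G(x,Q_{t,k})}$ and, passing to a subsequence, let $Q_{t,k}\to Q_0$. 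I would first show $\sup_M\tilde w_{t,k}\le c$, i.e. no blow up. Because the first equation is unchanged, the whole of Steps 1--5 of Theorem \ref{compactnessthm} transfers word for word: the reduction to $Q_0\in S_2$ via Theorem A, the identity $\mathcal B=\{Q_0\}$, the exclusion of concentration by Theorem \ref{thmeven} together with $\rho_2\notin4\pi\mathbb N$, the estimate \eqref{3.29}, the rescaling \eqref{definition_of_vk}, the blow up of $v_k$ (Lemma \ref{le3.04}), the determination of $\mathcal B_v$, and the positivity \eqref{3.036} (Lemma \ref{lemmaforclaim}). None of these uses the second equation, so only Step~6 must be redone.

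In Step~6 I would recompute $\varepsilon_k\nabla w_{t,k}(Q_{t,k})$ from the new balance condition, whose only novelties are the factor $\tfrac t2$ in front of $K_{12}w_t$ and the extra term $-4\pi(1-t)\sum_{p\in S_2\setminus S_1}G(x,p)$. Multiplying the balance condition by $\varepsilon_k$ and letting $k\to+\infty$ exactly as in \eqref{3.037} and \eqref{3.40}, the singular part of $-4\pi(1-t)\nabla G(Q_{t,k},Q_0)$ contributes $2(1-t)e$ precisely when $Q_0\in S_2\setminus S_1$, while the $S_1$-sum contributes $2\alpha_{Q_0,1}e$ precisely when $Q_0\in S_1\cap S_2$. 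Since $Q_0\in S_2$, exactly one alternative holds; write $c_0:=1-t>0$ in the first case and $c_0:=\alpha_{Q_0,1}\ge1$ in the second. In the case $\mathcal B_v\cap\{0,e\}=\emptyset$, using $|e|=1$ and the complex form $\tfrac{e-q_i}{|e-q_i|^2}=\tfrac1{\bar e-\bar q_i}$, this yields
\[
0=c_0-tK_{12}\sum_{q_i\in\mathcal B_v}\frac{e}{e-q_i}.
\]
By \eqref{3.036} the sum $\sum_{q_i}\tfrac{e}{e-q_i}$ is real and strictly positive, so with $c_0>0$, $t\ge0$ and $K_{12}<0$ the right-hand side is strictly positive, a contradiction. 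In the remaining case $\mathcal B_v=\{0\}$, \eqref{gradient_conv} gives $\varepsilon_k\nabla w_{t,k}(Q_{t,k})\to-4(\alpha_{Q_0,2}+1)e$, so the balance condition forces $0=2\big(c_0-t(\alpha_{Q_0,2}+1)K_{12}\big)e$ with strictly positive bracket, hence $e=0$, contradicting $|e|=1$. Thus $\tilde w_{t,k}$ cannot blow up, and by Green's representation \eqref{3.42} we obtain $\|w_{t,k}\|_{C^1(M)}\le C_t$, excluding the first failure mode.

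With $w_{t,k}$ now uniformly bounded in $C^1(M)$, I would rule out $Q_{t,k}\to Q_0\in S_1\cup S_2$ directly from the unscaled balance condition: the terms $\nabla\log h_1^*$ and $\tfrac t2 K_{12}\nabla w_{t,k}$ remain bounded, whereas if $Q_0\in S_1$ the term $-4\pi\alpha_{Q_0,1}\nabla G(Q_{t,k},Q_0)$ has modulus $\sim\tfrac{2\alpha_{Q_0,1}}{|Q_{t,k}-Q_0|}\to+\infty$, and if $Q_0\in S_2\setminus S_1$ the term $-4\pi(1-t)\nabla G(Q_{t,k},Q_0)$ has modulus $\sim\tfrac{2(1-t)}{|Q_{t,k}-Q_0|}\to+\infty$, the latter nonzero exactly because $t<1$. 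Either way the balance condition fails for large $k$, giving $\mathrm{dist}(Q_{t,k},S_1\cup S_2)\ge\delta_t>0$; this is precisely the role of the inserted term $-4\pi(1-t)\sum_{p\in S_2\setminus S_1}G$, which supplies the missing control over $S_2\setminus S_1$. The main obstacle is the middle paragraph: one must verify that inserting $2(1-t)e$ and replacing $\tfrac12$ by $\tfrac t2$ does not destroy the strict sign obtained in Step~6. This is exactly where $t<1$ is used — the new term $2(1-t)e$ enters with the same sign as the original $2\alpha_0 e$, so the constant $c_0$ stays positive and the contradiction persists for every $t\in[0,1)$, with $C_t,\delta_t$ allowed to degenerate only as $t\to1^-$.
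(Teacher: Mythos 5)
Your proposal is correct and follows essentially the same route as the paper: the paper's own (much terser) proof likewise observes that the first equation is unchanged, that all Green-function coefficients in the new balance condition carry the same sign because $\alpha_{p,1}>0$ and $1-t>0$, and then reruns Step 6 of Theorem \ref{compactnessthm} to exclude blow up before reading the distance bound off the balance condition. Your rederivation of the limits $2\alpha_{Q_0,1}e$, $2(1-t)e$ and $-2tK_{12}\sum_{q_i}\frac{e-q_i}{|e-q_i|^2}$, and the resulting strict positivity, is exactly the sign argument the paper invokes via \eqref{3.038} and \eqref{3.40}.
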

\begin{proof}Suppose that there is  a sequence of solutions $(w_{t,k},Q_{t,k})$ of $(1.10)_t$ such that $w_{t,k}$ blows up as $k\to+\infty$. We note that the  coefficients of Green functions in $(1.10)_t$ have the same sign since $\alpha_{p,1}>0$   and $(1-t)>0$. Then, following  the same arguments  in the \textit{Step 6} in the proof of  Theorem \ref{compactnessthm}, we get a contradiction by noting  the same sign of   $\nabla w_{t,k}(Q_{t,k})$ and  $\nabla G(Q_{t,k},p)$, $p\in S_1\cup S_2$ (see \eqref{3.038} and  \eqref{3.40}).
So $w_{t,k}$ is uniformly bounded. Then    \eqref{03.1} follows from  the balance condition in $(1.10)_t$.\end{proof}

Moreover, we have the following    compactness result  for the solutions of   $(1.10)_t$ for  any $t\in[0,1)$.
\begin{proposition}
\label{prop4.01}
There are constants $C, \delta>0$ such that any solution  $(w_t,Q_t)$ of $(1.10)_t$ for $t\in[0,1)$ satisfies
 \begin{equation}\label{compact}
 \|w_t\|_{C^1(M)}<C,~\mathrm{dist}(Q_t,S_1)>\delta>0.
\end{equation}
\end{proposition}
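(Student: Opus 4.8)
The plan is to argue by contradiction and to reduce the whole statement to the blow-up analysis already performed in the proof of Theorem \ref{compactnessthm}, checking at each step that the resulting bounds are insensitive to $t$. Suppose the conclusion fails: then there exist $t_k\in[0,1)$ and solutions $(w_k,Q_k):=(w_{t_k},Q_{t_k})$ of $(1.10)_{t_k}$ for which either $\|w_k\|_{C^1(M)}\to+\infty$ or $\mathrm{dist}(Q_k,S_1)\to0$. Passing to subsequences I would assume $t_k\to t_*\in[0,1]$ and $Q_k\to Q_0\in M$, and, as in Theorem \ref{compactnessthm}, set $\tilde w_k=w_k-\log\int_M h_2e^{w_k+4\pi K_{21}G(x,Q_k)}$, so that $\tilde w_k$ solves the first equation of $(\tilde{S})$. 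The argument then splits according to whether $\tilde w_k$ blows up.

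If $\tilde w_k$ stays bounded from above along the sequence, then Green's representation \eqref{3.42}, whose kernel does not depend on $t$, yields a uniform $C^1$ bound on $w_k$; hence the failure must be $\mathrm{dist}(Q_k,S_1)\to0$, i.e.\ $Q_0\in S_1$. Since $\nabla w_k(Q_k)$ is then bounded and $Q_0\notin S_2\setminus S_1$, the only unbounded contribution to the balance condition of $(1.10)_{t_k}$ is the term $2\alpha_{Q_0,1}\frac{Q_k-Q_0}{|Q_k-Q_0|^2}$, whose magnitude diverges and cannot be cancelled by the bounded remaining terms; this contradicts the balance condition and rules out $Q_0\in S_1$ in this regime.

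If instead $\tilde w_k$ blows up, I would repeat Steps 1--6 of Theorem \ref{compactnessthm}. Theorem A excludes blow-up in the absence of collapsing singularities, forcing $Q_0\in S_2$; Theorem \ref{thmeven} together with $\rho_2\notin4\pi\mathbb{N}$ then excludes both concentration and blow-up points other than $Q_0$, leaving the non-concentrating case with the bound \eqref{3.29}. Rescaling as in \eqref{definition_of_vk} produces $v_k$ solving \eqref{equationofvk}, which blows up by Lemma \ref{le3.04} at a set $\mathcal{B}_v$ with either $\mathcal{B}_v\cap\{0,e\}=\emptyset$ or $\mathcal{B}_v=\{0\}$. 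Computing $\varepsilon_k\nabla w_k(Q_k)=\nabla v_k(e_k)$ through \eqref{3.330} and \eqref{gradient_conv} and inserting it into the balance condition of $(1.10)_{t_k}$ reproduces \eqref{3.037}, \eqref{3.038} and \eqref{3.40}, except that the coefficient of $K_{12}\nabla w_k$ is now $\tfrac{t_k}{2}$ and an extra contribution $2(1-t_k)\mathbf{1}_{\{Q_0\in S_2\setminus S_1\}}e$ appears from $-4\pi(1-t_k)\nabla G(Q_k,\cdot)$. Because $\alpha_{p,1}>0$, $-K_{12}>0$ and $1-t_k>0$, every surviving term carries the same sign — exactly the observation used in the preceding proposition — so the limiting identity has the form $0=(\text{a strictly positive multiple of }e)$, forcing $e=0$ and contradicting $|e|=1$; the case $\mathcal{B}_v\cap\{0,e\}=\emptyset$ is concluded identically using \eqref{3.036} and Lemma \ref{lemmaforclaim}.

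The main obstacle is the uniformity as $t_k\to1^-$, where the deformation degenerates while $Q_k$ may simultaneously collapse onto some $Q_0\in S_2\setminus S_1$. There one must control the product $(1-t_k)\nabla G(Q_k,Q_0)$, which does not vanish pointwise (cf.\ $(1.11)$) but which, after multiplication by the blow-up scale $\varepsilon_k=|Q_k-Q_0|$, contributes only the bounded term $2(1-t_k)e\to2(1-t_*)e$. Thus for $t_*=1$ this term disappears and the balance identity coincides exactly with that of the original system, recovering the contradiction of Theorem \ref{compactnessthm}, while for $t_*<1$ it persists but has the same sign as the remaining singular terms and therefore only reinforces the contradiction. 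Verifying that this extra term never cancels the leading singular contributions, uniformly in $t_k\in[0,1)$, is the crux of the proof, and it is precisely what the consistent sign of all the Green-function coefficients guarantees.
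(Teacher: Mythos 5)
Your proposal is correct and follows essentially the same route as the paper: the paper's (very terse) proof likewise reduces the uniform $C^1$ bound to the Step 6 sign analysis of Theorem \ref{compactnessthm}, observing that all Green-function coefficients in $(1.10)_t$ keep the same sign for every $t\in[0,1)$, and then obtains the uniform lower bound on $\mathrm{dist}(Q_t,S_1)$ from the balance condition because the coefficient $\alpha_{p,1}$ (unlike $1-t$) does not degenerate as $t\to1^-$. Your write-up simply supplies the details that the paper leaves implicit, including the correct bookkeeping of the extra $2(1-t_k)e$ term after rescaling.
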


\begin{proof}Since $t\in[0,1)$, we  can get $\|w_t\|_{C^1(M)}$ is uniformly bounded for all $t\in[0,1)$ as in the proof of \eqref{03.1}. On the other hand,
in view of $\lim_{t\to 1^-}(1-t)=0$,  it might be possible that the balance condition $(1.10)_t$  holds even though $Q_t$  converges to a point in  $S_2\setminus S_1$ as $t\to1^-$. So we conclude that
any solution  $(w_t,Q_t)$ of $(1.10)_t$ for $t\in[0,1)$  satisfies \eqref{compact}.
\end{proof}
In the following proposition, we shall meet   two possibilities  according to the behavior of $Q_t$ as $t\to 1^-$.
\begin{proposition}
\label{prop4.1}
Let  $(w_t,Q_t)$ be  a  family of solutions  of $(1.10)_t$. Then we have either

\noindent (i)   $Q_t\to Q\in M\setminus[S_1\cup S_2]$  and $w_t\to w_Q$ as $t\to1^-$, where $(w_Q,Q)$ satisfies \eqref{1.1},

or
\medskip

\noindent (ii)   $Q_t\to Q\in S_2\setminus S_1$ and $w_t\to w_Q$ as $t\to1^-$, where  $(w_Q,Q)$ satisfies
\begin{align}
&\Delta w_Q+2\rho_2\left(\frac{h_2e^{w_Q+4\pi K_{21} G(x,Q)}}{\int_Mh_2e^{w_Q+4\pi K_{21} G(x,Q)}}-1\right)=0,  \label{4.1}\\
  &\nabla\left(\log h_1+\frac12K_{12}w_Q\right)\Big|_{x=Q}=\lambda_{w_Q}e_{w_Q},\ Q\notin S_1,\label{4.1b}
\end{align}
where $(\lambda_{w_Q},e_{w_Q})$ satisfies $ \lambda_{w_Q} \ge0, \ e_{w_Q}\in \mathbb{R}^2, |e_{w_Q}|=1$, and
$$4\pi(1-t)\nabla\Big(\sum_{p\in S_2\setminus S_1}G(x,p)\Big)\Big|_{x=Q_t}\to\lambda_{w_Q} e_{w_Q}~\textrm{ as}~t\to1^-.$$
\end{proposition}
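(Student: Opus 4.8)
The plan is to argue by compactness: extract convergent subsequences from the uniformly bounded family furnished by Proposition~\ref{prop4.01}, and then pass to the limit separately in the elliptic equation and in the balance condition, the outcome being dictated by where the limit of $Q_t$ lands. First I would fix an arbitrary sequence $t_k\to1^-$; Proposition~\ref{prop4.01} gives $\|w_{t_k}\|_{C^1(M)}\le C$ and $\mathrm{dist}(Q_{t_k},S_1)\ge\delta>0$, so along a subsequence $Q_{t_k}\to Q$ with $Q\in M\setminus S_1$. Since $Q\notin S_1$, exactly one of the alternatives holds according to whether $Q\in M\setminus(S_1\cup S_2)$ or $Q\in S_2\setminus S_1$.

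The first real step is to promote the $C^1$ bound to $C^{1,\beta}$ convergence of $w_{t_k}$. Writing the nonlinear weight in the first equation of $(1.10)_t$ using $h_2=h_2^*e^{-4\pi\sum_{p\in S_2}\alpha_{p,2}G(x,p)}$ with $h_2^*>0$, I would check that near the (possibly collapsing) pair $\{Q,Q_{t_k}\}$ it behaves like $|x-Q|^{2\alpha_{Q,2}}|x-Q_{t_k}|^{-2K_{21}}$, both exponents being nonnegative since $K_{21}<0$ and $\alpha_{Q,2}\ge0$ (with the convention $\alpha_{Q,2}=0$ when $Q\notin S_2$). This shows the density $h_2e^{w_{t_k}+4\pi K_{21}G(x,Q_{t_k})}/\int_M h_2e^{w_{t_k}+4\pi K_{21}G(x,Q_{t_k})}$ is uniformly bounded in $L^\infty(M)$, the denominator being bounded away from zero because the integrand is uniformly bounded below on a fixed region away from the singularities. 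Hence $\Delta w_{t_k}$ is uniformly bounded in $L^\infty(M)$, so $w_{t_k}$ is bounded in $W^{2,p}(M)$ for every $p$, and after a further subsequence $w_{t_k}\to w_Q$ in $C^{1,\beta}(M)$ with $\int_M w_Q=0$. Since $e^{4\pi K_{21}G(\cdot,Q_{t_k})}\to e^{4\pi K_{21}G(\cdot,Q)}$ in $L^1(M)$ by dominated convergence, I can pass to the limit in the first equation, obtaining \eqref{4.1} (which reduces to the first equation of \eqref{1.1} when $Q\notin S_2$).

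The decisive step is the balance condition, where the two cases split. Using $h_1=h_1^*e^{-4\pi\sum_{p\in S_1}\alpha_{p,1}G(x,p)}$, the second equation of $(1.10)_t$ reads
\begin{equation*}
\nabla\Big(\log h_1+\tfrac{t_k}{2}K_{12}w_{t_k}\Big)\Big|_{x=Q_{t_k}}=4\pi(1-t_k)\,\nabla\Big(\sum_{p\in S_2\setminus S_1}G(x,p)\Big)\Big|_{x=Q_{t_k}}.
\end{equation*}
Because $Q\notin S_1$, $h_1$ is smooth near $Q$, and together with $w_{t_k}\to w_Q$ in $C^{1,\beta}$ and $t_k\to1$ the left-hand side converges to the finite vector $\nabla(\log h_1+\tfrac12K_{12}w_Q)|_{x=Q}$. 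If $Q\in M\setminus(S_1\cup S_2)$, then $\nabla\sum_{p\in S_2\setminus S_1}G(\cdot,p)$ stays bounded near $Q$, so the right-hand side is $O(1-t_k)\to0$ and the limit is precisely the balance condition of \eqref{1.1}, giving alternative (i). If $Q\in S_2\setminus S_1$, the finiteness of the left-hand side forces the right-hand side to remain bounded; writing $Q_{t_k}-Q=s_ke_k$ with $s_k=|Q_{t_k}-Q|\to0^+$ and $|e_k|=1$, its singular part is $-\tfrac{2(1-t_k)}{s_k}e_k+o(1)$, whose coefficient is nonnegative, so a last subsequence makes the right-hand side converge to a vector $\lambda_{w_Q}e_{w_Q}$ with $\lambda_{w_Q}\ge0$, $|e_{w_Q}|=1$; this yields \eqref{4.1b} and the asserted limit of $4\pi(1-t)\nabla\sum_{p\in S_2\setminus S_1}G(x,p)|_{x=Q_t}$, i.e.\ alternative (ii).

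The hard part will be the first step: since the Green singularity at $Q_{t_k}$ collapses onto the source singularity of $h_2$ at $Q\in S_2$, one must rule out concentration of the nonlinear density and obtain its uniform $L^\infty$ bound, so that $\nabla w_{t_k}$ converges uniformly \emph{up to} the collapsing point $Q$. Only this $C^1$ convergence legitimizes evaluating the balance condition at $Q_{t_k}$ and extracting a finite residual; the sign $\lambda_{w_Q}\ge0$ is then automatic from $(1-t_k)>0$ and from the dominant term pointing along $-e_k$.
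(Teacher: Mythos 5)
Your argument is correct and is essentially the paper's approach: the paper disposes of Proposition \ref{prop4.1} in one line as "a simple consequence of Proposition \ref{prop4.01}", and what you have written is precisely the omitted content — extract a subsequence using the uniform bounds of Proposition \ref{prop4.01}, upgrade to $C^{1,\beta}$ convergence via the uniform $L^\infty$ bound on the density (valid despite the collapsing singularity because $-2K_{21}\ge 0$ and $\alpha_{Q,2}\ge 0$), and pass to the limit in the equation and in the balance condition, with the dichotomy governed by whether the residual term $-2(1-t_k)(Q_{t_k}-Q)/|Q_{t_k}-Q|^2$ is present. The sign analysis giving $\lambda_{w_Q}\ge 0$ and the identification of the limit of $4\pi(1-t)\nabla\sum_{p\in S_2\setminus S_1}G(\cdot,p)|_{Q_t}$ match what the paper uses later in Lemmas \ref{le4.2} and \ref{le4.3}.
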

\begin{proof}
  Proposition \ref{prop4.1} is a simple consequence of  Proposition \ref{prop4.01}.
\end{proof}

\noindent

By using transversality theorem  (for example, see \cite[Theorem 4.1]{llwy}), we can always choose a  function   $h_2$ satisfying the following condition  (C1):

\medskip
 \noindent
 (C1) For any point $Q\in S_2\setminus S_1$, all the solutions  $w_Q$  {of  }  \eqref{4.1}  { are non-degenerate}.

\medskip
 \noindent
We note that the set $S_2\setminus S_1$ is fixed and $|S_2\setminus S_1|<+\infty$.
By (C1), there are finitely many solutions $w_Q$ of \eqref{4.1}. Moreover, any solution $w_Q$ of \eqref{4.1} is independent of $h_1$. So we can always choose a   function   $h_1$ satisfying the following condition  (C2):

\medskip
 \noindent
  (C2)  For any point $Q\in S_2\setminus S_1$, $\nabla\left(\log h_1+\frac12K_{12}w_Q\right)\Big|_{x=Q}\neq0$.

\medskip
 \noindent
Throughout the rest of this section, we choose  $h_1$ and $h_2$ such that (C1) and (C2) hold.

\medskip

For any point $Q\in S_2\setminus S_1$, we consider
\begin{align}
\label{4.3}
\Delta w_Q +2\rho_2\left(\frac{h_2e^{w_Q+4\pi K_{21} G(x,Q)}}{\int_Mh_2e^{w_Q+4\pi K_{21} G(x,Q)}}-1\right)=0.
\end{align}
For each non-degenerate solution $w_Q$ of \eqref{4.3}, let $(\lambda_{w_Q},e_{w_Q})$ be the pair satisfying
\begin{align}
\label{4.4}
\left\{\begin{array}{l}
\nabla\left(\log h_1+\frac{1}{2}K_{12}w_Q\right)\Big|_{x=Q}=\lambda_{w_Q}e_{w_Q},\\
\lambda_{w_Q}>0, \ e_{w_Q}\in \mathbb{R}^2,\ |e_{w_Q}|=1.
\end{array}\right.
\end{align}
For some fixed $\mathfrak{p}> 2$, we define $\|\phi\|_*=\|\phi\|_{W^{2,\mathfrak{p}}(M)}$. Let $\Gamma_{w_Q,t}$ be
\begin{equation}
\label{4.5}
\begin{aligned}
\Gamma_{w_Q,t}:=\Big\{(w_t,Q_t)~\Big|~w_t=w_Q+\phi_t,\ \int_{M}\phi_t=0,\  \|\phi_t\|_{*}\le M_0(1-t),\\
Q_t=Q-\frac{2(1-t)}{\lambda_t}e_t,\  e_t=\frac{Q-Q_t}{|Q-Q_t|},\  |Q_t-Q|=\frac{2(1-t)}{\lambda_t},\ \\
\frac{1}{2}\le\frac{\lambda_t}{\lambda_{w_Q}}\le 2,\ |\lambda_te_t-\lambda_{w_Q}e_{w_Q}|\le M_0(1-t) \Big\},
\end{aligned}
\end{equation}
where $M_0>0$ is a large number, which is determined later.
\medskip

\noindent We have the following a priori estimate for the family of solutions $(w_t,Q_t)$ of $(1.10)_t$, if $(w_t,Q_t)$ satisfies Proposition \ref{prop4.1}-(ii).

\begin{lemma}
\label{le4.2} Assume (C1) and (C2).
Let $(w_t,Q_t)$ be a family of solutions of $(1.10)_t$ satisfying Proposition \ref{prop4.1}-(ii). Then there are some function $w_Q$ and constant $\varepsilon>0$ such that $w_Q$ is a solution of \eqref{4.3} and if $|t-1|<\varepsilon$, then $(w_t,Q_t)\in \Gamma_{w_Q,t}$.
\end{lemma}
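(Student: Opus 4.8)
The plan is to show that a family $(w_t,Q_t)$ satisfying Proposition \ref{prop4.1}-(ii) approaches the limiting data $(w_Q,\lambda_{w_Q},e_{w_Q})$ of \eqref{4.3}--\eqref{4.4} at exactly the rate $O(1-t)$ encoded in the definition \eqref{4.5} of $\Gamma_{w_Q,t}$. By Proposition \ref{prop4.1}-(ii) we have $Q_t\to Q\in S_2\setminus S_1$ and $w_t\to w_Q$, where $w_Q$ solves \eqref{4.1} (equivalently \eqref{4.3}); assumption (C1) makes $w_Q$ non-degenerate and (C2) forces $\lambda_{w_Q}>0$ in \eqref{4.4}. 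Set $\phi_t=w_t-w_Q$. I note first that, since $Q\notin S_1$ and $Q\in S_2$ with $\alpha_{Q,2}\ge1$, the right-hand side of \eqref{4.3} vanishes to order $\ge 2\alpha_{Q,2}-2K_{21}\ge4$ at $Q$, so $w_Q\in C^2$ near $Q$ and $\log h_1$ is smooth near $Q$; both gradients are therefore Lipschitz in a neighborhood of $Q$, a fact I will use repeatedly.

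First I would control $\phi_t$ by $|Q_t-Q|$. The first equations of $(1.10)_t$ and \eqref{4.3} differ only through $G(\cdot,Q_t)$ versus $G(\cdot,Q)$; because $Q'\mapsto e^{4\pi K_{21}G(\cdot,Q')}$ is smooth and uniformly bounded (indeed vanishing at $Q'$, as $-2K_{21}>0$), the nonlinear operator $w\mapsto\Delta w+2\rho_2(\cdots)$ depends on $Q'$ with Lipschitz modulus $O(|Q'-Q|)$ in $L^{\mathfrak p}$. The non-degeneracy of $w_Q$ from (C1) makes the linearized operator invertible on $\{\phi:\int_M\phi=0\}$, so for $t$ close to $1$ the implicit function theorem produces a unique solution branch near $w_Q$, forces $w_t$ to lie on it, and yields
\[
\|\phi_t\|_*=\|w_t-w_Q\|_{W^{2,\mathfrak p}(M)}\le C\,|Q_t-Q|.\qquad(\star)
\]

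Next I would extract the leading term of the balance condition, which reads $\nabla(\log h_1+\tfrac t2 K_{12}w_t)|_{x=Q_t}=4\pi(1-t)\nabla\big(\sum_{p\in S_2\setminus S_1}G(\cdot,p)\big)|_{x=Q_t}$. Splitting off the singular part $-\tfrac1{2\pi}\log|x-Q|$ of the Green function at $p=Q$ and setting $e_t=\tfrac{Q-Q_t}{|Q-Q_t|}$, $\lambda_t=\tfrac{2(1-t)}{|Q_t-Q|}$, one computes
\[
\nabla\Big(\log h_1+\tfrac t2 K_{12}w_t\Big)\Big|_{x=Q_t}=\lambda_t e_t+\vec E_t,\qquad \vec E_t=O(1-t),
\]
where $\vec E_t$ gathers the regular part $R(\cdot,Q)$ and the smooth contributions of the remaining $p\in S_2\setminus S_1$. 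On the other hand, Taylor-expanding the left-hand side about $(1,w_Q,Q)$, using the Lipschitz bounds above, the estimate $(\star)$, and the Sobolev embedding $W^{2,\mathfrak p}\hookrightarrow C^1$ gives
\[
\nabla\Big(\log h_1+\tfrac t2 K_{12}w_t\Big)\Big|_{x=Q_t}=\lambda_{w_Q}e_{w_Q}+O(1-t)+O(|Q_t-Q|).
\]
Comparing the two displays yields $\lambda_t e_t=\lambda_{w_Q}e_{w_Q}+O(1-t)+O(|Q_t-Q|)$; taking norms and letting $t\to1^-$ (so $|Q_t-Q|\to0$) forces $\lambda_t=\tfrac{2(1-t)}{|Q_t-Q|}\to\lambda_{w_Q}>0$, hence $|Q_t-Q|\asymp(1-t)$. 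Feeding $|Q_t-Q|=O(1-t)$ back into the error terms collapses them all to $O(1-t)$, so $|\lambda_t e_t-\lambda_{w_Q}e_{w_Q}|\le M_0(1-t)$ and, via $(\star)$, $\|\phi_t\|_*\le M_0(1-t)$, while $\tfrac12\le\lambda_t/\lambda_{w_Q}\le2$ and $|Q_t-Q|=\tfrac{2(1-t)}{\lambda_t}$ all hold for $|t-1|<\varepsilon$ with $M_0$ large; these are precisely the membership conditions of $\Gamma_{w_Q,t}$.

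I expect the main obstacle to be the apparent circularity between the two rate estimates: $(\star)$ bounds $\|\phi_t\|_*$ by $|Q_t-Q|$, whereas pinning down $|Q_t-Q|\asymp(1-t)$ seems to need control of $\phi_t$. I would break this by establishing $(\star)$ unconditionally from (C1) and the implicit function theorem — valid once $w_t$ is near $w_Q$, which is exactly what Proposition \ref{prop4.1}-(ii) provides — and only afterwards invoking the balance condition together with the soft convergence $|Q_t-Q|\to0$ to deduce the sharp ratio $\tfrac{2(1-t)}{|Q_t-Q|}\to\lambda_{w_Q}$. A secondary technical point is the careful separation of the singular and regular parts of $\sum_{p\in S_2\setminus S_1}G(x,p)$ at $Q_t$, where (C2)'s guarantee $\lambda_{w_Q}>0$ is what keeps the leading balance non-degenerate and the ratio $\lambda_t/\lambda_{w_Q}$ meaningful.
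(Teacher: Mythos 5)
Your proposal is correct and follows essentially the same route as the paper: the non-degeneracy from (C1) yields $\|\phi_t\|_*\le c_0|Q_t-Q|$, the balance condition with the singular part of $G(\cdot,Q)$ split off yields $|\lambda_te_t-\lambda_{w_Q}e_{w_Q}|\le c_1(1-t)+c_2|Q_t-Q|$, and combining the two gives $|Q_t-Q|\le c_3(1-t)$ and membership in $\Gamma_{w_Q,t}$. The only cosmetic difference is that you phrase the first estimate via the implicit function theorem while the paper works directly with the linearized equation $\mathcal{L}\phi_t=\mathbb{I}_1+\mathbb{I}_2$; the content is the same.
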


\begin{proof}
Since $(w_t,Q_t)$ satisfies Proposition \ref{prop4.1}-(ii), we find that     $Q_t\to Q\in S_2\setminus S_1$ and $w_t\to w_Q$ as $t\to1^-$, where
\begin{align}
\label{4.6}
\left\{\begin{array}{l}
\Delta w_Q+2\rho_2\left(\frac{h_2e^{w_Q+4\pi K_{21} G(x,Q)}}{\int_Mh_2e^{w_Q+4\pi K_{21} G(x,Q)}}-1\right)=0,\\
\nabla\left(\log h_1+\frac12K_{12}w_Q\right)\Big|_{x=Q}=\lambda_{w_Q} e_{w_Q},
\end{array}\right.
\end{align}
and  $\lambda_{w_Q}\in\mathbb{R}\setminus\{0\},\ e_{w_Q}\in \mathbb{R}^2,~|e_{w_Q}|=1$, and $$4\pi(1-t)\nabla\Big(\sum_{p\in S_2\setminus S_1}G(x,p)\Big)\Big|_{x=Q_t}\to\lambda_{w_Q} e_{w_Q}\ \ \textrm{ as} \ \ t\to1^-.$$
Let $w_t=w_Q+\phi_t$. Then $\phi_t$ satisfies $\int_M\phi_t =0$ and
\begin{align*}
\mathcal{L}\phi_t:=~&\Delta \phi_t+2\rho_2\frac{h_2e^{w_Q+4\pi K_{21} G(x,Q)}}{\int_Mh_2e^{w_Q+4\pi K_{21} G(x,Q)}}\phi_t\\
&-2\rho_2\frac{h_2e^{w_Q+4\pi K_{21} G(x,Q)}}{\left(\int_Mh_2e^{w_Q+4\pi K_{21} G(x,Q)}\right)^2}\int_Mh_2e^{w_Q+4\pi K_{21} G(x,Q)}\phi_tdx\\
=~&\mathbb{I}_1+\mathbb{I}_2,
\end{align*}
where
\begin{align*}
\mathbb{I}_1=&-2\rho_2\left(\frac{h_2e^{w_t+4\pi K_{21} G(x,Q)}}{\int_Mh_2e^{w_t+4\pi K_{21} G(x,Q)}}-\frac{h_2e^{w_Q+4\pi K_{21} G(x,Q)}}{\int_Mh_2e^{w_Q+4\pi K_{21} G(x,Q)}}\right)\\
&+2\rho_2\frac{h_2e^{w_Q+4\pi K_{21} G(x,Q)}}{\int_Mh_2e^{w_Q+4\pi K_{21} G(x,Q)}}\phi_t\\
&-2\rho_2\frac{h_2e^{w_Q+4\pi K_{21} G(x,Q)}}{\left(\int_Mh_2e^{w_Q+4\pi K_{21} G(x,Q)}\right)^2}\int_Mh_2e^{w_Q+4\pi K_{21} G(x,Q)}\phi_t.
\end{align*}
and
\begin{align*}
\mathbb{I}_2=-2\rho_2\left(\frac{h_2e^{w_t+4\pi K_{21} G(x,Q_t)}}{\int_Mh_2e^{w_t+4\pi K_{21} G(x,Q_t)}}-\frac{h_2e^{w_t+4\pi K_{21} G(x,Q)}}{\int_Mh_2e^{w_t+4\pi K_{21} G(x,Q)}}\right).
\end{align*}
We can easily see that $\mathbb{I}_1=O(\|\phi_t\|_*^2)$. For $\mathbb{I}_2$, we note that
\begin{align*}
e^{4\pi K_{21} G(x,Q_t)}=e^{4\pi K_{21} G(x,Q)}+O(|Q_t-Q|).
\end{align*}
Thus we get $\mathbb{I}_2=O(|Q_t-Q|)$. By (C1), the solution $w_Q$ of the first equation in \eqref{4.6} is non-degenerate. Using the non-degeneracy of $w_Q$, we can find a constant $c_0>0$, independent of $t\in[0,1)$, such that
\begin{align}
\label{4.7}
\|\phi_t\|_{*}\leq c_0|Q_t-Q|.
\end{align}
Furthermore, from the balance condition in $(1.10)_t$ and \eqref{4.6}, we have
\begin{equation*}
\begin{aligned}
0=~&\nabla(\log h_1+\frac12K_{12}{w_Q})\Big|_{x=Q_t}-\nabla(\log h_1+\frac12K_{12}{w_Q})\Big|_{x=Q}\\&-\frac12(1-t)K_{12}\nabla w_Q(Q_t)\\
&-4\pi(1-t)\sum_{p\in S_2\setminus(S_1\cup\{Q\})}\nabla G(Q_t,p)-4\pi(1-t)\nabla R(Q_t,Q)\\&+2(1-t)\frac{Q_t-Q}{|Q_t-Q|^2}\\
&-\frac12tK_{12}\left(\nabla w_Q(Q_t)-\nabla w_t(Q_t)\right)+\lambda_{w_Q}e_{w_Q},
\end{aligned}
\end{equation*}where $R(x,p)$ denotes the regular part of the Green function $G(x,p)$.
By \eqref{4.7}, we can find constants $c_1, c_2>0$ which are independent of $t$, such that
\begin{equation}
\label{4.8}
\begin{aligned}
\Big|\lambda_{w_Q} -2\frac{(1-t) }{|Q_t-Q| }\Big|&\le\Big|\lambda_{w_Q}  e_{w_Q}+2(1-t)\frac{Q_t-Q}{|Q_t-Q|^2}\Big|\\&\le c_1(1-t)+c_2|Q_t-Q|,
\end{aligned}
\end{equation}
which implies
\begin{equation*}
|Q_t-Q|\le \frac{1}{\lambda_{w_Q}}\Big(2(1-t)+c_1(1-t)|Q_t-Q|+c_2|Q_t-Q|^2\Big).
\end{equation*}
As a consequence, we get a constant $c_3>0$, independent of $t$, satisfying
\begin{equation}
\label{4.9}
|Q_t-Q|\le c_3(1-t).
\end{equation}
Let $e_t=\frac{Q-Q_t}{|Q-Q_t|}$, $\lambda_t=\frac{2(1-t)}{|Q_t-Q|}$ and choose $M_0>\max\{2c_0c_3, 2c_1 +2c_2c_3\}$. Then from \eqref{4.7}-\eqref{4.9}, we get that
\begin{equation}
\label{4.10}
\|\phi_t\|_{*}< M_0(1-t) \ \textrm{and}\ |\lambda_te_t-\lambda_{w_Q} e_{w_Q}|< M_0(1-t).
\end{equation}
Moreover, we also notice that
\begin{equation*}
\lambda_{w_Q}\Big|\frac{\lambda_t }{\lambda_{w_Q}}-1\Big|\le \lambda_{w_Q}\Big|\frac{\lambda_t }{\lambda_{w_Q}}e_t-e_{w_Q}\Big|
=|\lambda_te_t-\lambda_{w_Q} e_{w_Q}|\le M_0(1-t).
\end{equation*}
Thus, we get $\Big|\frac{\lambda_t }{\lambda_{w_Q}}-1\Big|\le \frac{M_0(1-t)}{\lambda_{w_Q} },$ which implies
\begin{equation}
\label{4.11}
\frac{1}{2}<\frac{\lambda_t}{\lambda_{w_Q} }< 2 \ \textrm{ when}\  t \textrm{ is close to}\  1.
\end{equation}
In view of \eqref{4.10}-\eqref{4.11}, we prove that there is $\varepsilon>0$ such that if $|t-1|<\varepsilon$, then
$(w_t,Q_t)\in\Gamma_{w_Q,t}$, where $w_Q$ is a solution of \eqref{4.3}. Now we complete the proof of Lemma \ref{le4.2}.
\end{proof}

Conversely,  for any $Q\in S_2\setminus S_1$ and any non-degenerate solution $w_Q$ of \eqref{4.3},  we shall construct a sequence of solutions $(w_t,Q_t)\in \Gamma_{w_Q,t}$ of $(1.10)_t$ for $t$ close to $1$ such that
$$w_t\to w_Q, \ \textrm{ and}\  4\pi(1-t)\nabla_x\sum_{p\in S_2\setminus S_1}G(x,p)\Big|_{x=Q_t}\to\lambda_{w_Q} e_{w_Q}\ \textrm{ as}\  t\to 1.$$
For $\rho_2\in (4\pi j, 4\pi (j+1))$,  any $Q\in S_2\setminus S_1$ and any non-degenerate solution $w_Q$ of \eqref{4.3},  let    $d_{j}(w_Q,Q)$  be  the degree contributed by a solution $w_Q$  of \eqref{4.3}, and $d_{\Gamma_{w_Q,t},j}$ denote the degree contributed by the solutions of $(1.10)_t$ from the set $\Gamma_{w_Q,t}.$ Then, we have
the following lemma.
\begin{lemma}
\label{le4.3}Assume (C1) and (C2).
For any $Q\in S_2\setminus S_1$, let $w_Q$ be a non-degenerate solution of \eqref{4.3} and $(\lambda_{w_Q},e_{w_Q})$ satisfy \eqref{4.4}. Then there exists a constant $\varepsilon>0$ such that if $|t-1|<\varepsilon$, we can find a sequence of solutions $(w_t,Q_t)\in \Gamma_{w_Q,t}$ of $(1.10)_t$ such that
\begin{equation}
\label{4.12}
Q_t\to Q, \ w_t\to w_Q,\  4\pi(1-t)\sum_{p\in S_2\setminus S_1}\nabla G(x,p)\Big|_{x=Q_t}\to\lambda_{w_Q} e_{w_Q}~\textrm{as}~t\to1^-.
\end{equation}Moreover, we have  $d_{\Gamma_{w_Q,t},j}=-d_{j}(w_Q,Q)$.
\end{lemma}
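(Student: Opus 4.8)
The plan is to prove the lemma in two stages: first constructing the family $(w_t,Q_t)\in\Gamma_{w_Q,t}$ of solutions of $(1.10)_t$ by a fixed-point argument, then computing the local degree by analyzing the linearization of $(1.10)_t$. For the existence, I would reverse the analysis of Lemma \ref{le4.2}. Fixing a candidate point $Q_t$ near $Q$, I solve the first (PDE) equation of $(1.10)_t$ for $w_t=w_Q+\phi_t$; writing this as $\mathcal{L}\phi_t=\mathbb{I}_1+\mathbb{I}_2$ exactly as in Lemma \ref{le4.2}, hypothesis (C1) guarantees that $\mathcal{L}$ is invertible on the mean-zero subspace, and since $\mathbb{I}_1=O(\|\phi_t\|_*^2)$ and $\mathbb{I}_2=O(|Q_t-Q|)$, the contraction mapping principle yields a unique small solution $\phi_t=\phi_t(Q_t)$ with $\|\phi_t\|_*\le c_0|Q_t-Q|$, depending smoothly on $Q_t$.

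Substituting $w_t=w_Q+\phi_t(Q_t)$ into the balance condition reduces $(1.10)_t$ to a two-dimensional equation for $Q_t$. Expanding the Green's function, the singular term $-4\pi(1-t)\nabla_x G(x,Q)\big|_{x=Q_t}$ equals $2(1-t)\frac{Q_t-Q}{|Q_t-Q|^2}+O(1-t)$, which under the substitution $Q_t-Q=-\tfrac{2(1-t)}{\lambda_t}e_t$ is exactly $-\lambda_t e_t$; thus the reduced balance equation reads $\lambda_t e_t=\nabla\big(\log h_1+\tfrac12 K_{12}w_Q\big)(Q)+o(1)=\lambda_{w_Q}e_{w_Q}+o(1)$. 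Since $\lambda_{w_Q}\neq0$ by (C2), the implicit function theorem (applied in the polar variables $(\lambda_t,e_t)$ on $\{|e_t|=1\}\times\mathbb{R}_{>0}$) produces a solution $(\lambda_t,e_t)$, hence $Q_t$, for $|t-1|<\varepsilon$; retracing the estimates \eqref{4.7}--\eqref{4.11} then shows $(w_t,Q_t)\in\Gamma_{w_Q,t}$ and that \eqref{4.12} holds.

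For the degree identity I would compute the Leray--Schauder degree of the map $\Phi_t$ associated with $(1.10)_t$ over $\Gamma_{w_Q,t}$. After inverting the Laplacian the $w$-component becomes identity-plus-compact, while the $Q$-component is finite dimensional, so a Lyapunov--Schmidt reduction applies: the linearization at the constructed solution has the block form $\left(\begin{smallmatrix}\mathcal{L} & *\\ * & A\end{smallmatrix}\right)$, with $\mathcal{L}$ the linearized PDE operator and $A$ the $2\times2$ Jacobian of the reduced balance map in $Q$. Because $\mathcal{L}$ is invertible, a Schur-complement (block-triangularization) argument factors the degree as the product of the $w$-contribution, which is precisely $d_j(w_Q,Q)$, and $\operatorname{sign}\det A$. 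The decisive point is the sign of $\det A$: as $t\to1^-$, $A$ is dominated by the Jacobian of $\xi\mapsto 2(1-t)\frac{\xi-Q}{|\xi-Q|^2}$. Writing $z=\xi-Q\in\mathbb{C}$, this map equals $2(1-t)/\bar z$, which is anti-holomorphic, hence orientation-reversing, with $\det=-\tfrac{4(1-t)^2}{|z|^4}<0$; evaluated at $|Q_t-Q|=2(1-t)/\lambda_t$ it diverges like $-\lambda_t^4/\big(4(1-t)^2\big)\to-\infty$, dominating all the $O(1)$ regular contributions. Therefore $\operatorname{sign}\det A=-1$ and $d_{\Gamma_{w_Q,t},j}=-d_j(w_Q,Q)$.

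The main obstacle I anticipate is controlling the interaction between the singular Green's function term and the regular terms in the balance condition, both in the fixed-point step (ensuring the constructed solution genuinely stays in $\Gamma_{w_Q,t}$ with the prescribed scaling $|Q_t-Q|\sim(1-t)$) and in the degree step (rigorously justifying that the orientation-reversing singular Jacobian dominates, so that the block-triangular reduction is valid and produces exactly the factor $-1$). Care is also needed to verify that $\Phi_t$ has no zeros on $\partial\Gamma_{w_Q,t}$ and that $\Gamma_{w_Q,t}$ isolates a single solution, so that $d_{\Gamma_{w_Q,t},j}$ is well defined as a local degree; here the estimates from Lemma \ref{le4.2} guarantee that every nearby solution of type Proposition \ref{prop4.1}-(ii) indeed lands in $\Gamma_{w_Q,t}$.
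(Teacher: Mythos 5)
Your proposal is correct in substance but follows a genuinely different route from the paper. The paper does not perform a Lyapunov--Schmidt reduction at all: it introduces an explicit homotopy $\Phi_{s,t}$ joining the full coupled map of $(1.10)_t$ (at $s=1$) to a completely decoupled model (at $s=0$) consisting of the PDE \eqref{4.3} with $Q$ frozen and the explicit finite-dimensional equation $\lambda_{w_Q}e_{w_Q}+2(1-t)\frac{z-Q}{|z-Q|^2}=0$. The whole technical content is then the a priori estimate showing $\Phi_{s,t}\neq0$ on $\partial\Gamma_{w_Q,t}$ for all $s\in[0,1]$ (the same estimates you propose to retrace from Lemma \ref{le4.2}), after which homotopy invariance gives $d_{\Gamma_{w_Q,t},j}=\deg(\Phi_{0,t},0,\Gamma_{w_Q,t})=d_j(w_Q,Q)\cdot(-1)$, the $-1$ coming from the trace-free, negative-determinant Jacobian of $z\mapsto 2(1-t)\frac{z-Q}{|z-Q|^2}$ --- exactly the anti-holomorphy observation you make. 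Existence of solutions in $\Gamma_{w_Q,t}$ is then deduced \emph{a posteriori} from the nonvanishing of the degree, not constructed. Your approach instead solves the PDE first by contraction (using (C1)), reduces to a two-dimensional balance equation, and gets existence constructively with the degree computed as a local index via the Schur complement. What each buys: the paper's homotopy never needs to know how many zeros lie in $\Gamma_{w_Q,t}$ or whether they are non-degenerate, since the degree over the whole set is transported at once; your reduction gives more (uniqueness and an explicit solution), but to conclude $d_{\Gamma_{w_Q,t},j}=-d_j(w_Q,Q)$ from a single local index you must actually prove that the constructed solution is the \emph{only} zero in $\Gamma_{w_Q,t}$ --- a point you flag but do not execute. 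This is closable within your framework (your reduced equation $\xi=-2(1-t)\frac{\lambda_{w_Q}e_{w_Q}+F(Q+\xi,t)}{|\lambda_{w_Q}e_{w_Q}+F(Q+\xi,t)|^2}$ is a contraction in $\xi$ for $t$ near $1$, and the fiberwise PDE solution is unique in the ball $\|\phi_t\|_*\le M_0(1-t)$), so it is a gap in execution rather than in the idea; alternatively you could compute the degree of the reduced map over the projected region by homotopy to the model map, which is in effect what the paper does globally.
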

\begin{proof}
Let
\begin{align*}
T_0(w)=\Delta^{-1} \left[2\rho_2\left(\frac{h_2e^{w+4\pi K_{21} G(x,Q)}}{\int_Mh_2e^{w+4\pi K_{21} G(x,Q)}}-1\right)\right],
\end{align*}
and
\begin{align*}
T_1(w,z)=\Delta^{-1}\left[2\rho_2\left(\frac{h_2e^{w+4\pi K_{21} G(x,z)}}{\int_Mh_2e^{w+4\pi K_{21} G(x,z)}}-1\right)\right].
\end{align*}
We consider the following deformation $\Phi_{s,t}=(\Phi_{s,t}^1,\Phi_{s,t}^2)$ for $0\le s\le 1$, where
\begin{align*}
\Phi_{s,t}^1(w,z)=(1-s)\left(w+T_0(w)\right)+s\left(w+T_1(w,z)\right),
\end{align*}
and
\begin{align*}
\Phi_{s,t}^2(w,z)=~&(1-s)\Big(\lambda_{w_Q}e_{w_Q}+2(1-t)\frac{x-Q}{|x-Q|^2}\Big)\Big|_{x=z}\\
&+s\nabla\Big(\log h_1e^{\frac{t}{2}K_{12}w}-4\pi(1-t)\sum_{p\in S_2\setminus S_1}G(x,p)\Big) \Big|_{x=z}.
\end{align*}
We claim that there exists $\varepsilon>0$ such that for any $t$ satisfying $|t-1|<\varepsilon$, we have $\Phi_{s,t}\neq 0$ on $\partial\Gamma_{w_Q,t}$ for all $0\le s \le 1$. Suppose this claim is not true and there is $s\in[0,1]$ such that $\Phi_{s,t}(w_t,Q_t)=0$ for some $(w_t,Q_t)=(w_Q+\phi_t,Q-\frac{2(1-t)}{\lambda_t}e_t)\in\partial\Gamma_{w_Q,t}$.

Since $\Phi_{s,t}^1(w_t,Q_t)=0$, using the relation $w_t=w_Q+\phi_t$ and \eqref{4.3}, we have
\begin{align*}
0=~&(1-s)\left[\Delta w_t+ 2\rho_2\left(\frac{h_2e^{w_t+4\pi K_{21} G(x,Q)}}{\int_Mh_2e^{w_t+4\pi K_{21} G(x,Q)}}-1\right)\right]\\
&+s\left[\Delta w_t+2\rho_2\left(\frac{h_2e^{w_t+4\pi K_{21} G(x,Q_t)}}{\int_Mh_2e^{w_t+4\pi K_{21} G(x,Q_t)}}-1\right)\right]\\
=~&\Delta \phi_t+2\rho_2(1-s)\left(\frac{h_2e^{w_t+4\pi K_{21} G(x,Q)}}{\int_Mh_2e^{w_t+4\pi K_{21} G(x,Q)}}
-\frac{h_2e^{w_Q+4\pi K_{21} G(x,Q)}}{\int_Mh_2e^{w_Q+4\pi K_{21} G(x,Q)}}\right)\\
&+2\rho_2s\left(\frac{h_2e^{w_t+4\pi K_{21} G(x,Q_t)}}{\int_Mh_2e^{w_t+4\pi K_{21} G(x,Q_t)}}
-\frac{h_2e^{w_Q+4\pi K_{21} G(x,Q)}}{\int_Mh_2e^{w_Q+4\pi K_{21} G(x,Q)}}\right).
\end{align*}
As in the proof of Lemma \ref{le4.2}, by using the non-degeneracy of $w_Q$ to (\ref{4.3}) and $(w_t,Q_t)\in\partial\Gamma_{w_Q,t}$, we have a constant $c_0>0$, independent of $t$, satisfying
\begin{align}
\label{4.14}
\|\phi_t\|_{*}\leq c_0|Q_t-Q|\le\frac{2c_0(1-t)}{\lambda_t}\le \frac{4c_0(1-t)}{\lambda_{w_Q}}.
\end{align}
Furthermore, using $\Phi_{s,t}^2(w_t,Q_t)=0$ and (\ref{4.4}), we have
\begin{equation}\begin{aligned}\label{aboveeq}
0=&~(1-s)\left(\lambda_{w_Q}e_{w_Q}+2(1-t)\frac{Q_t-Q}{|Q_t-Q|^2}\right)\\
&+s\nabla\Big(\log h_1+\frac{t}{2}K_{12}w_t-4\pi(1-t)\sum_{p\in S_2\setminus S_1}G(x,p)\Big)(Q_t)\\
=&~\lambda_{w_Q}e_{w_Q}+2(1-t)\frac{Q_t-Q}{|Q_t-Q|^2}-\frac12stK_{12}(\nabla w_Q(Q_t)-\nabla w_t(Q_t))\\
&+s\Big(\nabla(\log h_1+\frac{1}{2}K_{12}w_Q)(Q_t)-\nabla(\log h_1+\frac12K_{12}w_Q)(Q)\Big)\\
&-\frac{1}{2}s(1-t)K_{12}\nabla w_Q(Q_t)\\&-4\pi s(1-t)\Big(\sum_{p\in S_2\setminus(S_1\cup\{Q\})}\nabla G(Q_t,p)+\nabla R(Q_t,Q)\Big).
\end{aligned} \end{equation}
Using \eqref{4.14},  $(w_t,\phi_t)\in\partial\Gamma_{w_Q,t}$ and \eqref{aboveeq}, we have
\begin{equation*}
\begin{aligned}
 \Big|\lambda_{w_Q}e_{w_Q}+2(1-t)\frac{Q_t-Q}{|Q_t-Q|^2}\Big| &=|\lambda_{w_Q}e_{w_Q}-\lambda_te_t|\le c_1(1-t)+c_2|Q_t-Q|\\&=c_1(1-t)+c_2\frac{2(1-t)}{\lambda_{w_Q}}\frac{\lambda_{w_Q}}{\lambda_t}\\&\le c_1(1-t)+\frac{4c_2}{\lambda_{w_Q}}(1-t),
\end{aligned}
\end{equation*}
for some constants $c_1, c_2>0$ which are independent of $t$. By choosing $M_0>\max\Big\{\frac{8c_0}{\lambda_{w_Q}}, 2c_1 +\frac{8c_2}{\lambda_{w_Q}}\Big\}$, we get that $\|\phi_t\|_{*}< M_0(1-t)$ and
$|\lambda_te_t-\lambda_{w_Q}e_{w_Q}|< M_0(1-t)$. As a consequence, we have
\begin{align*}
\lambda_{w_Q}\Big|\frac{\lambda_t}{\lambda_{w_Q}}-1\Big|\le \lambda_{w_Q}\Big|\frac{\lambda_t}{\lambda_{w_Q}}e_t-e_{w_Q}\Big|
=|\lambda_{w_Q}e_{w_Q}-\lambda_te_t|< M_0(1-t).
\end{align*}
Then we get $
\Big|\frac{\lambda_t}{\lambda_{w_Q}}-1\Big|< \frac{M_0(1-t)}{\lambda_{w_Q}},
$
which implies $\frac{1}{2}<\frac{\lambda_t}{\lambda_{w_Q}}< 2$ when $t$ is close to $1$. Therefore, we prove the claim that there is $\varepsilon>0$ such that if $|1-t|<\varepsilon$, then $\Phi_{s,t}\neq 0$ on $\partial\Gamma_{w_Q,t}$ for all $0\le s \le 1$.
\medskip

So we get that
\begin{equation}\label{4.16}d_{\Gamma_{w_Q,t},j}=\textrm{deg}(\Phi_{1,t},0,\Gamma_{w_Q,t})=\textrm{deg}(\Phi_{0,t},0,\Gamma_{w_Q,t})\ \ \textrm{if}\ \ |1-t|<\varepsilon.\end{equation}
When $\Phi_{0,t}(w_t,Q_t)=0$, i.e.,
\begin{align}
\label{4.17}
\left\{\begin{array}{l}
0=\Delta\Phi_{0,t}^1(w_t,Q_t)=\Delta w_t+2\rho_2\left(\frac{h_2e^{w_t+4\pi K_{21} G(x,Q)}}{\int_Mh_2e^{w_t+4\pi K_{21} G(x,Q)}}-1\right),\\
0=\Phi_{0,t}^2(w_t,Q_t)=\lambda_{w_Q}e_{w_Q}+2(1-t)\frac{Q_t-Q}{|Q_t-Q|^2}.
\end{array}\right.
\end{align}
Then we get that  $(w_t,Q_t)=\Big(w_Q,Q-\frac{2(1-t)}{\lambda_{w_Q}}e_{w_Q}\Big)\in \Gamma_{w_Q,t}$ by using the non-degeneracy of $w_Q$.

Next, we shall compute the term $\textrm{deg}(\Phi_{0,t},0,\Gamma_{w_Q,t})$. Let $Q_t=(Q_t^1,Q_t^2)$ and $Q=(Q^1,Q^2)$. Then we can see that
\begin{align*}
&\nabla_{Q_t} \Phi_{0,t}^2(w_t,Q_t)\\&=\frac{2(1-t)}{|Q_t-Q|^4}
\left[\begin{array}{ll}(Q_t^2-Q^2)^2-(Q_t^1-Q^1)^2\    &\   -2(Q_t^1-Q^1)(Q_t^2-Q^2)  \\
-2(Q_t^1-Q^1)(Q_t^2-Q^2)\   &\   (Q_t^1-Q^1)^2-(Q_t^2-Q^2)^2\\
\end{array}\right].
\end{align*}
We note that
$$\textrm{tr} \Big[\nabla_{Q_t} \Phi_{0,t}^2(w_t,Q_t)\Big]=0  \ \textrm{ and}\ \ \textrm{det} \Big[\nabla_{Q_t} \Phi_{0,t}^2(w_t,Q_t)\Big]<0.$$
Then the number of negative eigenvalues of $ \nabla_{Q_t} \Phi_{0,t}^2(w_t,Q_t)$ is one. So we deduce that the degree of the second equation in \eqref{4.17} is $-1$. Since \eqref{4.17} is decoupled system, the topological degree is the product of the topological degree of the two equations in \eqref{4.17}. Therefore, for any point $Q\in S_2\setminus S_1,$ we get from \eqref{4.16} that  if $|1-t|<\varepsilon$, then
\begin{equation}\label{4.18}d_{\Gamma_{w_Q,t},j}=\textrm{deg}(\Phi_{1,t},0,\Gamma_{w_Q,t})=\textrm{deg}(\Phi_{0,t},0,\Gamma_{w_Q,t})=-d_{j}(w_Q,Q),\end{equation}
 where   $d_{j}(w_Q,Q)$  is the degree contributed by a solution $w_Q$  of \eqref{4.3}.
 Since  all the solutions of (\ref{4.3}) are non-degenerate (see (C1)),  we can get $d_{\Gamma_{w_Q,t},j}=-d_{j}(w_Q,Q)\neq0$. Thus, for any point $Q\in S_2\setminus S_1$ and any non-degenerate solution $w_Q$ of (\ref{4.3}), we can construct a family of solutions $(w_t,Q_t)$ in $\Gamma_{w_Q,t}$ which verifies (\ref{4.12}) as $t$ is sufficiently close to $1$.\end{proof}
 For any $Q\in S_2\setminus S_1$ and $\rho_2\in (4\pi j, 4\pi (j+1))$, let $d_{j}(Q)$ denote the topological degree  of the equation (\ref{4.3}). We recall $d_{\Gamma_{w_Q,t},j}$ denotes the degree contributed by the solutions of $(1.10)_t$ in  the set $\Gamma_{w_Q,t}.$
Then we have the following result. \begin{lemma}\label{le4.3.1} There is a constant $\varepsilon>0$ such that
 \begin{equation}\label{4.13}
d_{j}(Q)=-\sum_{w_Q}d_{\Gamma_{w_Q,t},j}~\textrm{for}~|t-1|<\varepsilon.
\end{equation}\end{lemma}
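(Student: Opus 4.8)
The plan is to derive \eqref{4.13} by combining the per-solution identity of Lemma \ref{le4.3} with the additivity of the Leray--Schauder degree over the non-degenerate solutions of \eqref{4.3}.

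First I would observe that, for the fixed point $Q\in S_2\setminus S_1$, equation \eqref{4.3} is a single mean field equation whose weight $h_2e^{4\pi K_{21}G(x,Q)}$ carries only integer-order singular sources: those at the points of $S_2$, and the effective source of order $\alpha_{Q,2}-K_{21}\in\mathbb{N}$ at $Q$. Consequently every element of the critical set $\Sigma$ for \eqref{4.3} is a multiple of $8\pi$, so the hypothesis $\rho_2\notin4\pi\mathbb{N}$ yields $2\rho_2\notin\Sigma$, and Theorem A supplies a uniform a priori bound for all solutions of \eqref{4.3}; hence the Leray--Schauder degree $d_j(Q)$ is well defined. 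Under hypothesis (C1) every solution $w_Q$ of \eqref{4.3} is non-degenerate, and the a priori bound forces the solution set to be simultaneously compact and discrete, hence finite. By excision and additivity of the degree I would then write
\begin{equation*}
d_j(Q)=\sum_{w_Q}d_j(w_Q,Q),
\end{equation*}
the sum being taken over the finitely many solutions $w_Q$ of \eqref{4.3}, where $d_j(w_Q,Q)$ is the local degree contributed by $w_Q$.

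Next I would apply Lemma \ref{le4.3}: for each non-degenerate $w_Q$ there is $\varepsilon_{w_Q}>0$ with $d_{\Gamma_{w_Q,t},j}=-d_j(w_Q,Q)$ whenever $|t-1|<\varepsilon_{w_Q}$. As the collection $\{w_Q\}$ is finite, setting $\varepsilon=\min_{w_Q}\varepsilon_{w_Q}>0$ makes this equality hold for every $w_Q$ at once once $|t-1|<\varepsilon$. Summing over $w_Q$ and inserting the decomposition of $d_j(Q)$ gives
\begin{equation*}
\sum_{w_Q}d_{\Gamma_{w_Q,t},j}=-\sum_{w_Q}d_j(w_Q,Q)=-d_j(Q),
\end{equation*}
which is exactly \eqref{4.13}.

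The step demanding the most care is the bookkeeping behind the decomposition $d_j(Q)=\sum_{w_Q}d_j(w_Q,Q)$ and its matching with the right-hand side: one must check that the neighborhoods $\Gamma_{w_Q,t}$ attached to distinct solutions $w_Q$ are mutually disjoint for $t$ near $1$, so that no contribution is double-counted, and that by Lemma \ref{le4.2} every family of solutions of $(1.10)_t$ in case (ii) of Proposition \ref{prop4.1} is captured by exactly one $\Gamma_{w_Q,t}$. Once this correspondence is confirmed, \eqref{4.13} follows formally from Lemma \ref{le4.3}.
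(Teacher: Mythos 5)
Your proposal is correct and follows essentially the same route as the paper: the paper's proof simply sums the per-solution identity $d_{\Gamma_{w_Q,t},j}=-d_{j}(w_Q,Q)$ of Lemma \ref{le4.3} over the finitely many non-degenerate solutions $w_Q$ of \eqref{4.3}, implicitly using the decomposition $d_j(Q)=\sum_{w_Q}d_j(w_Q,Q)$ that you spell out via Theorem A, (C1), and additivity of the degree. Your additional checks (finiteness of the solution set, the uniform choice of $\varepsilon$, disjointness of the sets $\Gamma_{w_Q,t}$) are exactly the details the paper leaves tacit.
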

 \begin{proof}For fixed $Q\in S_2\setminus S_1$, we consider the identity \eqref{4.18}. Let us  take the summation of $d_{\Gamma_{w_Q,t},j}$ and $d_{j}(w_Q,Q)$ with respect to all the solutions  $w_Q$ of \eqref{4.3}. Then we get
$$d_{j}(Q)=-\sum_{w_Q}d_{\Gamma_{w_Q,t},j},$$
and prove  Lemma \ref{le4.3.1}.
\end{proof}

\medskip
 \noindent
\textbf{Proof of Theorem \ref{th1.2}.} For $\rho_2\in(4\pi j, 4\pi (j+1))$,
let $d_{s,j}(t)$ be the topological degree of $(1.10)_t$ for $t\in[0,1)$.
At $t=0$, the system $(1.10)_0$ becomes the following decoupled system:
\begin{align}
\label{4.19}
\left\{\begin{array}{l}
\Delta w_0+2\rho_2\left(\frac{h_2e^{w_0+4\pi K_{21} G(x,Q_0)}}{\int_Mh_2e^{w_0+4\pi K_{21} G(x,Q_0)}}-1\right)=0,\\
\nabla\left(\log h_1^*-\sum_{p\in S_1}4\pi\alpha_{p,1}G(x,p)-4\pi\sum_{p\in S_2\setminus S_1}G(x,p)\right)\Big|_{x=Q_0}=0.
\end{array}\right.
\end{align}
From the balance condition in (\ref{4.19}), we can easily see that $Q_0\notin S_1\cup S_2$. Since (\ref{4.19}) is a decoupled system, the topological degree of the system equals the product of the degree of the two equations in \eqref{4.19}. By Poincare-Hopf Theorem, the degree of the second equation in \eqref{4.19} is \begin{equation}
\label{4.20}
\chi(M)-|S_1\cup S_2|.
\end{equation}
By Theorem B and $Q_0\notin S_1\cup S_2$, the degree of the first equation in \eqref{4.19} has the following generating function,
\begin{equation}
\label{4.21}
(1+x+\cdots)^{1-\chi(M)}(1+\cdots x^{-K_{21}})\prod_{p\in S_2}(1+\cdots+x^{\alpha_{p,2}}).
\end{equation}
By (\ref{4.20}) and (\ref{4.21}), we can get that $d_{s,j}(0)$  has the generating function  \begin{equation}
\label{4.22}
[\chi(M)-|S_1\cup S_2|](1+x+\cdots)^{1-\chi(M)}(1+\cdots+x^{-K_{21}}) \prod_{p\in S_2}(1+\cdots+x^{\alpha_{p,2}}).
\end{equation}
Moreover, in view of Proposition  \ref{prop4.01}, we have
\begin{equation}\begin{aligned}\label{4.23}
d_{s,j}(0)&=\lim_{t\to1^-}d_{s,j}(t).\end{aligned}\end{equation}
For $\rho_2\in(4\pi j, 4\pi (j+1))$,
   let $d_{\Gamma_{w_Q,t},j}$ denote  the degree contributed by the solutions of $(1.10)_t$ from the set $\Gamma_{w_Q,t}$.
We recall that $d^S_j$ denotes the topological  degree of the  shadow system \eqref{1.1} for $\rho_2\in(4\pi j, 4\pi (j+1))$.
From (C2), \eqref{4.23},   and Lemma \ref{le4.2}-\ref{le4.3.1}, we get that \begin{equation}\begin{aligned}\label{4.26}
d^S_j=\lim_{t\to1^-}(d_{s,j}(t)-\sum_{Q\in S_2\setminus S_1}\sum_{w_Q}d_{\Gamma_{w_Q,t},j})=d_{s,j}(0)+\sum_{Q\in S_2\setminus S_1 }d_{j}(Q).\end{aligned}\end{equation}
   In view of Theorem B, the topological degree $d_{j}(Q)$ for $Q\in S_2\setminus S_1$  of the equation \eqref{4.3} has   the following generating function
   \begin{equation} (1-x)^{\chi(M)-1}(1+x+\cdots+ x^{-K_{21}+\alpha_{Q,2}}) \prod_{p\in S_2\setminus\{Q\}}(1+x+\cdots+x^{\alpha_{p,2}}).\label{4.27}\end{equation}
By using \eqref{4.22}, \eqref{4.26} and \eqref{4.27}, we conclude that   the  generating function for $d^S_j$ has the following representation:
\begin{equation*} \begin{aligned}(1-x)^{\chi(M)-1}&\Big\{(\chi(M)-|S_1\cup S_2|)(1+x+\cdots+x^{-K_{21}}) \prod_{p\in S_2}(1+x+\cdots+x^{\alpha_{p,2}})\\ &+\sum_{Q\in S_2\setminus S_1}(1+x+\cdots+ x^{-K_{21}+\alpha_{Q,2}}) \prod_{p\in S_2\setminus\{Q\}}(1+x+\cdots+x^{\alpha_{p,2}})\Big\}. \end{aligned}\end{equation*}
Hence  we finish the proof of Theorem \ref{th1.2}. \hfill $\Box$

\vspace{1cm}

\section{Applications of the degree formula of shadow system}
In the previous section, we have computed the topological degree $d^S_j$ of the shadow system (\ref{1.1}) when $\rho_2\in(4j\pi,4(j+1)\pi)$. We will use it to compute the gap $d_{1,j}^{\textbf{K}}-d_{0,j}^{\textbf{K}}$, where
$d_{i,j}^{\mathbf{K}}$  denotes  the topological degree  for $(1.14)$ when $\rho_1\in(4i\pi,4(i+1)\pi)$ and $\rho_2\in(4j\pi,4(j+1)\pi)$.

\medskip
 \noindent \textbf{Proof of   Theorem \ref{th1.4}.}
From  $d_{0,j}^{\textbf{K}}=d_{j}$ and Theorem B,  $d_{0,j}^{\textbf{K}}$ has the following generating function:
\begin{equation} \begin{aligned}(1-x)^{\chi(M)-1}   \prod_{p\in S_2}(1+x+\cdots+x^{\alpha_{p,2}}). \label{5.2}\end{aligned}\end{equation}
From Theorem F, Theorem \ref{th1.2}, and \eqref{5.2},   we see that  $d_{1,j}^{\textbf{K}}$  for  $\rho_2\in(4\pi j,4\pi({j+1}))$ has the following generating function:
\begin{equation*} \begin{aligned}(1-x)^{\chi(M)-1}&\Big\{\prod_{p\in S_2}(1+x+\cdots+x^{\alpha_{p,2}})\\&-(\chi(M)-|S_1\cup S_2|)(1+x+\cdots+x^{-K_{21}}) \prod_{p\in S_2}(1+x+\cdots+x^{\alpha_{p,2}})\\ &-\sum_{q\in S_2\setminus S_1}(1+x+\cdots+ x^{-K_{21}+\alpha_{q,2}}) \prod_{p\in S_2\setminus\{q\}}(1+x+\cdots+x^{\alpha_{p,2}})\Big\}. \end{aligned}\end{equation*}
 We can get the similar result for  $\rho_2\in(0,4\pi)\cup(4\pi,8\pi)$ and $\rho_1\in(4\pi j,4\pi(j+1))$.  Thus   we get Theorem  \ref{th1.4}.   \hfill$\square$

 \medskip

 Now we want to apply Theorem \ref{th1.4} to the equation $(1.18)$ on $M=\mathbb{S}^2$ with $n=2$ and $\mathbf{K}=\mathbf{A}_2$, that is,  \begin{equation*}
\left\{\begin{array}{l}
\Delta u^*_1+2 e^{u_1^*} -   e^{u_2^*}
=4\pi+ 4\pi\sum_{p\in S}\alpha_{p,1} \delta_p ,\\
\Delta u^*_2+2 e^{u_2^*} -   e^{u_1^*}
=4\pi+ 4\pi\sum_{p\in S}\alpha_{p,2}\delta_p,
\end{array}\right.
\end{equation*} We  recall that
 $N_1=\sum_{p\in S_1}\alpha_{p,1}$ and $N_2=\sum_{p\in S_2}\alpha_{p,2},$
where $S_1=S_2=S.$
As we discussed in the introduction,  $(1.18)$ can be written as the  form $(1.13)$ with $
(\rho_1,\rho_2)=4\pi\Big(1+\frac{2 N_1}{3}+\frac{N_2}{3}, 1+\frac{2 N_2}{3}+\frac{N_1}{3}\Big)$, that is,
\begin{equation}\label{5.3}
\left\{\begin{array}{l}
\Delta u^*_1+2\rho_1\left(\frac{ e^{u_1^*}}{\int_M e^{u_1^*}}-1\right)-\rho_2\left(\frac{ e^{u_2^*}}{\int_M e^{u_2^*}}-1\right)=4\pi\sum_{p\in S}\alpha_{p,1}(\delta_p-1),\\
\Delta u^*_2+2\rho_2\left(\frac{ e^{u_2^*}}{\int_M e^{u_2^*}}-1\right)-\rho_1\left(\frac{ e^{u_1^*}}{\int_M e^{u_1^*}}-1\right)=4\pi\sum_{p\in S}\alpha_{p,2}(\delta_p-1).
\end{array}\right.
\end{equation}
Now we are going to prove the Corollary \ref{cr1.3}.

 \medskip
 \noindent \textbf{Proof of   Corollary \ref{cr1.3}.} We note that   $\chi(\mathbb{S}^2)=2$.
Then Theorem \ref{th1.4} implies that \eqref{5.3} has the following generating function of the topological degree $d_{1,j}^{\mathbf{A}_2}$ for $(\rho_1,\rho_2)\in (4\pi,8\pi)\times (4\pi j ,4\pi (j+1))$:
\begin{equation*} \begin{aligned}\sum_{j=0}^\infty d_{1,j}^{\mathbf{A}_2} x^j=(1-x)&\Big\{\prod_{p\in S_2}(1+x+\cdots+x^{\alpha_{p,2}})\\&-(2-|S_1\cup S_2|)(1+x) \prod_{p\in S_2}(1 +\cdots+x^{\alpha_{p,2}})\\ &-\sum_{q\in S_2\setminus S_1}(1 +\cdots+ x^{1+\alpha_{q,2}}) \prod_{p\in S_2\setminus\{q\}}(1 +\cdots+x^{\alpha_{p,2}})\Big\}. \end{aligned}\end{equation*}
We consider the following several cases:

(i) if $(N_1,N_2)=(0,1)$ and $\alpha_{p,2}=1$, then $(\rho_1,\rho_2)=4\pi\Big( \frac{4}{3},  \frac{5}{3} \Big)$. We can get
 $d_{1,1}^{\mathbf{A}_2}=-1$.

(ii) if $(N_1,N_2)=(0,2)$ and $\alpha_{p,2}=1$ for any $p\in S_2$, then  $(\rho_1,\rho_2)=4\pi\Big( \frac{5}{3},  \frac{7}{3} \Big)$.
We can get  $d_{1,2}^{\mathbf{A}_2}=-1$.

(iii) if $(N_1,N_2)=(0,2)$ and $\alpha_{p,2}=2$, then $(\rho_1,\rho_2)=4\pi\Big( \frac{5}{3},  \frac{7}{3} \Big)$. We can get
 $d_{1,2}^{\mathbf{A}_2}=0$.


When (i) or  (ii)  holds,  we note that the degree does not vanish. As a result, we get  the existence of solutions of \eqref{5.3} and complete the proof of the Corollary \ref{cr1.3}.\hfill$\Box$

\vspace{1cm}


\begin{thebibliography}{99}

\bibitem{m5} L. Battaglia, A. Jevnikar, A. Malchiodi and D. Ruiz, A general existence result for the Toda system on compact surfaces. preprint.

\bibitem{bt1} D. Bartolucci, G. Tarantello, Liouville type equations with singular data and their applications to periodic multivortices for
 the electroweak theory. {\em Comm. Math. Phys.} 229(2002), no.1, 3-47.

\bibitem{bclt} D. Bartolucci, C. C. Chen, C. S. Lin, G. Tarantello, Profile of blow-up solutions to mean field equations with singular data.
{\em Comm. Partial Differ. Equ.} 29(7-8), 1241-1265 (2004).

\bibitem{bw} J. Bolton, L. M. Woodward, Some geometrical aspects of the 2-dimensional Toda equations. In: Geometry, Topology and Physics,
Campinas, 1996, 69-81. de Gruyter, Berlin (1997).

\bibitem{bjrw} J. Bolton, G. R. Jensen, M. Rigoli, L. M. Woodward, On conformai minimal immersions of $S^2$ into $\mathbb{CP}^n$.
{\em Math. Ann.}  279(4), 599-620 (1988).

\bibitem{bm} H. Brezis and F. Merle, Uniform estimates and blow-up behavior for solutions of $-\Delta u=V(x)e^u$ in two dimensions.
{\em Comm. Partial Differential Equation} 16(1991), 1223-1254.


\bibitem{clw} C. L. Chai, C. S. Lin, and C. L. Wang, Mean field equations, hyperelliptic curves, and modular forms: I.
{\em Cambridge Journal of Mathematics} 3, no. 1�V2 (2015) 127�V274.

\bibitem{cgy} S. Y. Chang, M. Gursky and P. C. Yang, The scalar curvature equation on 2- and 3-spheres. {\em Calc. Var. Partial Differential Equations}
1 (1993), no. 2, 205-229.

\bibitem{cli} W.X. Chen and C.  Li,  Qualitative properties of solutions to some nonlinear elliptic equations in $\mathbb{R}^2$. {\em Duke Math. J.}  71  (1993),   427-439.


\bibitem{cl1} C. C. Chen and C. S. Lin, Sharp estimates for solutions of multi-bubbles in compact Riemann surface.
{\em Comm. Pure Appl. Math.} 55 (2002), 728-771.

\bibitem{cl2} C. C. Chen and C. S. Lin, Topological degree for a mean field equation on Riemann surfaces.
{\em Comm. Pure Appl. Math.} 56 (2003), 1667-1727.

\bibitem{cl3} C. C. Chen and C. S. Lin, Mean field equations of Liouville type with singular data: shaper estimates.
{\em Discrete Contin. Dyn. Syst.} 28 (2010), 3, 1237-1272.

\bibitem{cl4} C. C. Chen and C. S. Lin, Mean field equation of Liouville type with singular data: topological degree.
{\em  Comm. Pure Appl. Math.}  68  (2015),    6, 887-947.

\bibitem{ckl} Z. J. Chen, T. J. Kuo, and C. S. Lin. Mean field equation, isomonodromic deformation and
Painleve VI equation: Part I. preprint.

\bibitem{cw} S. S. Chern, J. G. Wolfson, Harmonic maps of the two-sphere into a complex Grassmann manifold. {\em  Ann. Math.} 125(2), (1987), 301-335.





\bibitem{d} G. Dunne, Self-dual Chern-Simons Theories. Lecture Notes in Physics. Springer, Berlin (1995).


\bibitem{gh} P. Griffiths, J. Harris, Principles of algebraic geometry. John Wiley. (2014).

\bibitem{g} M. A. Guest, Harmonic Maps, Loop Groups, and Integrable Systems. London Mathematical Society Student Texts, vol. 38.
Cambridge University Press, Cambridge (1997)

\bibitem{jkm} A. Jevnikar, S. Kallel, A. Malchiodi, A topological join construction and the Toda system on compact surfaces of arbitrary genus, preprint.


\bibitem{jw} J. Jost, G. F. Wang, Analytic aspects of the Toda system. I. A Moser-Trudinger inequality.
{\em Comm. Pure Appl. Math.} 54(2001), no. 11, 1289-1319.

\bibitem{jlw} J. Jost, C. S. Lin, G. F. Wang, Analytic aspects of the Toda system. II. Bubbling behavior and existence of solutions.
{\em Comm. Pure Appl. Math.} 59(2006), no. 4, 526-558.

\bibitem{k}  T. Kato, Perturbation theory for linear operators. Reprint of the 1980 edition. Classics in Mathematics. Springer-Verlag, Berlin, 1995.


\bibitem{llwy} Y.  Lee, C. S. Lin, J. C. Wei, W. Yang, Degree counting and Shadow system for Toda system of rank two: one bubbling. Preprint


\bibitem{ll} J. Li, Y. Li, Solutions for Toda systems on Riemann surfaces. {\em Ann. Sc. Norm. Super. Pisa, Cl. Sci.} 5(4), 703-728 (2005).

\bibitem{l} Y. Y. Li, Harnack inequality: the method of moving planes. {\em Comm. Math. Phys.} 200(1999), 421-444.

\bibitem{ls} Y. Y. Li, I. Shafrir, Blow-up analysis for solutions of $-\Delta u=V(x)e^u$ in dimension two.
{\em Indiana Univ. Math. J.} 43(1994), 1255-1270.

\bibitem{l1} C. S. Lin, An expository survey on the recent development of mean field equations.
{\em Discrete Contin. Dyn. Syst.} 19 (2007), no.2, 387-410.


\bibitem{lt} C. S. Lin and G. Tarantello, When Blowup does not imply concentration: a detour from Brezis-Merle's result.  Preprint.

\bibitem{lw0} C. S. Lin, C. L. Wang, Elliptic functions, Green functions and the mean field equations on tori.
{\em Ann. of Math.} (2) 172 (2010), no. 2, 911-954.

\bibitem{lwz0} C. S. Lin, J. C. Wei and L. Zhang,  Classification of blowup limits for $SU(3)$ singular Toda systems.  {\em Anal. PDE} 8 (2015), no. 4, 807-837.


\bibitem{lwz1} C. S. Lin, J. C. Wei and C. Y. Zhao, Asymptotic behavior of $SU(3)$ Toda system in a bounded domain.
{\em Manuscripta Math.} 137(2012), no.1-2, 1-18.

\bibitem{lwz2} C. S. Lin, J. C. Wei and C. Y. Zhao, Sharp estimates for fully bubbling solutions of a $SU(3)$ Toda system.
{\em Geom. Funct. Anal.} 22(2012), no.6, 1591-1635.

\bibitem{lwy} C. S. Lin, J. C. Wei and D. Ye, Classifcation and nondegeneracy of $SU(n+1)$ Toda system.
{\em Invent. Math.} 190(2012), no.1, 169-207.

\bibitem{ly1} C. S. Lin, S. S. Yan, Existence of bubbling solutions for Chern-Simons model on a torus.
{\em Arch. Ration. Mech. Anal.} 207 (2013), no. 2, 353-392.

\bibitem{ly2} C. S. Lin, S. S. Yan, Bubbling solutions for the $SU(3)$ Chern-Simons model on a torus.
{\em Comm. Pure Appl. Math.} 66 (2013), no. 7, 991-1027.


\bibitem{lz1} C. S. Lin,  L. Zhang, Profile of bubbling solutions to a Liouville system.
{\em Ann. Inst. H. Poincar$\acute{e}$ Anal. Non Lin$\acute{e}$aire} 27(2010), no. 1, 117-143.

\bibitem{lz2} C. S. Lin, L. Zhang, A topological degree counting for some Liouville systems of mean field type.
{\em Comm. Pure Appl. Math.} 64(2011), no. 4, 556-590.

\bibitem{lz3} C. S. Lin, L. Zhang, On Liouville systems at critical parameters, Part 1: One bubble.
{\em J. Funct. Anal.} 264(2013), no. 11, 2584-2636.

\bibitem{lz4} C. S. Lin, L. Zhang, Energy concentration and a priori estimates for $B_2$ and $G_2$ types of Toda systems. to appear in International Math. Research Note.



\bibitem{lwz} C.S. Lin, J. Wei,  L. Zhang, On rank $2$ Toda system with arbitrary singularities: local mass and a priori bound, in preparation.


\bibitem{m3} A. Malchiodi, C. B. Ndiaye, Some existence results for the Toda system on closed surfaces.
{\em Atti Accad. Naz. Lincei Cl. Sci. Fis. Mat. Natur. Rend. Lincei (9) Mat. Appl.} 18(2007), no.4, 391-412.

\bibitem{m4} A. Malchiodi, D. Ruiz, A variational analysis of the Toda system on compact surfaces.
{\em Comm. Pure Appl. Math.} 66(2013), no.3, 332-371.

\bibitem{m6} A. Malchiodi, D. Ruiz, On the Leray-Schauder degree of the Toda system on compact surfaces. Preprint.

\bibitem{n}  L. Nirenberg, Topics in nonlinear functional analysis.   Courant Institute of Mathematical Sciences, New York University, New York, 1974. viii+259 pp.

\bibitem{nt1} M. Nolasco, G. Tarantello, On a sharp Sobolev-type inequality on two-dimensional compact manifolds.
{\em Arch. Ration. Mech. Anal.} 145 (1998), no. 2, 161-195.

\bibitem{nt2} M. Nolasco, G. Tarantello, Double vortex condensates in the Chern-Simons-Higgs theory.
{\em Calc. Var. Partial Differential Equations} 9 (1999), no. 1, 31-94.

\bibitem{nt3} M. Nolasco, G. Tarantello, Vortex condensates for the $SU(3)$ Chern-Simons theory.
{\em Comm. Math. Phys.} 213 (2000), no. 3, 599-639.



\bibitem{t} M. Troyanov, Metrics of constant curvature on a sphere with two conical singularities, Differential Geometry (Peniscola 1988),
Lecture Notes in Math. 1410, Springer, Berlin, (1989), pp. 296-306.



\bibitem{y1} Y. Yang, The relativistic non-abelian Chern-Simons equation. {\em Commun. Phys.} 186(1), 199-218(1999).

\bibitem{y} Y. Yang, Solitons in Field Theory and Nonlinear Analysis. Springer Monographs in Mathematics. Springer, New York (2001).

\end{thebibliography}
\end{document}